\documentclass[a4paper, 11pt, reqno]{amsart}
\usepackage{amsmath,amssymb,amsthm,amsfonts,mathrsfs,color,graphicx}
\usepackage{setspace}
\usepackage{ulem}
\usepackage[margin=3cm]{geometry}

\usepackage[hidelinks]{hyperref}
\usepackage{cite}
\usepackage{enumerate,latexsym}

\DeclareMathOperator{\rr}{\mathbb R}

\DeclareMathOperator{\dis}{dist}
\DeclareMathOperator{\spt}{spt}
\DeclareMathOperator{\id}{Id}
\DeclareMathOperator{\ze}{{\bf 0}}
\DeclareMathOperator{\ba}{B}
\DeclareMathOperator{\bo}{\partial B}
\DeclareMathOperator{\s}{S}
\DeclareMathOperator{\diver}{div}
\DeclareMathOperator{\hess}{Hess}

%COMANDOS
\newcommand{\R}{\mathbb{R}}

\newcommand{\si}{\Sigma}
\newcommand{\m}{{\bf M}}
\newcommand{\na}{\mathbb{N}}
\newcommand{\rt}{\mathbb{R}^{3}}
\newcommand{\de}{\mathcal{D}_}
\newcommand{\p}{\partial}
\newcommand{\ti}{\widetilde{T}}

\newcommand{\mres}{\mathbin{\vrule height 1.6ex depth 0pt width
0.13ex\vrule height 0.13ex depth 0pt width 1.3ex}}

\newtheorem{theorem}{Theorem}
\newtheorem*{thmA}{Theorem A}
\newtheorem*{thmB}{Theorem B}
\newtheorem*{thmC}{Theorem C}
\newtheorem{lemma}{Lemma}
\newtheorem{proposition}{Proposition}
\newtheorem{remark}{Remark}

\newtheorem{corollary}[theorem]{Corollary}
\newtheorem*{cor}{Corollary}
\theoremstyle{definition}\newtheorem{definition}{Definition}
\newtheorem{claim}{Claim}

\numberwithin{equation}{section}

\bibliographystyle{plain}

\title{A two-piece property for free boundary minimal surfaces in the ball}
\author{Vanderson Lima and Ana Menezes}
\address{Instituto de Matem\'atica e Estat\'istica\\ Universidade Federal do Rio Grande do Sul\\ Brazil}
\email{vanderson.lima@ufrgs.br}

\address{Department of Mathematics\\ Princeton University\\ USA}
\email{amenezes@math.princeton.edu}

\bibliographystyle{plain}

\begin{document}

\begin{abstract}
We prove that every plane passing through the origin divides an embedded compact free boundary minimal surface of the euclidean $3$-ball in exactly two connected surfaces. We also show that if a region in the ball has mean convex boundary and contains a nullhomologous diameter, then this region is a closed halfball. Moreover, we prove the regularity at the corners of currents minimizing a partially free boundary problem by following ideas by Gr\"uter and Simon. Our first result gives evidence to a conjecture by Fraser and Li. 
\end{abstract}

\maketitle

\section{Introduction}

A beautiful theorem by A. Ros \cite{R} states that every equator of the (round) $3$-sphere divides an embedded closed minimal surface in exactly two open connected pieces. An interesting fact is that this result can be seen as a consequence of a (still open) conjecture due to Yau - which says that the first nonzero eigenvalue of the Laplacian of an embedded closed minimal surface in the $3$-sphere is equal to 2 - together with the Courant nodal domain theorem. Hence, Ros's result can be seen as an evidence to the conjecture.

The analogy between the theory of closed minimal surfaces of the $3$-sphere and the theory of compact free boundary minimal surfaces of the unit euclidean 3-ball $\ba^3$ is well-known and has been well explored in many recent works, see for instance \cite{F.S1,F.S3,FraserLi,S,ACS,AN,L}. In this paper, inspired by this analogy, we prove the analog of Ros's result in the context of free boundary minimal surfaces.

\begin{thmA}[The two-piece property]
Every plane in $\rt$ passing through the origin divides an embedded compact free boundary minimal surface of the unit $3$-ball $\ba^3$ in exactly two connected surfaces.
\end{thmA}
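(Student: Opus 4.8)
The plan is to argue by contradiction, combining the harmonicity of the coordinate functions of $\Sigma$ with the rigidity of half-balls among mean convex regions and the corner regularity described above. Fix a unit vector $v$ normal to the plane $P$ and set $u=\langle x,v\rangle|_{\Sigma}$, so that $\Sigma\cap P$ is the zero set of $u$. Since $\Sigma$ is minimal, $u$ is harmonic on $\Sigma$, and since $\Sigma$ meets $\partial B^3$ orthogonally the outward conormal of $\Sigma$ along $\partial\Sigma$ equals the position vector $x$, so $\partial_\nu u=u$ there. Hence $\int_{\partial\Sigma}u=\int_{\Sigma}\Delta u=0$, and either $u\equiv 0$ --- in which case $\Sigma$ is the flat equatorial disk and the statement is vacuous --- or $u$ changes sign, so that $\Sigma\setminus P$ has at least two components. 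The whole content of the theorem is therefore to exclude the existence of three or more components.

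So assume $\Sigma\setminus P$ has components $\Sigma_{1},\dots,\Sigma_{k}$ with $k\ge 3$. Write $D=P\cap\overline{B^3}$ and let $B^{+},B^{-}$ be the two closed half-balls determined by $P$, with $u\ge 0$ on $B^{+}$. Since $H_{2}(\overline{B^3},\partial B^3;\mathbb{Z}_{2})=0$, the connected surface $\Sigma$ separates $B^3$ into two regions $W$ and $W'$. I would then study the subdivision of $B^3$ cut out by $\Sigma\cup D$: each complementary region lies in exactly one of $B^{\pm}$ and in exactly one of $W,W'$, its boundary is a union of faces lying in $\Sigma$, in $D$, and in $S^2=\partial B^3$, and every edge is convex, i.e.\ has interior dihedral angle at most $\pi$ --- along $\partial\Sigma$ because $\Sigma\perp S^2$ (free boundary), along $P\cap S^2$ because $P$ passes through the origin so that $D\perp S^2$, and along $\Sigma\cap P$ because two transversal sheets subdivide a neighbourhood into four wedges each of angle $<\pi$. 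The aim of this combinatorial step is to exploit $k\ge 3$ to exhibit one complementary region $N$ that is contained in a single half-ball, say $N\subset B^{+}$, whose boundary contains a nonempty union of components of $\Sigma\setminus P$ --- so in particular $N\ne B^{+}$ --- and which contains a diameter of $B^3$ that is nullhomologous in $N$. For such an $N$ the boundary $\partial N$ is mean convex (minimal $\Sigma$-faces, totally geodesic $D$-faces, $S^2$-faces convex with respect to $N$, convex edges), yet it genuinely contains a non-planar minimal surface.

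Granting this, the characterization of closed half-balls among regions with mean convex boundary containing a nullhomologous diameter --- our second main result --- forces $N$ to be a closed half-ball, whose boundary is the union of a flat totally geodesic disk and a round spherical cap; this contradicts the presence of a non-planar minimal face in $\partial N$, and the theorem follows. That rigidity statement is where the corner regularity \`a la Gr\"uter and Simon enters: because $\partial N$ has edges, the natural competitors must be produced as area-minimizing currents for a partially free boundary problem, with prescribed data on $P$ and free boundary on $\partial B^3$, and one needs their regularity up to and including the edge $P\cap S^2$ before the comparison and maximum-principle arguments underlying the half-ball rigidity can be run. I expect the main obstacle to be the extraction of the region $N$ together with a nullhomologous diameter inside it: this requires keeping careful track of how the nodal domains of $u$ are stacked on the two sides of $P$ and of $\Sigma$, and it is exactly here that the hypothesis $k\ge 3$, as opposed to $k=2$, must be used in an essential way.
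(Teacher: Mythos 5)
Your opening reduction (harmonicity of $u=\langle x,v\rangle$, $\int_{\partial\Sigma}u\,ds=\int_\Sigma\Delta u\,d\Sigma=0$, hence $u$ changes sign unless $\Sigma$ is an equatorial disk) is fine, but the heart of your argument --- the ``combinatorial step'' extracting a region $N$ from the decomposition of $\ba^3$ by $\Sigma\cup D$ --- is both unexecuted and, as set up, unworkable. Any diameter of $\ba^3$ not contained in the plane $P$ passes through the origin and therefore meets both open half-balls; so a diameter contained in your region $N\subset \ba^{+}$ must lie entirely in $D\cap\partial N$, i.e.\ in the closure of a component of $D\setminus\Sigma$. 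Nothing in the hypothesis $k\ge 3$ guarantees that some component of $D\setminus\Sigma$ contains an entire diameter: generically $\Sigma\cap D$ is a collection of curves crossing $D$ and every straight segment through the origin hits one of them. In addition, Theorem B is proved for regions whose boundary is \emph{smooth} and meets $\s^2$ orthogonally, whereas your $N$ has edges along $\Sigma\cap D$, $\partial\Sigma$ and $\partial D$ (and the edge along $\Sigma\cap D$ need not even be a transversal crossing --- there may be $n$-prong tangencies); applying the half-ball rigidity to $N$ would require redoing its proof (the sweep-out by rotated half-disks and the tangency analysis at corners) for piecewise-smooth, convex-edged regions, which is a substantial extension you cannot simply cite.

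The paper's proof avoids both problems by never leaving the smooth-boundary regions $W,W'$, the closures of the two components of $\ba^3\setminus\Sigma$ (mean convex because $\Sigma$ is minimal). One fixes a component $M_1$ of $\Sigma\cap\ba^{+}$, takes $\gamma_I=\partial M_1\cap D$ as fixed boundary data, and minimizes area in $W$ (and separately in $W'$) among currents with free boundary on $\s^2$. Stability of the minimizer together with the Jacobi function $\langle x,v\rangle$ (Lemma \ref{lem2}) forces each component of the minimizer to be totally geodesic; Theorem B enters only to exclude the case that such a component is bounded by a diameter, which would make $W$ a half-ball and $\Sigma$ a disk. The two minimizers then glue to the whole disk $D$, so $\Sigma\cap D=\gamma_I$ and the remaining components of $\Sigma\cap\ba^{+}$ have empty fixed boundary, contradicting Proposition \ref{prop-simple}(i). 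The lesson is that the minimality of $\Sigma$ must be used beyond the mean convexity of the complementary regions --- in the paper it enters through the stability of an auxiliary minimizer --- and your proposal has no substitute for that variational mechanism.
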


To prove this theorem we need the following result which is also the analog of another result by Ros in \cite{R}.

\begin{thmB}
Let $W\subset \ba^3$ be a connected closed region with mean convex boundary such that $\partial W$ meets $\s^2$ orthogonally along its boundary and $\partial W$ is smooth. Suppose $W$ contains a straight line segment joining two antipodal points of $\s^2$, which is nullhomologous in $W$ (see Definition \ref{def-nul}). Then $W$ is a closed halfball.
\end{thmB}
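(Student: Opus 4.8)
The plan is to follow the strategy of the corresponding result of Ros \cite{R}, the new ingredient being the corner regularity for the partially free boundary Plateau problem established in this paper. First, since $L$ is nullhomologous in $W$, the class of integral $2$-currents $T$ in $W$ with $\partial T-L$ supported in $\s^2$ is nonempty in the relevant relative homology class, and I would minimize mass over it. A minimizer exists by compactness and is a genuine (nonzero) surface, since $\partial T$ must equal $L$ on $\mathrm{int}(\ba^3)$; by interior regularity, the free boundary regularity of Gr\"uter--Simon, and the corner regularity proved here, it is supported on a compact embedded surface $\Sigma\subset W$ which is minimal, meets $\s^2$ orthogonally along $\partial\Sigma\setminus L$, and is smooth up to the corner points $\partial L\subset\s^2$. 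Necessarily $\partial\Sigma=L\cup\beta$ with $\beta\subset\s^2$ an arc joining the endpoints of $L$, and $\Sigma$ is area-minimizing in $W$ in its relative homology class.

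Next, since $\partial W\cap\mathrm{int}(\ba^3)$ is mean convex and meets $\s^2$ orthogonally, as does $\Sigma$, the strong maximum principle in its interior, boundary (along $\s^2$) and corner forms shows that either $\Sigma\subset\partial W$ --- in which case unique continuation makes an entire component of $\partial W\cap\mathrm{int}(\ba^3)$ minimal with free boundary and the remaining steps apply to it --- or $\mathrm{int}(\Sigma)\subset\mathrm{int}(W)$, with $\Sigma$ meeting $\partial W$ at most along $L$. Now consider the book foliation of $\ba^3$ by the flat half-disks $\{D_\theta\}$, $\theta\in[0,2\pi)$, with common spine the line $\ell\supset L$: each $D_\theta$ is minimal, meets $\s^2$ orthogonally along its spherical edge $\beta_\theta$, and the open half-disks $\mathrm{int}(D_\theta)$ foliate $\mathrm{int}(\ba^3)\setminus\ell$. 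Comparing $\Sigma$ with this family, the page that last touches $\Sigma$ away from $L$ must, by the strong maximum principle (interior, along $\s^2$, or at the corners $\partial L$), coincide with $\Sigma$ near the contact point; unique continuation, connectedness of $\Sigma$, and $L\subset\partial\Sigma$ then force $\Sigma=D_{\theta_0}$ for some $\theta_0$ (that $\Sigma$ cannot wrap around $\ell$ follows from embeddedness together with $L\subset\partial\Sigma$, or directly from area-minimization). Hence $\Sigma$ is a totally geodesic half-disk with diameter $L$ whose plane contains the origin.

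It remains to pass from $\Sigma$ to $W$, again as in \cite{R}: using that $D_{\theta_0}$ is area-minimizing with $\mathrm{int}(D_{\theta_0})\subset\mathrm{int}(W)$, one sweeps $W$ by the half-disks $D_\theta$ and analyses the set of $\theta$ with $D_\theta\subset W$; a further maximum-principle comparison of $\partial W$ with flat disks, using orthogonality along $\s^2$, then shows that $\partial W\cap\mathrm{int}(\ba^3)$ is a single flat equatorial disk, so that $W$ is the closed half-ball it bounds (the degenerate case $W=\overline{\ba^3}$ being excluded by the hypotheses). I expect the main obstacle to be that the comparison surface $\Sigma$, the pages $D_\theta$ and the sphere $\s^2$ all meet along the corner set $\partial L$, so that the minimization of the first step and all the maximum-principle and sweepout comparisons must be carried out right up to these corners; the interior and free boundary maximum principles do not by themselves control the behaviour there, and filling this gap is precisely the role of the regularity theorem for area-minimizing currents at the corners of a partially free boundary problem, proved here following Gr\"uter and Simon.
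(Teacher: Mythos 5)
Your overall architecture matches the paper's: minimize mass among integral $2$-currents in $W$ with fixed boundary the diameter $\alpha$ and free boundary on $\s^2$, invoke the interior, free-boundary and corner regularity to get an embedded partially free boundary minimal surface $\Sigma\subset W$, show $\Sigma$ is a flat half-disk, and finally rotate that half-disk about $\alpha$ inside $W$ and use the interior, free-boundary and Serrin-type corner maximum principles against the mean convex $\partial W$. The genuine divergence is the middle step. The paper first proves that $\Sigma$ is \emph{stable} --- which is delicate precisely when $\Sigma\subset\partial W$, where only one-sided variations are admissible; Claim \ref{claim1} resolves this by using the signed first eigenfunction of the stability operator as a one-sided test function --- and then applies Lemma \ref{lem2}: the Jacobi field $f=\langle x,v\rangle$, with $v$ normal to an equatorial plane containing $\alpha$, vanishes on the fixed boundary and satisfies $\partial f/\partial\nu=f$ on the free boundary, so stability forces $\int_\Sigma|A_\Sigma|^2f^2\,d\Sigma=0$ and $\Sigma$ is totally geodesic. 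You replace this by a rotating-planes argument with the pencil of half-disks $D_\theta$ through the line $\ell$, which would avoid the stability discussion altogether if it worked.

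As written, however, your sweepout has two unresolved points. First, every page $D_\theta$ contains $L$ in its boundary and $L\subset\partial\Sigma$, so the extremal page may touch $\Sigma$ \emph{only} along $L$: the angle function on $\Sigma$ can attain its extremum at a point of $L$ and nowhere in $\mathrm{int}(\Sigma)\cup(\partial\Sigma\setminus L)$, and there neither the interior nor the free-boundary maximum principle applies; one must, e.g., Schwarz-reflect $\Sigma$ across the straight segment $L$ to convert the contact into an interior tangency (noting that the $180^{\circ}$ rotation about $\ell$ preserves the half-spaces bounded by the page's plane), with a separate corner argument at $\partial L$. Second, the wrap-around case (angle function onto all of $S^1$) is not excluded by ``embeddedness together with $L\subset\partial\Sigma$'': embeddedness controls the single sheet of $\Sigma$ attached to $L$ but not other sheets of $\Sigma$ passing near $\ell$, and the appeal to area-minimization is asserted rather than proved. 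Finally, your handling of the case $\Sigma\subset\partial W$ is incorrect: $\partial W$ is only smooth and mean convex, so minimality of the open piece of $\partial W$ coinciding with $\mathrm{int}(\Sigma)$ does not propagate by unique continuation to a whole component of $\partial W$ (a smooth, non-analytic surface can be minimal on an open set and strictly mean convex elsewhere). This last defect is harmless for your route --- the comparison of $\Sigma$ with the pages $D_\theta$ does not require $D_\theta\subset W$, so the sweepout argument, once repaired, treats both cases of the dichotomy uniformly --- but the justification you give should be deleted rather than fixed.
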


We say that a surface $\si \subset \ba^3$ {\it links} a curve $\Gamma$, if $\si$ does not meet $\Gamma$ and it is homotopically non-trivial (relative to $\bo^3$) in $\ba^3\setminus \Gamma$ (see Figure \ref{fig-link}). An interesting consequence of Theorem B is the following corollary which is the analog of a result in $\s^3$ due to Solomon \cite{Sol}.

\begin{figure}[!h]
\includegraphics[scale=0.6]{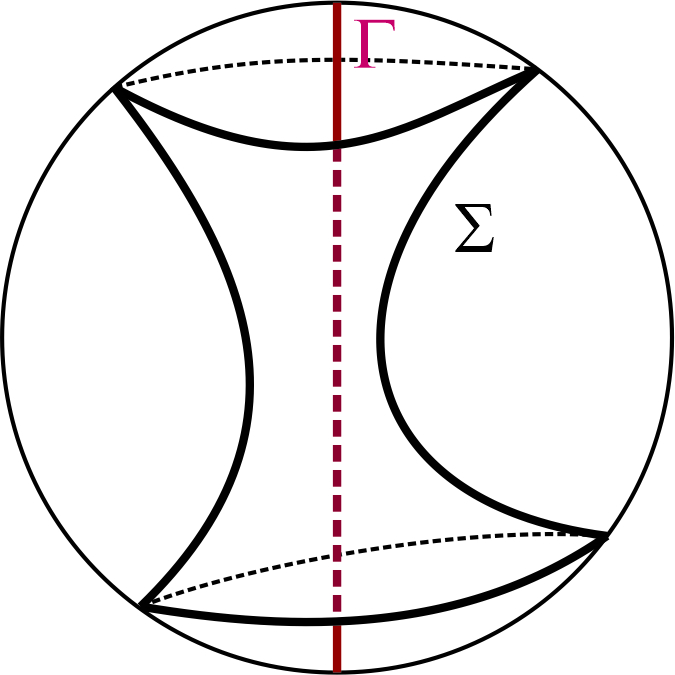}
\caption{The surface $\Sigma$ links the curve $\Gamma$.}
\label{fig-link}
\end{figure}

\begin{cor}
Every embedded compact free boundary minimal surface of $\ba^3$ either meets or links each straight line passing through the origin.
\end{cor}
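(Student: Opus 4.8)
The plan is to argue by contradiction and to feed a complementary region of $\Sigma$ into Theorem~B. Let $\Sigma\subset\ba^3$ be an embedded compact free boundary minimal surface, let $\ell$ be a straight line through the origin, and set $\Gamma:=\ell\cap\ba^3$, a diameter joining two antipodal points $p,q\in\s^2$; note that $\Sigma\cap\ell=\Sigma\cap\Gamma$ because $\Sigma\subset\ba^3$. Suppose, for contradiction, that $\Sigma$ neither meets nor links $\Gamma$.

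First I would locate the region to which Theorem~B applies. Since $\ba^3$ is contractible, $H_2(\ba^3,\s^2;\mathbb{Z}_2)\cong H^1(\ba^3;\mathbb{Z}_2)=0$ by Lefschetz duality, so every compact surface properly embedded in $\ba^3$ with boundary on $\s^2$ separates $\ba^3$; in particular $\Sigma$ does. As $\Sigma\cap\Gamma=\emptyset$, the diameter $\Gamma$ is contained in the closure $W$ of one connected component of $\ba^3\setminus\Sigma$. The part of $\partial W$ lying in the interior of $\ba^3$ is a union of connected components of $\Sigma$, hence it is smooth, minimal (in particular mean convex), and meets $\s^2$ orthogonally because $\Sigma$ has free boundary. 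Next, unwinding the definition of ``links'' together with Definition~\ref{def-nul}, I would show that the hypothesis that $\Sigma$ does not link $\Gamma$ amounts precisely to the statement that the diameter $\Gamma\subset W$ is nullhomologous in $W$.

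Now Theorem~B applies to $W$ and shows that $W$ is a closed halfball, say $W=\{x\in\ba^3:\langle x,v\rangle\ge 0\}$ for some unit vector $v$. Then the interior part of $\partial W$ — which is contained in $\Sigma$ — is the open flat equatorial disk $\{x\in\ba^3:\langle x,v\rangle=0\}$, and this disk contains the origin. On the other hand, writing $\Gamma=\{tw:t\in[-1,1]\}$ with $|w|=1$, the inclusion $\Gamma\subset W$ gives $\langle w,v\rangle\ge 0$ and $\langle -w,v\rangle\ge 0$, hence $\langle w,v\rangle=0$, so $\Gamma$ lies in that equatorial disk; in particular $0\in\Sigma\cap\Gamma$. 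This contradicts the assumption that $\Sigma$ does not meet $\ell$, and therefore $\Sigma$ meets or links every line through the origin.

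I expect the only delicate point to be the identification made at the end of the second step: translating the homotopy-theoretic condition ``$\Sigma$ does not link $\Gamma$'' into the homological hypothesis of Theorem~B, which requires keeping track of which of the two complementary regions is produced, dealing with the corner of $\partial W$ along $\partial\Sigma$, and (in the non-orientable or disconnected cases) working with $\ba^3$ cut open along $\Sigma$ rather than with $\Sigma$ itself. The remaining steps — separation, verifying the hypotheses of Theorem~B, and the concluding computation with the halfball — are straightforward.
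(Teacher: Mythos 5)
Your overall strategy is the same as the paper's: argue by contradiction, pass to the closure $W$ of the component of $\ba^3\setminus\Sigma$ containing the diameter, apply Theorem~B to conclude that $W$ is a halfball, and derive the contradiction from the fact that a diameter contained in a closed halfball must lie in its flat face. The separation argument and the final computation with $\langle w,v\rangle$ are fine (the latter is spelled out more carefully than in the paper). The problem is the middle step, which you state as ``I would show that the hypothesis that $\Sigma$ does not link $\Gamma$ amounts precisely to the statement that $\Gamma$ is nullhomologous in $W$'' and then flag as the only delicate point without carrying it out. That implication is not a formality; it is essentially the entire technical content of the paper's proof of this corollary, and as written your argument has a genuine gap there.

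Concretely, ``$\Gamma$ nullhomologous in $W$'' demands a compact surface \emph{contained in} $W$ with boundary $\Gamma\cup\gamma$, $\gamma\subset\s^2$, whereas ``$\Sigma$ does not link $\Gamma$'' is a homotopical statement about $\Sigma$ inside all of $\ba^3\setminus\Gamma$; a bounding surface for $\Gamma$ produced naively in $\ba^3$ may cross $\Sigma$ and leave $W$. The paper bridges this as follows: from the homotopical triviality of $\Sigma$ in $\ba^3\setminus\Gamma$ one deduces that the curves of $\partial\Sigma\cap\s^2$ are nullhomotopic in $\s^2\setminus\{p,q\}$, so there is an arc $\gamma\subset W\cap\s^2$ joining $p$ to $q$; one spans $\Gamma\cup\gamma$ by a topological disk $V\subset\ba^3$, makes $V$ transverse to a one-sided push-off of $\Sigma$ into $W$ at distance $\epsilon$, and replaces the finitely many subdisks of $V$ cut out by the intersection with that push-off by the corresponding pieces of the push-off itself; the resulting surface lies in $W$ and exhibits $\Gamma$ as nullhomologous there. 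Some such cut-and-paste construction (or an equivalent homological argument in the cut-open ball, which you allude to but do not perform) is required before Theorem~B can be invoked.
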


In the proof of both Theorem A and Theorem B, we need the existence and regularity of a minimizer for a partially free boundary problem. Namely, let  $W \subset \rt$ be a compact domain such that $\partial W = S\cup M$, where $S$ is a compact $C^2$ surface (not necessarily connected) with boundary, $M$ is a smooth, compact mean convex surface with boundary, which intersects $S$ ortogonally along $\partial S$, and $\mathring{S}\cap\mathring M = \emptyset$ (here $\mathring A$ denotes the topological interior of $A$). Let $\gamma$ be a compact curve which is contained in $W$ and such that $\gamma\cap S$ is either empty or consists of a finite number of points (the corners); and consider the class of surfaces in  $W$ whose boundary minus $\gamma$ is contained in $S$. We look for a surface $\si$ which has least area among all such surfaces. The existence of such surface $\si$ follows from general compactness results about currents, and the regularity of $\si$ away from the corners can be proved using results of \cite{Gr2,HarSim,Ed} (see Section \ref{reg1}, Theorem \ref{thm.reg1}). It was reported in \cite{Gr3} that for a problem similar to this one, the regularity at the corners  would be settled in a work of Gr\"uter and Simon (unpublished). In Section \ref{sec-current} we give the details of this proof in the case where $\gamma$ intersects $S$ orthogonally by following the ideas contained in \cite{Gr3}. In particular, we prove the following regularity result.

\begin{thmC}
Suppose $\gamma$ intersects $S$ orthogonally and is $C^2$ except possibly at a finite number of points. Then the minimizer for the partially free boundary problem described above is a connected oriented embedded minimal surface which meets $S$ orthogonally and is $C^{1,\alpha}$, $0<\alpha<1,$ in a neighborhood of each corner and is $C^2$ away from the corners and the possible isolated singularities of $\gamma.$
\end{thmC}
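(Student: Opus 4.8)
The plan is to treat three groups of assertions separately: (i) existence of the minimizer together with its minimality, embeddedness, orientability and connectedness; (ii) its $C^{2}$ regularity and orthogonal contact with $S$ away from the corners and from the isolated singular points of $\gamma$; and (iii) the $C^{1,\alpha}$ regularity and orthogonality at a corner $p\in\gamma\cap S$. Items (i) and (ii) are essentially Theorem~\ref{thm.reg1}: by the compactness and lower-semicontinuity theorems for integral currents there is an area-minimizer $T$ in the admissible class, and the interior and free-boundary regularity results of \cite{Gr2,HarSim,Ed} show that its support $\Sigma$ is, off the corners and the singularities of $\gamma$, a $C^{2}$ embedded minimal surface which is $C^{2}$ up to $\gamma$ and which is $C^{2}$ up to $S$ and meets $S$ orthogonally there. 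Multiplicity one (hence embeddedness) and two-sidedness (hence orientability) are automatic for area-minimizing codimension-one currents, and connectedness follows because any component of $\Sigma$ disjoint from $\gamma$ would either be closed or have boundary contained in $S$ and be nullhomologous in $W$ relative to $S$, hence could be discarded without leaving the admissible class while strictly decreasing area. So only item (iii) remains.

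Fix a corner $p\in\gamma\cap S$. The idea, following \cite{Gr3}, is to reflect across $S$ so that the mixed Dirichlet (on $\gamma$) and free (on $S$) boundary conditions become a single Dirichlet condition at an interior point. Since $S$ is $C^{2}$ and $\gamma$ meets $S$ orthogonally, the geodesic reflection $\rho$ across $S$ is a $C^{1}$ diffeomorphism of a neighborhood $U$ of $p$, the reflected metric $\tilde g$ on $U$ is continuous and $C^{1,1}$ off $S$, and $\Gamma:=(\gamma\cup\rho(\gamma))\cap U$ is a $C^{1,1}$ arc through $p$ whose tangent line there is the normal line $L$ to $S$ at $p$. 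The doubled current $\widetilde T:=(T+\rho_{\#}T)\mres U$ is almost-minimizing for $\tilde g$, is invariant under $\rho$, and has boundary $\Gamma$ with multiplicity one near $p$ — the last because $T$ and $\rho_{\#}T$ have multiplicity one and overlap only on $S$, which has $\mathcal H^{2}$-measure zero for $T$ since $\Sigma$ meets $S$ transversally away from $p$. For such almost-minimizers with $C^{1,1}$ boundary curve in an ambient metric close to Euclidean one has a monotonicity formula for the mass ratio centered at $p$, so tangent cones at $p$ exist, and each is an area-minimizing $2$-cone $C$ in $\rt$ whose boundary is $L$ taken with some multiplicity $m\ge1$ and which is invariant under reflection across $T_{p}S$.

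Next I would classify $C$: by interior regularity it is a smooth minimal cone away from $L$, hence (being dilation-invariant) flat there, so it is a union of half-planes bounded by $L$; a rounding-off competitor near $L$, together with the bound on the number of sheets of $\widetilde T$ through $p$ coming from embeddedness of $T$, shows $C$ is a single half-plane bounded by $L$, and the multiplicity-one property of $\widetilde T$ forces $m=1$. Thus $\Theta(\widetilde T,p)=\tfrac12$ and $p$ is a flat boundary point. Invoking the boundary-regularity theorem of Hardt--Simon \cite{HarSim}, in the version adapted by Gr\"uter and Simon (see \cite{Gr3}) to a $C^{1,1}$ boundary curve and a merely continuous (Lipschitz, and $C^{1,1}$ off $S$) ambient metric, one concludes that $\widetilde T$ is, in a neighborhood of $p$, a $C^{1,\alpha}$ embedded manifold-with-boundary for every $0<\alpha<1$. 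Restricting to the $W$-side, $\Sigma$ is $C^{1,\alpha}$ up to $p$, and since $\Sigma\cup\rho(\Sigma)$ is a $\rho$-invariant $C^{1,\alpha}$ surface it meets $S$ orthogonally at $p$ as well. Combined with (i) and (ii), this proves the theorem.

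The main obstacle is this last step: making rigorous, along the lines of the unpublished Gr\"uter--Simon argument indicated in \cite{Gr3}, the boundary regularity at $p$ in precisely this setting — establishing the monotonicity formula and an Allard-type $\varepsilon$-regularity estimate near a flat boundary point, obtaining uniqueness of the tangent cone, and ruling out tangent cones that are several half-planes or carry higher multiplicity — all in the presence of the corner and of the reduced ($C^{0}$ across $S$) regularity of the reflected ambient metric. Everything else is a combination of standard compactness, the cited interior and free-boundary regularity, and the reflection reduction.
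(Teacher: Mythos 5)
Your outline for item (iii) — reflect across $S$, observe that $\gamma\cup\Phi(\gamma)$ becomes a $C^{1,1}$ curve because of the orthogonal intersection, extract an area-minimizing tangent cone at the corner, classify it, and invoke boundary regularity — is the same strategy the paper follows. The problem is that everything you label ``the main obstacle'' and leave undone is precisely the content of the theorem: the monotonicity of the mass ratio at the corner (Lemma \ref{dens}, proved via the almost-minimizing inequality \eqref{ineq.mass} and a cone-plus-strip comparison), the existence and minimality of the tangent cone with the density identity (Theorem \ref{cone.exist}), and the passage from ``flat tangent cone'' to $C^{1,\alpha}$ regularity. A proposal that defers these steps has not proved the statement.

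Beyond the deferral, two of your intended substitutes would not work as stated. First, your classification of the tangent cone by ``a rounding-off competitor near $L$ together with a bound on the number of sheets'' is not how the paper proceeds and does not obviously close: the doubled current is only almost-minimizing, and a priori its tangent cone can contain, in addition to the multiplicity-one half-plane bounded by $L$, extra multiplicity-one \emph{planes} (full sheets passing through the corner tangent to $M$), which no competitor or multiplicity argument excludes. The paper instead uses Edelen's decomposition into nested sets $E_j$ plus the boundary piece, applies Duzaar--Steffan \cite{DS} to each almost-minimizing component to get $C^{1,\alpha}$ regularity separately, and then eliminates the extra sheets $\partial E_{j\neq j_*}$ by a Hopf-lemma comparison over a Dini domain with the \emph{mean convex} surface $M$ (Theorem \ref{thm.reg2}); the mean convexity of $M$ is an essential hypothesis that your argument never uses. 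Second, you propose to reflect the \emph{metric} and then apply ``a Gr\"uter--Simon version of Hardt--Simon for a merely continuous ambient metric,'' which is an unproved black box (and is the very thing \cite{Gr3} reports as unpublished). The paper avoids this by reflecting the current while keeping the Euclidean metric, paying only a multiplicative error $(1+cr)$ in the mass (Proposition \ref{edelen} and \eqref{ineq.mass}), so that the existing almost-minimizer regularity theory of \cite{Ed} and \cite{DS} applies directly.
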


The study of free boundary minimal surfaces (in euclidean domains) has attracted significant attention for several decades (see for instance classical works as \cite{Cou,HN} or more recent results as \cite{ACS,ACS2,AN,CFP,D,FGM,Li,LZ,L,MNS,Mc,R2,S,SZ,T,Y} and references therein). Recently there was an increase in interest for free boundary minimal surfaces in the unit euclidean 3-ball $\ba^3$ due to the work by Fraser and Schoen \cite{F.S1} (see also \cite{F.S2,F.S3}) where they made a connection between these objects and the Steklov eigenvalue problem. In analogy to Yau's conjecture mentioned above, Fraser and Li \cite{FraserLi} conjectured that the first nonzero Steklov eigenvalue of an embedded compact free boundary minimal surface in $\ba^3$ is equal to 1. This conjecture together with the Courant nodal domain theorem for the Steklov problem (stated for instance in \cite{GP}, Section 6) implies the two-piece property for free boundary minimal surfaces in $\ba^3$ (see Remark \ref{rem-nodal}). Hence, our result in Theorem A can be seen as an evidence to the conjecture by Fraser and Li.

In the last few years there have been many important studies about free boundary minimal surfaces. Ambrozio, Carlotto and Sharp \cite{ACS2} established compactness theorems for free boundary minimal hypersurfaces. Maximo, Nunes and Smith \cite{MNS} proved the existence of free boundary minimal annuli through a degree argument. Li and Zhou \cite{LZ} 
developed a min-max theory for free boundary minimal hypersurfaces.

We should mention that the class of free boundary minimal surfaces in $\ba^3$ is very rich. In fact, many techniques have been developed to construct new examples of free boundary minimal surfaces in $\ba^3$. For instance, Fraser and Schoen \cite{F.S1,F.S3} constructed examples with genus $0$ and any number of boundary components. Using gluing methods, Folha, Pacard and Zolotareva \cite{F.P.Z} constructed examples with genus $1$ and any large number of boundary components, and also obtained examples of genus $0$ and large number of boundary components displaying similar asymptotic behavior to Fraser-Schoen family. Examples with large genus and $3$ boundary components were constructed by Ketover \cite{K}, where he also obtained examples with the symmetry group of the Platonic solids, both using min-max methods. Kapouleas and Li \cite{K.L} also produced examples with large genus and $3$ boundary components, and examples with dihedral symmetry. Using gluing methods, Kapouleas and Wiygul \cite{K.W} constructed examples with one boundary component and large genus, converging to an equatorial disk with multiplicity 3, as the genus goes to infinity. More recently, Carlotto, Franz and Schulz \cite{CFS} applied min-max methods to prove the existence of embedded free boundary minimal surfaces with connected boundary and arbitrary genus.

This paper is organized as follows. In the second section we will prove both Theorem A and Theorem B; in Section \ref{sec-current} we will present the proof of the regularity at the corners of a minimizer for the partially free boundary problem mentioned above; and in Appendix A we will show an application of Serrin's Maximum Principle (Lemma 2, \cite{Ser}) at a corner. \\

\noindent
{\bf Acknowledgements}: The first author would like to thank Princeton University for the hospitality where part of the research and preparation of this article were conducted. The authors would like to thank the anonymous referee for valuable suggestions.

\section{The two-piece property and other results}
\label{sec.main}
Throughout the paper we say that a curve in $\ba^3$ is a {\it diameter} if it is a straight line segment joining two antipodal points of $\s^2=\bo^3;$ and we will define an {\it equatorial disk} as the intersection of $\ba^3$ with a plane passing through the origin.

Given a surface $\Sigma$ in $\ba^3$, we will write its boundary as $\p\Sigma=\gamma_I\cup\gamma_S$ where $int (\gamma_I)\subset int (\ba^3)$ and $\gamma_S\subset \s^2.$

\begin{definition}
Let $\Sigma$ be a compact surface properly immersed in $\ba^3.$ We say that $\Sigma$ is a {\it minimal surface with free boundary} if the mean curvature vector of $\Sigma$ vanishes and $\Sigma$ meets $\s^2$ orthogonally along $\p \Sigma$ (in particular, $\gamma_I=\emptyset).$ We say that $\Sigma$ is a {\it minimal surface with partially free boundary} if the mean curvature vector of $\Sigma$ vanishes and its boundary $\p \Sigma=\gamma_I\cup\gamma_S$ satisfies that $\Sigma$ meets $\s^2$ orthogonally along $\gamma_S$ and $\gamma_I\neq\emptyset$.\end{definition}

From now on, given a (partially) free boundary minimal surface $\Sigma\subset \ba^3$ with boundary $\p \Sigma=\gamma_I\cup \gamma_S$, we will call $\gamma_I$ its fixed boundary and $\gamma_S$ its free boundary.

\begin{lemma}\label{lem.n-prong}
Let $\Sigma_1$ and $\Sigma_2$ be two distinct (partially) free boundary minimal surfaces with boundary $\partial\Sigma_i= \gamma_{I}^i\cup\gamma_{\s}^i$ that are tangent at a point $p\in\Sigma_1\cap\Sigma_2.$ Then

\begin{enumerate}
\item if $p\in \mbox{int}(\Sigma_1)\cap \mbox{int}(\Sigma_2)$, there exists a neighborhood of $p$ where the intersection $\Sigma_1\cap\Sigma_2$ is given by $2l$ curves, $l\geq 2$, starting at $p$ and making equal angle. See Figure \ref{n-prong}(a);

\item if $p\in int(\gamma_{\s}^1)\cap int(\gamma_{\s}^2),$ there exists a neighborhood of $p$ where the intersection $\Sigma_1\cap \Sigma_2$ is given by $k$ curves, $k\geq1$, starting at $p$. See Figures \ref{n-prong}(b), \ref{n-prong}(c), \ref{n-prong}(d).
\end{enumerate}
In both cases, $p$ is called an $n$-prong singularity.
\end{lemma}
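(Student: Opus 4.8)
The plan is to treat both cases with one device: represent $\Sigma_1$ and $\Sigma_2$ near $p$ as graphs over their common tangent plane $T_p\Sigma_1=T_p\Sigma_2$, subtract the two minimal surface equations to obtain a linear, second order, uniformly elliptic equation for the difference $w$ of the graph functions, and then read the local structure of $\Sigma_1\cap\Sigma_2=\{w=0\}$ off the classical planar description of nodal sets of such equations --- the Hartman--Wintner expansion in the interior case, Bers' similarity principle in the boundary case.

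\emph{Case (1).} Choose Euclidean coordinates with $p=0$ and $T_p\Sigma_i=\{x_3=0\}$ and write $\Sigma_i=\mathrm{graph}(u_i)$ on a disk $D_\rho\subset\{x_3=0\}$, so $u_i(0)=0$ and $\nabla u_i(0)=0$. If $w:=u_1-u_2$ vanished on an open set, the two connected real analytic surfaces would coincide, contradicting $\Sigma_1\ne\Sigma_2$; so $w\not\equiv0$, and by unique continuation $w$ has finite vanishing order at $0$. Subtracting the minimal surface equations and applying the fundamental theorem of calculus in the gradient variable gives $\mathrm{div}(A\nabla w)=0$ with $A$ smooth, symmetric and uniformly elliptic on a smaller disk, and $A(0)=\mathrm{Id}$ since $\nabla u_i(0)=0$. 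By the Hartman--Wintner expansion, $w=P_k+o(|x|^k)$ near $0$ with $P_k\not\equiv0$ a homogeneous polynomial of degree $k$ harmonic for $A^{ij}(0)\partial_{ij}=\Delta$; tangency forces $\nabla w(0)=0$, hence $k\ge2$, and we set $l:=k$. In two variables $P_l=c\,r^l\cos(l\theta-\theta_0)$, whose zero set is $2l$ rays through $0$ with equal consecutive angles $\pi/l$; correspondingly the zero set of $w$ near $0$ is $2l$ embedded $C^1$ arcs issuing from $0$ tangent to these rays, hence making equal angles. This is Figure \ref{n-prong}(a).

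\emph{Case (2).} Now $p\in\gamma_{\s}^1\cap\gamma_{\s}^2\subset\s^2$ and both surfaces meet $\s^2$ orthogonally there. I would pass to Fermi coordinates $(\xi,\eta,t)$ for $\s^2$ near $p$: $p=0$, locally $\ba^3=\{t\ge0\}$ and $\s^2=\{t=0\}$, metric $dt^2+g_{ab}(\xi,\eta,t)\,d\xi^ad\xi^b$ with $g_{ab}(0)=\delta_{ab}$, and the $\xi$-axis along the common tangent to $\gamma_{\s}^1$ and $\gamma_{\s}^2$ at $p$ (common, since $T_p\Sigma_1=T_p\Sigma_2$). Then $T_p\Sigma_i=\{\eta=0\}$ --- it contains the normal $\partial_t|_0$ of $\s^2$ by orthogonality --- so $\Sigma_i=\{\eta=u_i(\xi,t)\}$ on a half-disk $\{t\ge0\}$ with $u_i(0)=0$ and $\nabla u_i(0)=0$; minimality is a quasilinear uniformly elliptic equation for $u_i$, and, the metric being block diagonal in $t$, orthogonality of $\Sigma_i$ to $\s^2$ linearizes exactly to the Neumann condition $\partial_t u_i|_{t=0}=0$. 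Reflecting evenly, $\hat u_i(\xi,t):=u_i(\xi,|t|)$ is $C^1$ across $\{t=0\}$, solves classically on each side a smooth minimal surface equation (of a metric smooth up to $\{t=0\}$ from that side), and --- by the Neumann condition --- has vanishing conormal flux along $\{t=0\}$; hence $w:=\hat u_1-\hat u_2$ is a nontrivial weak solution, even in $t$, of a divergence-form uniformly elliptic equation with bounded measurable coefficients on a full disk around $0$. By Bers' theorem together with planar unique continuation, $w$ has finite vanishing order at $0$ with nonzero leading homogeneous behaviour, so its zero set near $0$ is a finite union of arcs issuing from $0$; boundary unique continuation (the Neumann condition) excludes an arc lying in $\{t=0\}$, so by the $t$-symmetry these arcs occur in mirror pairs, and restricting to $\{t\ge0\}$ shows $\Sigma_1\cap\Sigma_2$ near $p$ is $k\ge1$ arcs issuing from $p$. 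These are the configurations in Figures \ref{n-prong}(b)--(d).

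The main obstacle is the boundary case: one must check that orthogonality to $\s^2$ really becomes a Neumann condition --- which is why the coordinates must be adapted to $\s^2$ rather than arbitrary --- and that after the even reflection the difference of the graph functions genuinely solves a uniformly elliptic divergence-form equation across the reflection locus, so that the planar nodal-set theory applies at points of $\s^2$ as well. The interior case is then the classical analysis of $n$-prong singularities of minimal surfaces.
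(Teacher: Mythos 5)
Your interior case is the standard Hartman--Wintner argument and coincides in substance with the paper's, which simply cites ~\cite[Lemma 1.4]{FHS}. For the boundary case your route is genuinely different from the paper's. The paper extends $\Sigma_1,\Sigma_2$ analytically \emph{as minimal surfaces} across $\s^2$ (Theorems 2 and 2' of \cite{DHT}), applies the interior result to the extensions to get a $2\ell$-prong, and then performs a case analysis on which of the $2\ell$ arcs actually lie inside the ball --- the delicate arcs being those tangent to $\s^2$ at $p$, which by analyticity lie either in $\Sigma_i$, outside the ball, or along coinciding free boundaries. You instead reflect the \emph{graph functions} evenly in Fermi coordinates and run planar nodal-set theory for the difference on a full disk. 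Your approach avoids the analytic extension theorem and the tangential-arc case analysis entirely, and (if completed) yields strictly more: the leading polynomial of the even function $w$ must be $c\,\mathrm{Re}(z^k)$, so no nodal ray is tangent to $\{t=0\}$, the arcs split into $k$ in $\{t>0\}$ and $k$ in $\{t<0\}$, and in fact $k\geq 2$; it also silently rules out the configuration of Figure \ref{n-prong}(b) (locally coinciding free boundaries), which the paper retains as a formal possibility but which is vacuous for distinct connected surfaces by Bj\"orling-type uniqueness. The price is that the reflection is not an isometry, so the reflected coefficients are not smooth.

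That price is exactly where your write-up has a soft spot. You claim only that $w$ solves a divergence-form equation with \emph{bounded measurable} coefficients and invoke ``Bers' theorem.'' With merely $L^\infty$ coefficients the Bers--Vekua/quasiconformal theory gives the nodal set only up to a homeomorphism of the plane: a topological $2k$-star with no control on tangent rays. That is not enough for the steps you then take --- ``these arcs occur in mirror pairs,'' ``no arc lies in $\{t=0\}$,'' and the clean count after restricting to $\{t\ge 0\}$ --- since a priori a topological arc could oscillate across $\{t=0\}$ and its trace on the half-disk need not be a union of arcs issuing from $p$. The fix is to observe that you actually have more regularity than you claim: since $u_{i,t}=0$ on $\{t=0\}$, the even reflections $\hat u_i$ are $C^{1,1}$ and the reflected metric coefficients $g_{ab}(\xi,\eta,|t|)$ are Lipschitz, so the linear equation for $w$ has Lipschitz principal part with $A(0)=\mathrm{Id}$ and bounded lower-order terms. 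Then Hartman--Wintner applies and gives nodal arcs that are $C^1$ up to the origin with the tangent rays of $P_k$, after which your symmetry argument closes. You should also record explicitly that the conormal fluxes from the two sides match along $\{t=0\}$ (they do, because the area integrand is even in $u_t$ in these coordinates, so the natural boundary condition is exactly $u_t=0$), which is what makes $\hat u_i$ a genuine weak solution across the reflection locus. With these two points supplied, your proof is correct.
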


\begin{proof}
See ~\cite[Lemma 1.4]{FHS} for the proof of $(1)$. For $(2)$, it is known that any free boundary minimal surface can be extended analytically as a minimal surface in a neighborhood of each point of its free boundary (see Theorems 2 and 2' in \cite{DHT}, pg. 178). So we can extend $\si_1$ and $\si_2$ on a neighborhood of $p$. Denote by $\tilde{\si}_i$, $i=1,2$, the extended surface from $\Sigma_i$. In particular, $\tilde{\si}_1$ and $\tilde{\si}_2$ are two minimal surfaces tangent at an interior point. Then, by item $(1)$, $\tilde{\si}_1\cap\tilde{\si}_2 $ is given locally by $2\ell$ curves, $\ell\geq 2$,  {\it starting} at $p$ and making equal angle. Denote these curves by $\alpha_1, \alpha_2, \cdots, \alpha_{2\ell}$ and by $v_1, v_2, \cdots, v_{2\ell}\in T_p{\tilde\Sigma_i}$ their tangent vectors at time zero. Denote by $\theta$ the (constant) angle between these vectors, and observe that $\theta\leq\pi/2$ and $\ell\theta=\pi$. Let $V$ be the half tangent plane of $\Sigma_i$ at the boundary point $p$, that is, $V$ is the set of vectors $v$ such that there exists a curve $\alpha\subset \Sigma_i$ with $\alpha'(0)=v.$ To simplify our notation, let us assume, without loss of generality, that $V$ is the half plane $\{y\geq0\}$, $p=(0,0,0)$, the sphere is centered at (0,1,0), and (after possibly reordering the vectors) $v_1$ is the vector with least angle from the positive $x$-axis (counterclockwise). In particular, $\angle (v_i, \  \mbox{positive} \  x\mbox{-axis})=\angle (v_1, \  \mbox{positive} \  x\mbox{-axis})+\theta(i-1)$. 
 
First observe that if $0<\angle (v_i, \  \mbox{positive} \  x\mbox{-axis})<\pi,$ then the vector $v_i$ is strictly pointing inside the sphere and, consequently,  the arc $\alpha_i$ is (locally) contained  in $\Sigma_i.$ 

Suppose $\angle (v_1, \  \mbox{positive} \  x\mbox{-axis})=0$. Notice that since $\gamma_{\s}^i$ are real analytic, locally the intersection $\gamma_{\s}^1\cap \gamma_{\s}^2$ is either only the point $p$ or they coincide. If $\gamma_{\s}^1\cap \gamma_{\s}^2=\{p\}$ locally, then, by analyticity, locally the arc $\alpha_1$ is necessarily either contained outside the sphere or inside the sphere; that is, the arc $\alpha_1\setminus\{p\}$ is either contained in $\tilde{\si}_i\setminus \Sigma_i$ or in $\si_i$. If $\gamma_{\s}^1=\gamma_{\s}^2$ locally, then the arc $\alpha_1$ has to coincide with them. Let us remark that the same conclusions hold if $\angle (v_i, \  \mbox{positive} \  x\mbox{-axis})=\pi$, for some $i.$

Since we have at least four arcs $\alpha_i$ and $\ell\theta = \pi$, there is at least one vector $v_i$ with $0<\angle (v_i, \  \mbox{positive} \  x\mbox{-axis})<\pi.$ Therefore, the intersection $\Sigma_1\cap \Sigma_2$ is given by $k$ curves, $k\geq1$, starting at $p$.
\end{proof}
\begin{figure}[!h]
\includegraphics[scale=0.65]{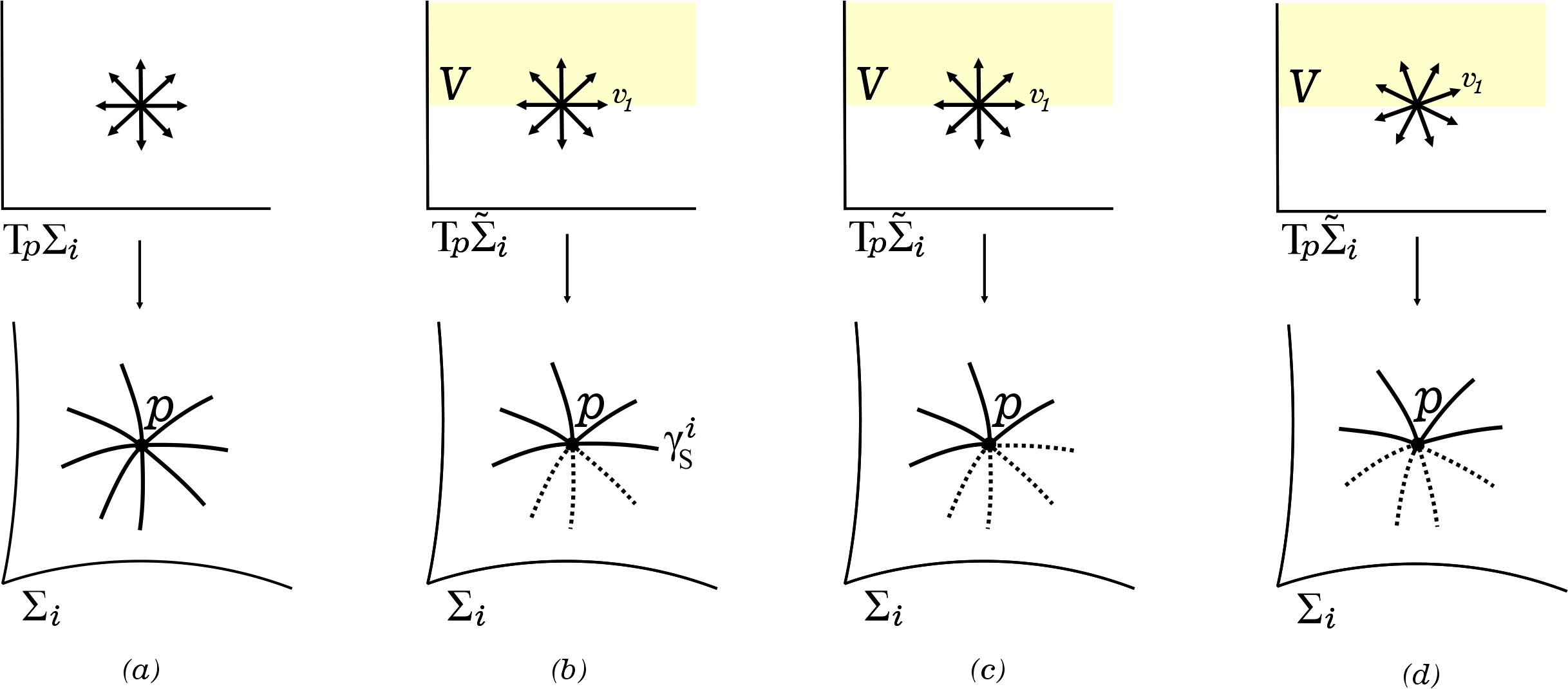}
\caption{Examples of n-prong singularities. Figure $(a)$: $p\in \mbox{int}\si_i$. Figure $(b)$: $\gamma_S^1$ and $\gamma_S^2$ locally coincide. Figure $(c)$: $\gamma_S^1\cap\gamma_S^2=\{p\}$ locally, $\angle (v_i, \ x\mbox{-axis})=0,$ $i=1,4,$ with $\alpha_1\subset \tilde{\si_i}\setminus\si_i$ and $\alpha_4\subset\si_i$. Figure $(d)$: $\angle (v_1, \ x\mbox{-axis})>0$.}
\label{n-prong}
\end{figure}

\begin{definition}\label{def-stable}
Let $\Sigma$ be a partially free boundary minimal surface in $\ba^3$ with piecewise smooth boundary $\p \Sigma=\gamma_I\cup \gamma_S$. We say that $\Sigma$ is {\it stable} if for any function $f\in C^{\infty}(\Sigma)$ such that $f|_{\gamma_I}\equiv 0$ we have
\begin{equation}\label{eq-stable}
-\int_{\Sigma} (f\Delta_{\Sigma}f+|A_\Sigma|^2f^2)d\Sigma+\int_{\gamma_S}\left(f\frac{\p f}{\p \nu}-f^2\right)ds\geq 0,
\end{equation}
or equivalently
\begin{equation}\label{eq-stable2}
\int_{\Sigma} (|\nabla_\Sigma f|^2-|A_\Sigma|^2f^2)d\Sigma-\int_{\gamma_S}f^2ds\geq 0,
\end{equation}
where $\nu$ is the outward normal vector field to $\gamma_S$.
\end{definition}

%In the following we say that a set $\si \subset \mathbb{R}^3$ is a surface with isolated singularities at $p_1,\cdots,p_n$ if $\si$ is a smooth surface with boundary away from $p_1,\cdots,p_n$, i.e.\begin{enumerate}\item for all $q \in \si\setminus \{p_1,\cdots,p_n\}$ there is a neighborhood $U$ of $q$ such that $U\cap \si$ is a smooth surface with (possibly empty) boundary;\item if $i \in \{1,\cdots,n\}$ and $V$ is a neighborhood of $p_i$, then $\si\cap V$ is not a $C^1$ surface.\end{enumerate}

\begin{lemma}\label{lem2}
Let $\Sigma$ be a compact orientable immersed partially free boundary stable minimal surface in $\ba^3$ with piecewise smooth boundary $\p \Sigma=\gamma_I\cup \gamma_S$.
Suppose $\gamma_I$ is contained in an equatorial disk. Then $\Sigma$ is totally geodesic.  The same result holds in the case where $\Sigma$ has finite area and isolated singularities on $\gamma_I$.
\end{lemma}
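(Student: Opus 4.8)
The plan is to test the stability inequality \eqref{eq-stable} with a suitable linear function. After an ambient rotation we may assume that the equatorial disk containing $\gamma_I$ lies in the plane $\{x_3=0\}$, and we take as test function $f=x_3|_\Sigma$, the third coordinate of the immersion. Since $\gamma_I\subset\{x_3=0\}$ we have $f|_{\gamma_I}\equiv 0$, so $f$ belongs to the admissible class of Definition \ref{def-stable} (it is smooth on $\Sigma$, being the restriction of an affine function). Two facts make this choice effective. First, because $\Sigma$ is minimal the coordinate functions are harmonic, $\Delta_\Sigma x_i=0$, hence $\Delta_\Sigma f=0$. Second, along $\gamma_S\subset\s^2$ the orthogonality of $\Sigma$ and $\s^2$ means that the unit normal of $\Sigma$ is tangent to $\s^2$; equivalently, the position vector $x$ (which is the outward unit normal of $\s^2$) is tangent to $\Sigma$ and, being orthogonal to $T\gamma_S\subset T\s^2$, it coincides with the outward unit conormal $\nu$ of $\Sigma$ along $\gamma_S$. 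Hence, since the disk passes through the origin, $\frac{\partial f}{\partial\nu}=\langle e_3,x\rangle=x_3=f$ on $\gamma_S$.

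Substituting $f=x_3$ into \eqref{eq-stable} and using these two facts, the interior term $\int_\Sigma f\,\Delta_\Sigma f\,d\Sigma$ vanishes and the boundary term $\int_{\gamma_S}\big(f\,\partial f/\partial\nu-f^2\big)\,ds$ vanishes as well, so the inequality collapses to
\[
-\int_\Sigma |A_\Sigma|^2 x_3^2\, d\Sigma\ \ge\ 0 .
\]
Therefore $|A_\Sigma|^2 x_3^2\equiv 0$ on $\Sigma$. On each connected component the function $x_3$ is real-analytic (minimal surfaces are real-analytic), so it either vanishes identically — in which case that component lies in $\{x_3=0\}$ and is trivially totally geodesic — or its zero set has zero area, forcing $|A_\Sigma|^2\equiv 0$ there. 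In all cases $\Sigma$ is totally geodesic.

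For the case where $\Sigma$ has finite area and isolated singularities on $\gamma_I$, the plan is to repeat the computation with $f=x_3\,\eta_\varepsilon$, where $\eta_\varepsilon$ is a logarithmic cutoff equal to $0$ near the singular points and to $1$ outside slightly larger neighborhoods of them. Since points have vanishing $2$-capacity one can arrange $\int_\Sigma|\nabla_\Sigma\eta_\varepsilon|^2\,d\Sigma\to 0$; together with $|x_3|\le 1$ on $\Sigma$ (because $\Sigma\subset\ba^3$), the finite area of $\Sigma$, and the Cauchy--Schwarz inequality, all the extra terms produced by differentiating the cutoff tend to $0$ as $\varepsilon\to 0$. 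Passing to the limit recovers $-\int_\Sigma|A_\Sigma|^2 x_3^2\,d\Sigma\ge 0$, and the conclusion follows as above.

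The computation itself is short; the points requiring care are the identification $\nu=x$ on $\gamma_S$ (which is precisely the free boundary condition) and, in the singular case, the verification that the logarithmic-cutoff error terms vanish in the limit — this is where the finite-area hypothesis and the bound $|x_3|\le 1$ are essential. A minor point is the passage from $|A_\Sigma|^2 x_3^2\equiv 0$ to total geodesy, which relies only on the real-analyticity (equivalently, unique continuation) of the harmonic function $x_3$ on each component of $\Sigma$.
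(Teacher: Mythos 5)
Your argument for the regular case is exactly the paper's: the test function $f=\langle x,v\rangle$ (your $x_3$), harmonicity from minimality, the identity $\partial f/\partial\nu=f$ on $\gamma_S$ from the free boundary condition, and the conclusion $|A_\Sigma|^2f^2\equiv 0$ followed by a unique-continuation/analyticity dichotomy. The only genuine divergence is in the singular case. You cut off logarithmically around the singular points and invoke the vanishing $2$-capacity of points; to make $\int_\Sigma|\nabla_\Sigma\eta_\varepsilon|^2\to 0$ rigorous on the surface itself you implicitly need a local area-ratio bound $\mathcal{H}^2\bigl(\Sigma\cap B_r(p)\bigr)\le Cr^2$ near each singular point (available via monotonicity for the area-minimizing currents to which the lemma is applied, but not literally among the stated hypotheses). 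The paper instead takes $\phi_\epsilon=\eta_\epsilon\circ f$, i.e.\ cuts off in the slab $\{|f|<\epsilon\}$ around the equatorial disk; since the singularities lie on $\gamma_I\subset D=\{f=0\}$ this automatically kills the test function near them, and the error term $\int_\Sigma f^2|\nabla_\Sigma\phi_\epsilon|^2\le 4C^2\,\mathrm{Area}\bigl(\Sigma\cap\{\epsilon<|f|<2\epsilon\}\bigr)$ tends to zero using only the finite-area hypothesis. Both routes reach the same conclusion; the paper's is slightly more economical in its assumptions, while yours is the more standard capacity argument and localizes the surgery to the actual singular set.
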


\begin{proof}
Let $\Sigma$ be as in the hypotheses and denote by $D$ the equatorial disk that contains its fixed boundary $\gamma_I$. 

Let us  first assume that the fixed boundary of $\Sigma$ does not have singularities.

Let $v\in \s^2$ be a vector orthogonal to the disk $D$ and consider the function $f(x)=\langle x,v\rangle$, $x \in \Sigma.$ By hypothesis we know that $f|_{\gamma_I}\equiv 0$, so (\ref{eq-stable}) holds. Moreover, since $\Sigma$ is minimal, it is well-known that 
\begin{equation}\label{eq1}
\Delta_\Sigma f(x)=0.
\end{equation}

%Fix a point $x \in\Sigma$ and consider $\{e_1,e_2\}$ a parallel orthonormal frame (with respect to $\nabla^{\Sigma}$) at the point $x.$ Hence
%\begin{equation}
%\begin{array}{rll}
%\Delta_\Sigma f(x)&=&\displaystyle\sum_{i=1}^2 e_i(e_i(f))=\sum_{i=1}^2e_i(\langle e_i, v\rangle)\\
%&=&\displaystyle\sum_{i=1}^2\left(\langle \nabla^{\R^3}_{e_i}e_i, v \rangle + \langle e_i,  \nabla^{\R^3}_{e_i}v\rangle\right)\\
%&=&\displaystyle\sum_{i=1}^2\left(\langle\nabla^{\Sigma}_{e_i}e_i+II_\Sigma(e_i,e_i),v\rangle+ \langle e_i,  \nabla^{\R^3}_{e_i}v\rangle\right)\\
%&=&0,
%\end{array}
%\label{eq1}
%\end{equation}
%where the last equality follows from the fact that $\Sigma$ is minimal and that we are using a parallel orthonormal frame.

On the other hand, 
\begin{equation}\label{eq2}
\frac{\p f}{\p \nu}(x)=\nu\langle x,v\rangle = \langle\nabla^{\R^3}_\nu x, v\rangle=\langle\nu(x), v\rangle=\langle x, v\rangle =f(x),
\end{equation}
since $\Sigma$ is free boundary on $\gamma_S$.

Using \eqref{eq1} and \eqref{eq2} in \eqref{eq-stable}, we get
\begin{equation}\label{eq3}
|A_\Sigma|^2(x)\langle x,v\rangle^2= 0 \ \ \mbox{for any} \ \ x\in\Sigma.
\end{equation}

If $|A_\Sigma|\equiv0$ then $\Sigma$ is totally geodesic and we are done.

If $|A_\Sigma|(x)>0$ for some $x\in\Sigma,$ then we can find a neighborhood $U$ of $x$ in $\Sigma$ such that $|A_{\Sigma}|$ is strictly positive. By \eqref{eq3}, this implies $\langle y, v\rangle=0$ for any $y\in U$, that is, $U$ is contained in the disk $D$. Therefore, $\Sigma$ is entirely contained in the disk $D;$ in particular, it is totally geodesic.

Now let us suppose that  $\Sigma$ has isolated singularities in the fixed boundary (which is contained in the equatorial disk). Let us consider a cut-off function $\eta_\epsilon:[-1,1]\to [0,1]$ so that
\begin{itemize}
\item $\eta_\epsilon(s)=0$ for $|s|<\epsilon$,
\item $\eta_\epsilon(s)=1$ for $|s|>2\epsilon$,
\item $|\eta^{'}_\epsilon|<\displaystyle\frac{C}{\epsilon}$, for some constant $C;$
\end{itemize}
and define $\phi_\epsilon:\Sigma\to [0,1]$ as $\phi_\epsilon(x)=\eta_\epsilon(f(x)),$ where $f(x)=\langle x, v\rangle.$ In particular, we have $|\nabla_\Sigma\phi_\epsilon|<C/\epsilon.$ 

Now let us take the function $f_\epsilon=\phi_\epsilon f$. It satisfies $f_\epsilon|_{\gamma_I}\equiv 0$ and so (\ref{eq-stable2}) holds.

Observe that 
$$|\nabla_\Sigma f_\epsilon|^2=\phi_\epsilon^2|\nabla_\Sigma f|^2+2f\phi_\epsilon\langle \nabla_\Sigma f, \nabla_\Sigma \phi_\epsilon\rangle + f^2|\nabla_\Sigma \phi_\epsilon|^2$$
and
$$\begin{array}{rcl}
\displaystyle\int_\Sigma \phi^2_\epsilon|\nabla_\Sigma f|^2d\Sigma &=& \displaystyle-\int_\Sigma f\phi_{\epsilon}^2\Delta_\Sigma f d\Sigma-\int_\Sigma 2f\phi_\epsilon\langle\nabla_\Sigma \phi_\epsilon, \nabla_\Sigma f\rangle d\Sigma+\int_{\p \Sigma}\phi_\epsilon^2 f\frac{\p f}{\p\nu}ds\\
\\
\displaystyle&=&\displaystyle -\int_\Sigma 2f\phi_\epsilon\langle\nabla_\Sigma \phi_\epsilon, \nabla_\Sigma f\rangle d\Sigma+\int_{\gamma_S}\phi_\epsilon^2 f^2ds,
\end{array}$$
since $\Delta_\Sigma f\equiv 0$, $\frac{\p f}{\p \nu}=f$ and $f|_{\gamma_I}\equiv 0.$

Hence, applying it to (\ref{eq-stable2}), we get
\begin{equation}\label{eq4}
\int_\Sigma  (f^2|\nabla_\Sigma \phi_\epsilon|^2 -|A_\Sigma|^2\phi_\epsilon^2 f^2)d\Sigma \geq0.
\end{equation}

Since $\Sigma$ has finite area and $|\nabla_\Sigma \phi_\epsilon|<C/\epsilon$, we have

\begin{equation}\label{eq5}
\int_\Sigma  f^2|\nabla_\Sigma \phi_\epsilon|^2d\Sigma < 4\epsilon^2\frac{C^2}{\epsilon^2}\textrm{Area}\bigl(\Sigma\cap\{|f|^{-1}(\epsilon, 2\epsilon)\}\bigr)\to 0 \ \mbox{as} \ \epsilon\to0.
\end{equation}

 Then, since $\phi_\epsilon\to 1$ as $\epsilon\to 0,$ \eqref{eq5} together with \eqref{eq4} yield
$$
\int_\Sigma|A_\Sigma|^2 f^2d\Sigma=0.
$$

Therefore, we get the same conclusions as above.
%In this case, for $\varepsilon > 0$ and $f$ as above, we consider the surface $\Sigma_{\varepsilon}=|f|^{-1}([\varepsilon, +\infty])$, which is a smooth surface for almost every $\varepsilon>0$, and define the smooth function $f_{\varepsilon}=|f|-\varepsilon$ on $\Sigma_\varepsilon$ and $f_\varepsilon=0$ on $\Sigma\setminus \Sigma_{\varepsilon}$. We have $\Delta_{\Sigma}f_\varepsilon=0$ and $\frac{\p f_{\varepsilon}}{\p \nu}=|f|=f_{\varepsilon}+\varepsilon$. Hence, applying $f_\varepsilon$ to  (\ref{eq-stable}) we get
%$$
%-\int_{\Sigma}|A_\Sigma|^2f_\varepsilon^2d\Sigma+\int_{\gamma_S}\varepsilon f_\varepsilon ds\geq 0.
%$$
%Since the last integral converges to zero as $\varepsilon$ approaches zero, we can take limits when $\epsilon$ goes to zero and apply the same arguments as above to get the same conclusion.
\end{proof}

\begin{remark}\label{rem-lem2}
Observe that by its proof, in order to be able to apply Lemma \ref{lem2}, we just need regularity and stability of the surface outside the equatorial disk where the fixed boundary is contained.
\end{remark}

\begin{remark}\label{rem-nodal}
Let $M$ be an embedded free boundary minimal surface in $\ba^3.$ Recall that a nodal domain of a function is a maximally connected subset of the domain where the function does not change sign, and the Courant nodal domain theorem for the Steklov problem says that an eigenfunction corresponding to the $n$-th nonzero Steklov eigenvalue has at most $n+1$ nodal domains. Let $P$ be a plane passing through the origin and let $v\in\s^2$ be a vector orthogonal to $P$. The Jacobi function $f: M\to \rr, f(x)=\langle x,v\rangle$,  defined in the proof of Lemma \ref{lem2}, is an eigenfunction with eigenvalue 1 for the Steklov problem. Hence, assuming Fraser-Li conjecture, it follows that $f$ has at most two nodal domains. Moreover, we can use the (interior and boundary) maximum principle with equatorial disks to conclude that $f$ has in fact two nodal domains, that is, the plane $P$ divides $M$ in excatly two connected surfaces.  Hence, our result in Theorem \ref{thm-main} can be seen as an evidence to the conjecture by Fraser and Li.
\end{remark}

\begin{definition}
Let $W$ be a region in $\ba^3$ and let $\alpha\subset W$ be a diameter. We say that $\alpha$ is {\it nullhomologous} in $W$ if there exists a compact surface $M \subset W$ such that $\partial M = \alpha\cup\gamma$, where $\gamma \subset \s^2$ (see Figure \ref{fig-null}).
\label{def-nul}
\end{definition}

\begin{figure}[!h]
\centering
\includegraphics[scale=0.6]{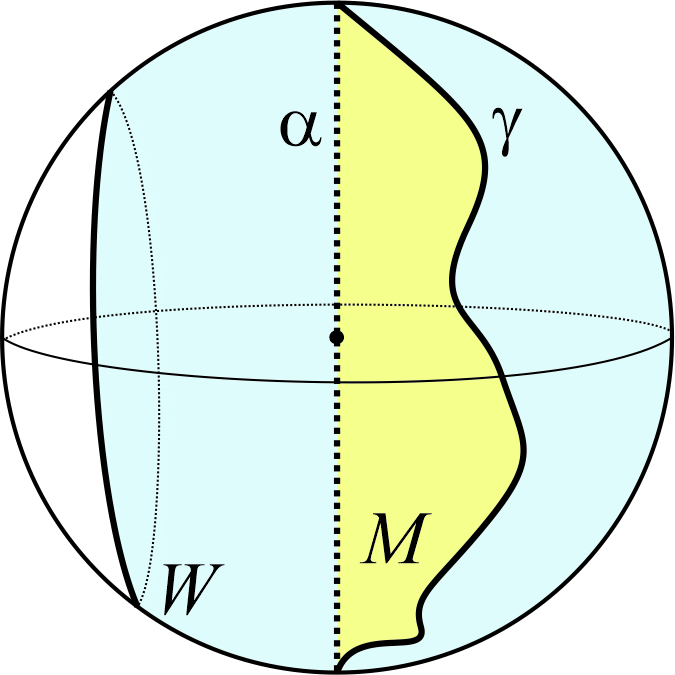}
\caption{In this region $W$, any diameter $\alpha\subset W$ is nullhomologous.}
\label{fig-null}
\end{figure}

The boundary of the region $W$ can be written as $U\cup V$, where $int (U)\subset int (\ba^3)$ and $V\subset \s^2.$ In the next theorem we will denote by $\p W$ the closure of the component $U,$ that is, $\p W=\overline U.$ 

%Moreover, we will denote by $\p W$ the closure of the component of the boundary of $W$ that is contained in the interior of $\ba^3,$ in other words, we are separating the boundary of the region $W$ in two pieces, one described as $\p W$ and the other one which is a piece of $\s^2.$

\begin{theorem}
Let $W\subset \ba^3$ be a connected closed region with (non-strictly) mean convex boundary such that $\partial W$ meets $\s^2$ orthogonally along its boundary and $\partial W$ is smooth. If $W$ contains a diameter $\alpha$, and $\alpha$ is nullhomologous in $W$, then $W$ is a closed halfball.
\label{thm-nul}
\end{theorem}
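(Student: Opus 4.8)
The plan is to argue by contradiction, building a minimal surface inside $W$ that separates the diameter $\alpha$ from the spherical part of $\partial W$, applying Lemma \ref{lem2} to conclude it is an equatorial disk, and then using the fact that this disk must coincide with (a piece of) $\partial W$ to force $W$ to be a halfball. First I would use the nullhomology hypothesis: since $\alpha$ is nullhomologous there is a compact surface $M\subset W$ with $\partial M=\alpha\cup\gamma$, $\gamma\subset\s^2$. I would like to minimize area in the homology/homotopy class of $M$ within $W$, keeping $\alpha$ fixed and letting the rest of the boundary slide freely on $\s^2$: this is exactly the partially free boundary problem of Theorem C, with fixed boundary $\gamma_I=\alpha$ (a diameter, hence a straight segment meeting $\s^2$ orthogonally at the endpoints) and free boundary on $\s^2$. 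The mean convexity of $\partial W$ together with the maximum principle keeps the minimizer $\Sigma$ inside $W$ and away from $\partial W$ except possibly along $\s^2$; by Theorem C the minimizer $\Sigma$ is a connected, embedded, stable minimal surface with partially free boundary, $C^{1,\alpha}$ at the two corners (the endpoints of $\alpha$ on $\s^2$) and smooth elsewhere, meeting $\s^2$ orthogonally.

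Next, the fixed boundary $\gamma_I=\alpha$ is a diameter, hence contained in an equatorial disk $D$. By Lemma \ref{lem2} (in its version allowing isolated singularities on $\gamma_I$ — here the only possible non-smooth points are the two corners, and $\Sigma$ has finite area), $\Sigma$ is totally geodesic, hence a piece of an equatorial disk. Since $\Sigma$ is connected, contains the full diameter $\alpha$, meets $\s^2$ orthogonally, and is embedded, $\Sigma$ must be exactly the equatorial disk $D$ itself (the diameter $\alpha$ lies on a unique equatorial disk, and the only totally geodesic free-boundary-type surface through it is that whole disk). So we have produced an equatorial disk $D\subset W$ with $D\setminus\alpha\subset\mathring W$ or on $\partial W$.

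Now I would extract the geometric conclusion. The disk $D$ splits $\ba^3$ into two closed halfballs $B^+$ and $B^-$, and since $D\subset W$ and $\partial W$ is a smooth mean convex surface meeting $\s^2$ orthogonally, I claim $W$ contains one of them, say $B^+$. Indeed, consider the part of $\partial W$ on the side of $D$: if $\partial W$ dipped strictly into the interior of $B^+$ it would, being compact, have an interior touching point with some parallel equatorial disk or a translate, and there the mean curvature comparison (mean convexity of $\partial W$ versus the zero mean curvature of the separating disk, via the maximum principle, using Appendix A / Serrin's maximum principle at the corner where $\partial W$ meets $\s^2$) would be violated — a sweep-out / foliation argument by equatorial disks, or by the family of spherical caps, pushes $\partial W$ off $B^+$ entirely. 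Hence $B^+\subset W$. Finally, $W\setminus B^+$ is a region with $D$ on part of its boundary and mean convex boundary meeting $\s^2$ orthogonally; by minimality of $D$'s area among competitors and the fact that $\partial(W\setminus B^+)\cap\mathring\ba^3$ has no interior mean-convex obstruction touching $D$, the same maximum principle forces $W\setminus B^+$ to have empty interior, i.e. $W=B^+$ is a closed halfball.

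The main obstacle I expect is the step establishing that the minimizer $\Sigma$ does not touch $\partial W$ in the interior of the ball and, relatedly, that $\partial W$ cannot penetrate the halfball $B^+$: both require a careful maximum-principle argument for surfaces with free boundary that meet $\s^2$ orthogonally, including behaviour at the corners where $\partial W$ (and $\Sigma$) meet $\s^2$. This is precisely why Appendix A (Serrin's maximum principle at a corner) is needed; the interior comparison is standard, but the boundary/corner comparison with the mean convex obstacle is delicate and is the technical heart of the argument. A secondary subtlety is confirming that the partially free boundary minimization genuinely produces a surface realizing the nontrivial class (so that $\Sigma$ is nonempty and contains $\alpha$), which follows from the existence theory for currents cited before Theorem C together with a cut-and-paste argument showing the minimizer's fixed boundary is exactly $\alpha$.
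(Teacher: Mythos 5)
Your overall strategy for the first half coincides with the paper's: minimize the partially free boundary problem with fixed boundary $\alpha$, establish stability, and apply Lemma \ref{lem2} to conclude the minimizer is totally geodesic. However, there are three genuine gaps. First, you misidentify the minimizer: since its fixed boundary is exactly $\alpha$ and its free boundary lies on $\s^2$, the totally geodesic surface produced is a \emph{half-disk} bounded by $\alpha$ and a semicircle, not the full equatorial disk; moreover your assertion that ``the diameter $\alpha$ lies on a unique equatorial disk'' is false --- $\alpha$ lies on a one-parameter family of equatorial disks obtained by rotation about the line containing $\alpha$. Your final paragraph, which starts from ``we have produced an equatorial disk $D\subset W$,'' therefore rests on a false premise. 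Second, you claim the maximum principle keeps $\Sigma$ away from $\partial W$, but Proposition \ref{max.princ} only gives a dichotomy: either $\spt T\setminus\alpha$ avoids $\partial W$ or $\partial W\subset\spt T$. In the second case $\Sigma$ lies \emph{on} the obstacle and only one-sided variations are admissible, so stability is not automatic (and certainly does not follow from Theorem C, which is a regularity statement); the paper needs a separate first-eigenfunction argument there. (That case is then immediately conclusive, since a half-disk contained in $\partial W$ forces $W$ to be a halfball, but it must be addressed.)

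Third, and most importantly, the concluding geometric step is not a proof. Sweeping by ``parallel equatorial disks or translates'' does not work, because translates of an equatorial disk are not equatorial disks and do not meet $\s^2$ orthogonally, so the free-boundary maximum principle does not apply to them. The paper's argument is cleaner and you should adopt it: after arranging (by an initial rotation of $\alpha$) that $\alpha\cap\partial W\neq\emptyset$, rotate the half-disk $\Sigma$ about the axis $\alpha$ until the last moment it remains inside $W$; at that moment there is a tangency point $p$ with $\partial W$, and one concludes by the interior Hopf lemma (if $p\in\mathrm{int}(\alpha)$), by Serrin's maximum principle at a corner as in Appendix \ref{app} (if $p\in\partial\alpha$), or by the interior/free-boundary maximum principle (if $p\in\Sigma\setminus\alpha$). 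This rotation-to-last-contact argument is the step your proposal is missing, and it is exactly where the corner analysis of Theorem \ref{thm.corner} is used.
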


\begin{proof}
Up to a rotation of $\alpha$ around the origin, we can assume that $\alpha\cap \p W$ is nonempty. Since $\alpha$ is nullhomologous in $W$, there is at least one curve $\gamma\subset \s^2$ such that there exists a surface contained in $W$ with boundary $\Gamma=\alpha\cup \gamma$. We consider the class of admissible currents
\begin{eqnarray*}
\mathfrak{C} = \{T \in \de 2(\rt); \ T \ \mbox{is integer multiplicity rectifiable},\\
\spt T \subset W \ \mbox{and is compact}, \ \mbox{and} \ \spt \bigl([[\alpha]] - \p T\bigr) \subset \s^2\cap W\},
\end{eqnarray*}
where $[[\alpha]]$ is the current associated to $\alpha$, and we minimize area (mass) in $\mathfrak{C}.$ Then, by the results presented in Section \ref{pfb}, we get a compact embedded (orientable) partially free boundary minimal surface $\Sigma\subset W$ which minimizes area among compact surfaces in $W$ with boundary on the class $\Gamma=\alpha\cup\gamma;$ in particular, its fixed boundary is exactly $\alpha.$ Moreover, by Proposition \ref{max.princ} in Section \ref{pfb}, either $\Sigma\subset\p W$ or $\Sigma\cap \p W\subset \alpha.$

%Let us continue denoting $\partial \Sigma=\alpha\cup\gamma$. By the (interior and boundary version of) the maximum principle (for surfaces with nonnegative mean curvature meeting $\s^2$ orthogonally), we know that either $\Sigma\subset\p W$ or $\Sigma\cap \p W\subset \alpha.$ In fact, suppose $\Sigma\cap \p W\not\subset \alpha$. Then there exists a point $p\in \Sigma\cap \p W$ such that either $p\in \mbox{int}(\Sigma)$ or $p\in \gamma\subset \s^2.$ If $p\in \mbox{int}(\Sigma)$, then  the maximum principle implies that $\Sigma\subset\p W$, once $\Sigma$ is on one side of $\p W$. For the case where $p\in \s^2$, observe that $\p W\cap \s^2$ is a set of curves; in particular, $\p W$ contains a curve $c$ passing through $p$. Since $\Sigma\subset W$,  $c$ is necessarily tangent to $\gamma$ at $p$ and then, since $\p W$ meets $\s^2$ orthogonaly, we have that $\p W$ and $\Sigma$ are tangent at $p$ with the same normal vector; therefore, by the maximum principle with boundary, we conclude that $\Sigma\subset\p W$. observe that it can not contain points of gamma, because of the maximum principle with boundary.

% We want to apply Lemma \ref{lem2} to $\si$ and for that we need to prove the stability of $\Sigma.$

\begin{claim}
\label{claim1}
$\Sigma$ is stable.
\end{claim}
In the case $\partial W\cap\si \subset \Gamma$, the surface $\si$ is automatically stable in the sense of Definition \ref{def-stable}, since it minimizes area for all local deformations. 
Suppose $\si \subset \p W$. For any $f\in C^\infty(\Sigma)$ with $f|_{\alpha}\equiv0$, consider $Q(f,f)$ defined by
$$Q(f,f)=\frac{\int_\Sigma\left(|\nabla_\Sigma f|^2-|A_\Sigma|^2f^2\right)d\Sigma-\int_{\gamma}f^2ds}{\int_{\si}f^2 d\si},$$ 
and let $f_1$ be a first eigenfunction, i.e., $Q(f_1,f_1) = \inf_{f} Q(f,f)$.

Observe that although differently from the classical stability quotient (we have an extra term that depends on the boundary of $\Sigma$) we can still guarantee the existence of a first eigenfunction. In fact, since for any $\delta>0$ there exists $C_\delta>0$ such that $||f||_{L^2(\partial\Sigma)} \leq \delta ||\nabla f||_{L^2(\Sigma)}+ C_\delta ||f||_{L^2(\Sigma)}$, for any $f\in W^{1,2}(\Sigma)$, we can use this inequality to prove that the infimum is finite. Once this is established the classical arguments to show the existence of a first eigenfunction work.

Since $|\nabla |f_1||=|\nabla f_1|$ a.e., we have $Q(f_1,f_1)=Q(|f_1|,|f_1|)$, that is, $|f_1|$ is also a first eigenfunction. Since $|f_1|\geq 0$, the maximum principle implies that $|f_1|>0$ in $\Sigma\setminus\partial\si$, in particular, $f_1$ does not change sign in $\Sigma\setminus\partial\si$. Then we can assume that $f_1>0$ in $\Sigma\setminus\partial\si$ and, by continuity, we get $f_1\geq0$ in $\gamma.$ Therefore, we can use $f_1$ as a test function to our variational problem:
 Let $\zeta$ be a smooth vector field such that $\zeta(x) \in T_x\s^2,$ for all $ x \in \s^2$, $\zeta(x)\in (T_x\si)^\perp,$ for all $x \in \si$, and $\zeta$ points towards $W$ along $\si$. Let $\Phi$ be the flow of $\zeta$. For $\varepsilon$ small enough the surfaces $\si_t = \{\Phi\bigl(x,tf_1\bigr)$; $x \in \si$, $0 < t < \varepsilon\}$ are contained in $W$. Since $\Sigma$ has least area among the surfaces $\si_t$, we know that 
$$0 \leq \frac{d^2}{dt^2}\biggl|_{t=0^+}|\si_t| = \int_{\Sigma} (|\nabla_\Sigma f_1|^2-|A_\Sigma|^2f_1^2)d\Sigma-\int_{\gamma}f_1^2ds,$$
which implies that $Q(f_1,f_1)\geq0$. Since $f_1$ is a first eigenfunction, we get that $Q(f,f)\geq0$ for any $f\in C^\infty(\Sigma)$ with $f|_{\alpha}\equiv0$. Therefore, we have stability for $\Sigma$.

Then, since $\alpha$ is contained in an equatorial disk, Lemma \ref{lem2} implies that $\Sigma$ is necessarily a half disk. If  $\Sigma\subset\p W$, then we already conclude that $W$ has to be a halfball.

Suppose $\Sigma\cap \p W\subset \alpha$. Rotate $\Sigma$ around $\alpha$ until the last time it remains in $W$ (this last time exists once $\Sigma\cap \p W$ is non empty), and let us still denote this rotated surface by $\Sigma$. In particular, there exists a point $p$ where $\Sigma$ and $\p W$ are tangent. We will conclude that $W$ is necessarily a halfball.

In fact, if $p\in \mbox{int}(\alpha),$ we can write $\p W$ locally as a graph over $\si$ around $p$ and apply the classical Hopf Lemma; if $p\in \p \alpha,$ we can use the Serrin's Maximum Principle at a corner (see Appendix \ref{app} for the details); and if $p\in \Sigma\setminus\alpha$ we can apply (the interior or the free boundary version of) the maximum principle. In any case, we get that $W$ is a halfball.
%Consider $\phi_\theta$ the rotation by $\theta$ around $\alpha$ and notice that $\bigcup\phi_\theta(\Sigma)\setminus\alpha$ is a foliation of $\ba^3\setminus\alpha$. Then, there exist $\theta_0<0, \theta_1>0$ such that either $\phi_{\theta_i}(\Sigma)\setminus\alpha, i=0,1,$ has a first contact point $q_i$ with $\p W\setminus\alpha,$ or these first contact points occur at $\alpha$, ie., there exists sequences $\{p^i_n\}$ in $\si$, $i=1,2$, such that $\lim_{\theta \to \theta_i} \phi_{\theta}(p_n) = q_i \in \alpha$. We have that $\si$ and $\phi_{\theta}$ are tangent at $q_i$. Suppose $q_i\in \mbox{int}(\phi_{\theta_i}(\Sigma))$ or $q_i\in (\alpha\cup \gamma)\setminus \p\alpha$. Hence, we can apply (the interior and the boundary version of) the maximum principle to conclude that $\phi_{\theta_i}(\Sigma), i=0,1,$ is contained in $\p W.$  Since the union of the two half disks is a surface without boundary, it must coincide with the whole $\p W$. Also, we prove in the appendix \ref{app} that the same conclusion is true if $q_i \in \p\alpha$. 

%Finally, we can apply the same argument in the other case where $\Sigma\subset\p W$. Therefore, the theorem is proved.
\end{proof}

An equatorial disk $D$ divides the ball $\ba^3$ into two (open) halfballs. We will denote these two halfballs by $\ba^+$ and $\ba^-,$ and we have $\ba^3\setminus D = \ba^+\cup \ba^-.$

In the next proposition we will summarize some simple facts about partially free boundary minimal surfaces in $\ba^3$ which we will use in the proof of Theorem \ref{thm-main}.

\begin{proposition}
\begin{enumerate}
\item[(i)] Let $D$ be an equatorial disk and let $\Sigma$ be a partially free boundary minimal surface in $\ba^3$ contained in one of the closed halfballs determined by $D$ and such that $\gamma_I\subset D$ (if $\gamma_I \neq \emptyset$). If $\Sigma$ is not an equatorial disk, then $\Sigma$ has necessarily nonempty fixed boundary and nonempty free boundary.

\item[(ii)] The only (partially) free boundary minimal surface that contains an arc segment of a great circle in its free boundary is (contained in) an equatorial disk.
\end{enumerate}
\label{prop-simple}
\end{proposition}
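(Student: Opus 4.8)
\textbf{Plan for the proof of Proposition \ref{prop-simple}.}

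For part (i), the plan is to argue by contradiction using the maximum principle together with Lemma \ref{lem2}. Suppose $\Sigma$ is contained, say, in the closed halfball $\overline{\ba^+}$, has $\gamma_I \subset D$, but fails the conclusion; that is, either $\gamma_S = \emptyset$ or $\gamma_I = \emptyset$. If $\gamma_I = \emptyset$ then $\Sigma$ is a genuine free boundary minimal surface contained in a closed halfball, so its free boundary $\gamma_S$ lies in the closed hemisphere $\s^2 \cap \overline{\ba^+}$; comparing $\Sigma$ with the equatorial disk $D$ (which meets $\s^2$ orthogonally and is minimal), I would use the interior and free-boundary maximum principles to force $\Sigma \subset D$, hence $\Sigma$ is an equatorial disk, contradicting the hypothesis. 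The remaining case is $\gamma_S = \emptyset$ but $\gamma_I \neq \emptyset$: then $\partial \Sigma = \gamma_I \subset D$ and $\Sigma$ is a compact minimal surface contained in $\overline{\ba^+}$ with boundary on $D$. By Lemma \ref{lem2} (using the standard trick that area-minimizing or, more generally, the relevant stability is automatic here — actually one does not even need stability: one can directly slide copies of $D$), one concludes $\Sigma$ is totally geodesic; since $\partial\Sigma \subset D$ and $\Sigma$ is connected and totally geodesic in $\ba^3$, it must be a subdomain of $D$, again contradicting that $\Sigma$ is not an equatorial disk. To make the ``slide $D$'' argument clean I would translate the equatorial disk $D$ in the direction of the normal $v$ into $\ba^+$, take the last translate $D_t$ touching $\Sigma$, and apply the interior maximum principle at an interior touching point or the boundary maximum principle (Hopf) at a boundary touching point, concluding $\Sigma$ lies in some $D_t$; but since $\partial\Sigma\subset D=D_0$ this forces $t=0$ and $\Sigma\subset D$.

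For part (ii), let $\Sigma$ be a (partially) free boundary minimal surface whose free boundary $\gamma_S$ contains an arc $\beta$ of a great circle $C = \s^2 \cap P$, where $P$ is a plane through the origin. The idea is to reflect across $P$: let $R$ denote the reflection of $\rt$ through $P$. Since $C \subset P$ is fixed by $R$ and $\Sigma$ meets $\s^2$ orthogonally along $\gamma_S \supset \beta$, the orthogonality forces $\Sigma$ to meet the plane $P$ orthogonally along $\beta$ as well (the tangent plane to $\Sigma$ along $\beta$ contains the tangent line to $\beta$ and the inward conormal to $\s^2$, and since $\beta \subset P$ and $\Sigma \perp \s^2$, a short computation shows $T_x\Sigma \perp P$ for $x \in \beta$). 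Hence $\Sigma \cup R(\Sigma)$ extends $\Sigma$ smoothly (real-analytically) across $\beta$ as a minimal surface: indeed, $\Sigma$ can be analytically continued past its free boundary by Theorems 2 and 2' in \cite{DHT} as used in Lemma \ref{lem.n-prong}, and this continuation must agree with $R(\Sigma)$ by the reflection principle / uniqueness of analytic continuation. Equivalently, one applies the Schwarz reflection principle directly: $\Sigma$ near $\beta$ is a minimal graph meeting $P$ orthogonally, so its reflection glues to a smooth minimal surface containing $\beta$ in its interior.

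Now the reflected-and-doubled surface $\widehat\Sigma$ is a minimal surface containing the planar geodesic arc $\beta \subset P$ in its interior. Since $\beta$ is a straight arc only in the spherical sense — $C$ is a great circle, not a straight line — I instead observe that $\beta$ is a geodesic of $\widehat\Sigma$ lying in the totally geodesic (in $\ba^3$, i.e.\ flat) surface $D_P := \ba^3\cap P$, and the unit normal of $\widehat\Sigma$ along $\beta$ is tangent to $P$; comparing $\widehat\Sigma$ with the equatorial disk $D_P$, both are minimal, they are tangent along $\beta$ (same tangent plane at each point of $\beta$, namely $T_x\widehat\Sigma$ contains $\beta'$ and the $P$-tangent normal), so by the unique continuation principle for minimal surfaces (two minimal surfaces tangent along a curve coincide) we get $\widehat\Sigma = D_P$ near $\beta$, and by analyticity $\Sigma$ is contained in $D_P$, an equatorial disk. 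The main obstacle I anticipate is verifying carefully that orthogonality of $\Sigma$ to $\s^2$ along $\beta$ really does give orthogonality to $P$ along $\beta$ (so that the reflection is smooth and the tangency with $D_P$ holds); this is the geometric crux and deserves the short explicit computation in a moving frame along $\beta$, using that $\beta \subset C = \s^2 \cap P$ and that the position vector $x$, the conormal of $\s^2$, and $\beta'(s)$ span the relevant planes.
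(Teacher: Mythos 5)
Part (i) of your proposal is essentially the paper's argument: slide planes parallel to $D$ when $\gamma_S=\emptyset$, and use a maximum-principle comparison with equatorial disks when $\gamma_I=\emptyset$. One small point: in the case $\gamma_I=\emptyset$ you cannot simply ``compare $\Sigma$ with $D$'', since $\Sigma$ need not touch $D$; you need a sweep-out, and the paper uses the family of equatorial disks obtained by rotating $D$ about a diameter, taking the last one that meets $\Sigma$ and applying the interior or free-boundary maximum principle there. Your aside that stability ``is automatic'' for Lemma \ref{lem2} is not justified (an arbitrary partially free boundary minimal surface need not be stable), but you correctly discard it in favor of the sliding argument, so no harm is done.

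Part (ii) contains a genuine error at what you yourself identify as the crux. Let $P$ be the plane of the great circle $C=\s^2\cap P$ and let $x\in\beta\subset C$. The free boundary condition says the conormal of $\Sigma$ along $\gamma_S$ is the radial vector $x$; since $x\in P$ and $\beta'(x)\in P$, the tangent plane $T_x\Sigma=\mathrm{span}\{\beta'(x),x\}$ \emph{equals} $P$. So $\Sigma$ is \emph{tangent} to $P$ along $\beta$, not orthogonal to it. Consequently the Schwarz reflection across $P$ is not available (that principle requires orthogonal intersection with $P$, or a straight line in the surface), and $\Sigma\cup R(\Sigma)$ is not a smooth continuation of $\Sigma$ across $\beta$. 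Your second paragraph is internally inconsistent with the first: if $T_x\widehat\Sigma\perp P$ then $\widehat\Sigma$ and $D_P$ are transverse, not tangent, along $\beta$. The correct statement — tangency of $\Sigma$ and $D_P$ along $\beta$ — is exactly the starting point of the paper's proof: since both surfaces are free boundary, they are tangent along the shared arc, and then Lemma \ref{lem.n-prong} shows that near a point of $\beta$ either $\Sigma$ lies locally on one side of $D_P$ or the intersection consists of finitely many curves emanating from that point (in which case $\Sigma$ lies on one side of $D_P$ near the other points of $\beta$); the boundary maximum principle then forces $\Sigma\subset D_P$. Your final appeal to ``two minimal surfaces tangent along a curve coincide'' is a correct idea and could replace the maximum-principle step (tangency along a whole arc is incompatible with the isolatedness of $n$-prong singularities for distinct minimal surfaces, after extending $\Sigma$ analytically across its free boundary), but as written it rests on the false orthogonality claim and the unjustified reflection.
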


\begin{proof}
${\it (i)}$  If the free boundary were empty, we could apply the (interior) maximum principle with the family of planes parallel to the disk $D$ and conclude that $\Sigma$ should be a disk. On the other hand, if the fixed boundary were empty, then we would have a minimal surface entirely contained in a halfball without fixed boundary; hence, we could apply the (interior or free boundary version of) maximum principle with the family of equatorial disks that are rotations of $D$ around a diameter and conclude that $\Sigma$ should be a disk as well.

${\it (ii)}$ Let $D$ be an equatorial disk and suppose that $\Sigma$ is a (partially) free boundary minimal surface such that $\Sigma\cap D$ contains an arc segment $\alpha$ in $\s^2;$ in particular, since they are both free boundary, we know they are tangent along $\alpha.$ Hence, given a point $x\in \alpha$ there exists a neighborhood $U$ of $x$ in $D$ where either $\Sigma$ is on one side of $D$ or $\Sigma\cap U\setminus\alpha$ is given by a collection of $k$ curves,  $k\geq 1$, starting at $x$ (see Lemma \ref{lem.n-prong}). In this last case, for any point in $(\alpha\cap U)\setminus \{x\}$, we will have a neighborhood where $\Sigma$ is on one side of $D$; therefore, in either case, applying the boundary maximum principle we can conclude that $\Sigma$ should be (contained in) an equatorial disk.
\end{proof}

\begin{remark}
\label{ortho}
If $\Sigma_1$ and $\Sigma_2$ are two partially free boundary minimal surfaces in $\ba^3$ that intersect at a point $p\in\p \Sigma_1\cap \p\Sigma_2\cap \s^2$ transversally, then the intersection is locally given by a simple curve that meets $\s^2$ orthogonally at $p$.  In fact, in the same way as we argued in the proof of Lemma \ref{lem.n-prong}, item $(2)$, we can show that the intersection is locally given by a simple curve $\gamma$ that meets $\s^2$ at $p=\gamma(0)$. Let $\nu_i$ be the normal vector to $\Sigma_i$, $i=1,2.$  Since they meet transversally, we know $span(\nu_1,\nu_2)=T_p\s^2$ and,  since $\gamma\subset \Sigma_1\cap\Sigma_2,$ $\gamma'(0)$ is orthogonal to both $\nu_1, \nu_2$. Therefore, $\gamma$ meets $\s^2$ orthogonally at $p$. 
\end{remark}

Now we can prove the two-piece property for free boundary minimal surfaces in $\ba^3$.

\begin{theorem}
Let $M$ be a compact embedded free boundary minimal surface in $\ba^3.$ Then for any equatorial disk $D$, $M\cap \ba^+$ and $M\cap \ba^-$ are connected.
\label{thm-main}
\end{theorem}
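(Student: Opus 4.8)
The plan is to argue by contradiction following Ros's original strategy, adapted to the free boundary setting, using Theorem~\ref{thm-nul} as the main engine. Suppose some equatorial disk $D$ (with unit normal $v$) separates $M$ so that, say, $M\cap\ba^+$ is disconnected; write $M\cap\ba^+ = \Omega_1\cup\Omega_2\cup\cdots$ with the $\Omega_j$ the connected components. The key observation is that each $\overline{\Omega_j}$ is a compact surface whose boundary consists of a free boundary part on $\s^2$ and a fixed boundary part contained in $D$ (a union of arcs and curves of $M\cap D$). One then wants to produce, for a well-chosen collection of these pieces, a region $W\subset\overline{\ba^+}$ bounded by these surface pieces together with a portion of $\s^2$ and a portion of $D$, such that $W$ contains a diameter which is nullhomologous in $W$; applying Theorem~\ref{thm-nul} forces $W$ to be a halfball, which together with the embeddedness of $M$ and Proposition~\ref{prop-simple} forces $M$ to be an equatorial disk — but an equatorial disk is never separated into two pieces by another equatorial disk through the origin unless they coincide (in which case the statement is vacuous or trivially checked), giving the contradiction.

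More precisely, first I would dispose of the trivial cases: if $M=D'$ is itself an equatorial disk, then $D\cap D'$ is a diameter and each of $M\cap\ba^\pm$ is a half-disk, hence connected; so assume $M$ is not a disk. Next, by the maximum principle (as in Proposition~\ref{prop-simple} and Remark~\ref{rem-nodal}) applied with the family of equatorial disks obtained by rotating $D$, together with the fact that the Jacobi function $f(x)=\langle x,v\rangle$ has no interior or free-boundary local extrema on $M\setminus D$, each component $\Omega_j$ must touch $D$, i.e. have nonempty fixed boundary, and also must have nonempty free boundary. The heart of the argument is then a cut-and-paste construction: take one component $\Omega=\Omega_1$, let $\alpha$ be a diameter of $\ba^3$ lying in $D$ chosen to meet the region enclosed, and consider the region $W$ in $\overline{\ba^+}$ having $\Omega$, a piece of $D$, and a piece of $\s^2$ as boundary, arranged (after possibly reflecting across $D$ the complementary component and regluing, exactly as in Ros's doubling trick) so that $\partial W$ is mean convex, meets $\s^2$ orthogonally, and contains the nullhomologous diameter $\alpha$. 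Theorem~\ref{thm-nul} then says $W$ is a closed halfball, whence $\Omega$ (being part of $\partial W$ off of $D$) is an equatorial half-disk, contradicting the assumption that $M$ is not a disk (via Proposition~\ref{prop-simple}(ii) or the unique continuation / maximum principle, since $M\supset\Omega$ shares an equatorial half-disk).

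The main obstacle I anticipate is making the cut-and-paste region $W$ genuinely satisfy all hypotheses of Theorem~\ref{thm-nul} simultaneously: one needs $\partial W$ to be \emph{smooth} (or at least the corner structure to be controlled), to meet $\s^2$ \emph{orthogonally}, to be \emph{mean convex} (with the correct sign of the mean curvature vector, pointing out of $W$), and one needs to exhibit an honest diameter $\alpha\subset W$ that is nullhomologous in $W$. The smoothness and orthogonality at the seam where $\Omega$ meets $D$ require that $M$ meets $D$ transversally or, where it is tangent, that the $n$-prong structure from Lemma~\ref{lem.n-prong} be ruled out along the relevant arcs — here embeddedness of $M$ and the fact that $D$ is totally geodesic (hence minimal with vanishing mean curvature, so it is on "one side" locally by the maximum principle of Proposition~\ref{prop-simple}(ii)) is what saves the day, but the bookkeeping of which pieces of $D$ to include in $\partial W$, and orienting everything so mean convexity holds, is delicate. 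A secondary subtlety is the existence of a nullhomologous diameter inside $W$: one should choose $\alpha\subset D$ passing through the origin and check that $\Omega$ itself (or $\Omega$ together with a disk in $D$) provides the required bounding surface $\gamma\cup\alpha$ with $\gamma\subset\s^2$; this is where the assumption that $\Omega$ is a \emph{component} of $M\cap\ba^+$, not merely a subsurface, is used, since it controls how $\partial\Omega$ sits inside $D\cup\s^2$.
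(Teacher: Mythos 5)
Your overall frame (argue by contradiction, use Proposition \ref{prop-simple}(i) to show every component of $M\cap\ba^+$ has nonempty fixed and free boundary, and invoke Theorem \ref{thm-nul}) overlaps with the paper, but the heart of your argument --- the cut-and-paste region $W\subset\overline{\ba^+}$ bounded by one component $\Omega_1$ together with pieces of $D$ and $\s^2$ --- is not what the paper (or Ros) does, and it has a gap I do not see how to close. Such a $W$ cannot be made to satisfy the hypotheses of Theorem \ref{thm-nul}: its boundary has corners along $M\cap D$, so it is not smooth, and the proof of Theorem \ref{thm-nul} (rotating a half-disk inside $W$ and applying Hopf/Serrin at a tangency) genuinely needs that smoothness and a single-sheeted mean convex boundary; the ``doubling trick'' you allude to does not produce a smooth surface unless $M$ is symmetric about $D$, and it is also unclear why a diameter would be nullhomologous in your $W$. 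More tellingly, nothing in your construction actually uses the disconnectedness of $M\cap\ba^+$: if it worked, it would apply verbatim when $M\cap\ba^+$ is connected and ``prove'' that every embedded free boundary minimal surface is an equatorial disk, which is false (e.g.\ the critical catenoid). So the contradiction cannot come from where you are looking for it.

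The missing idea is the minimization step, which is the faithful adaptation of Ros's actual argument. Let $M_1$ be one component of $M\cap\ba^+$ with fixed boundary $\gamma_I$, and let $W$, $W'$ be the closures of the two components of $\ba^3\setminus M$ --- these are the regions with smooth mean convex boundary, not a region cut out of $\overline{\ba^+}$. One minimizes area among currents in $W$ (resp.\ $W'$) with fixed boundary $[[\gamma_I]]$ and free boundary on $\s^2$ (Section \ref{pfb}), obtains stable partially free boundary minimal surfaces $\si\subset W$ and $\si'\subset W'$, and applies Lemma \ref{lem2} with the Jacobi function $\langle x,v\rangle$ to conclude that each component of $\si$ and $\si'$ is planar. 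Theorem \ref{thm-nul} enters only in a sub-case (a component of $\si$ bounded by a full diameter would force $M$ to be a disk). Otherwise $\si,\si'\subset D$ with $\si\cap M=\si'\cap M=\gamma_I$, so $\si\cup\si'$ is a surface in $D$ without fixed boundary, hence equals $D$; thus $M\cap D=\gamma_I$ and the remaining components $M_2$ of $M\cap\ba^+$ have empty fixed boundary, contradicting Proposition \ref{prop-simple}(i). This minimization-plus-stability mechanism, and the final gluing $\si\cup\si'=D$, are the steps your proposal lacks.
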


\begin{proof}
If $M$ is an equatorial disk, then the result is trivial. So let us assume this is not the case.

Suppose that, for some equatorial disk $D$, $M\cap \ba^+$ is a disjoint union of two nonempty open surfaces $M_1$ and $M_2$, $M_1$ being connected. Notice that by Proposition \ref{prop-simple}(i) both $M_1$ and (all components of) $M_2$ have non empty fixed boundary and non empty free boundary.

Let us denote by $\Gamma$ the boundary of $M_1,$ which is not necessarily connected. We can write $\Gamma = \gamma_I\cup\gamma_S$, where $\gamma_I$ is its fixed boundary ($int(\gamma_I)\subset int(D)$) and $\gamma_S$ is its free boundary ($\gamma_S\subset \s^2)$. Since $M$ and $D$ are two distinct minimal surfaces, either $M$ and $D$ are transverse or the intersection $M\cap D$ contains at least one $n$-prong singularity (see Lemma \ref{lem.n-prong}).

Observe that, by applying (either the interior or free boundary version) of the maximum principle, we know that $M\cap D$ does not contain any isolated point in $D$ and, by Proposition \ref{prop-simple}(ii), $M\cap D$ does not contain any arc segment in $\s^2$.

Denote by $W$ and $W'$ the closures of the two components of $\ba^3\setminus M.$ They are compact domains with mean convex boundary, and observe that the curve $\Gamma$ is the boundary of an orientable surface contained in them (in fact, $M_1$ is orientable and $M_1\subset W, W'$).  Hence, we can minimize area for the following partially free boundary problem (see Section \ref{reg1}):

We consider the class of admissible currents
\begin{eqnarray*}
\mathfrak{C} = \{T \in \de 2(\rt); \ T \ \mbox{is integer multiplicity rectifiable},\\
\spt T \subset W \ \mbox{and is compact}, \ \mbox{and} \ \spt \bigl([[\gamma_I]] - \p T\bigr) \subset \s^2\cap W\},
\end{eqnarray*}
where $[[\gamma_I]]$ is the current associated to $\gamma_I$, and we minimize area (mass) in $\mathfrak{C}.$ Then, by the results presented in Section \ref{pfb}, we get a compact embedded (orientable) partially free boundary minimal surface $\Sigma\subset W$ with fixed boundary $\gamma_I$ and with possible isolated singularities in $\gamma_I\subset D$ (see Theorem \ref{last} and Remark \ref{last}). Moreover, by Proposition \ref{max.princ} in Section \ref{pfb}, either $\Sigma\subset\p W$ or $\Sigma\cap \p W\subset \alpha.$

%Observe that in the proof of Lemma \ref{lem2}, for the case where $\Sigma$ has isolated singularities, we only use stability away from the disk $D$ and we can get this fact  Now, since we have regularity of $\Sigma$ away from ence, we have regu$\Sigma$ is stable Lemma 2 using the ideas of the proof of Claim 1 to get stability, we have the solution to the variational problem which we proved is regular away from the fixed boundary in Section 3; therefore, we can prove stability, as in Claim 1, ($\epsilon$-)away from the disk.Since we have regularity of $\Sigma$ except possibly at the isolated singularities of $\gamma_I$,  

Arguing as in Claim \ref{claim1} of Theorem \ref{thm-nul}, we can prove the stability of $\Sigma$ away from the disk $D$.  
Since in the proof of Lemma \ref{lem2}, for the case where $\Sigma$ has isolated singularities, we only use stability away from the disk, we can still get the conclusion from Lemma \ref{lem2}, that is, each component of $\Sigma$ is a piece of an equatorial disk.
The case $\si \subset \p W$ can not happen because this would imply that $M$ is a disk, and we are assuming it is not. Therefore, only the second case can happen, that is, any component of $\Sigma$ meets $\p W$ only at points of $\Gamma.$ Observe that each component of $\Sigma$ that is not bounded by a diameter is necessarily contained in $D$. If some component of $\Sigma$ were bounded by a diameter, then we could apply Theorem \ref{thm-nul} and would conclude that $M$ is an equatorial disk, which is not the case. Then $\Sigma$ is entirely contained in $D$ and, since $\Sigma\cap \p W\subset \Gamma$, $M\subset \p W$ and $M\cap D$ does not contain any segment on $\s^2$, we have $\Sigma\cap M=\gamma_I.$

Doing the same procedure as in the last paragraph for $W'$, we can construct another compact surface $\Sigma'$ of $D$ with fixed boundary $\p \Sigma'=\gamma_I$ and such that $\Sigma'\subset W'$ and $\Sigma'\cap M=\gamma_I$. Notice that $\Sigma\cup \Sigma'$ is a surface without fixed boundary of $D$, therefore $\Sigma\cup \Sigma'=D.$ In particular, $M\cap D=\gamma_I,$ which implies that $M_2=M\cap \ba^+\setminus M_1$ has no fixed boundary, a contradiction (by Proposition \ref{prop-simple}(i)). Therefore, the theorem is proved.
\end{proof}

\begin{corollary}
Every embedded compact free boundary minimal surface $M$ of $\ba^3$ either meets or links each diameter.
\end{corollary}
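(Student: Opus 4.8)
The plan is to argue by contradiction: suppose $M$ is an embedded compact free boundary minimal surface which neither meets nor links some diameter $\alpha$. The key observation is that ``$M$ does not link $\alpha$'' means precisely that $M$ is homotopically trivial (relative to $\bo^3$) in $\ba^3\setminus\alpha$, so $M$ bounds in $\ba^3\setminus\alpha$; combined with $M\cap\alpha=\emptyset$ this should let us place $\alpha$ inside one of the two closed regions $W,W'$ determined by $M$ in $\ba^3$. More precisely, $M$ separates $\ba^3$ into two components (this uses that $M$ is two-sided and embedded, which follows from the orientability discussion in the excerpt, or one can appeal to a standard separation argument); if $\alpha$ were to meet both components it would have to cross $M$, contradicting $M\cap\alpha=\emptyset$, so $\alpha$ lies in a single closed region, say $\alpha\subset W$.

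Next I would verify the hypotheses of Theorem \ref{thm-nul} for $W$ with the diameter $\alpha$. The region $W$ is a connected closed region whose boundary is (a subset of) $M$ together with a spherical cap, so $\partial W$ is smooth, mean convex (indeed $M$ is minimal hence mean convex from either side, and the spherical part meets $\s^2$ appropriately), and $\partial W$ meets $\s^2$ orthogonally along $\partial M\subset\s^2$. So it remains to check that $\alpha$ is nullhomologous in $W$ in the sense of Definition \ref{def-nul}, i.e., that there is a compact surface $N\subset W$ with $\partial N=\alpha\cup\gamma$, $\gamma\subset\s^2$. This is exactly where the ``does not link'' hypothesis is used: since $M$ is null-homotopic rel $\bo^3$ in $\ba^3\setminus\alpha$, it bounds there, and by a small perturbation/pushing argument we can produce such a surface $N$ in the closure $W$ of one of the complementary regions with $\partial N\subset\alpha\cup\s^2$; one has to be a little careful to get the fixed-boundary part to be exactly a full diameter $\alpha$ rather than a proper sub-arc, but up to rotating $\alpha$ about the origin (as is done at the start of the proof of Theorem \ref{thm-nul}) one may assume $\alpha$ touches $\partial W$ and then the homological argument gives the required $N$.

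Once these hypotheses are in place, Theorem \ref{thm-nul} applies and forces $W$ to be a closed halfball; but then $\partial W\cap\mathrm{int}(\ba^3)$ is an equatorial disk, and since this is (a component of) $M$ and $M$ is connected we get that $M$ is an equatorial disk $D$. It only remains to observe that an equatorial disk does meet every diameter (every diameter through the origin intersects $D$, since $D$ passes through the origin), contradicting the assumption that $M$ neither meets nor links $\alpha$. This completes the proof.

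The main obstacle, and the step deserving the most care, is the topological reduction: translating ``$M$ does not link $\alpha$'' into ``$\alpha$ is nullhomologous in one of the closed complementary regions of $M$ with the fixed boundary being a full diameter.'' One must (a) justify that $M$ separates $\ba^3$, (b) decide which side $\alpha$ sits on and rule out $\alpha$ crossing $M$, and (c) pass from a nullhomotopy rel $\bo^3$ of $M$ in $\ba^3\setminus\alpha$ to an actual embedded (or at least immersed, which suffices for Definition \ref{def-nul} since only a ``compact surface'' is required) spanning surface for $\alpha\cup\gamma$ inside the closed region $W$. Everything after that is a direct citation of Theorem \ref{thm-nul} plus the trivial remark about equatorial disks meeting diameters.
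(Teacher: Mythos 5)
Your overall strategy is exactly the paper's: assume $M$ misses $\alpha$ and is homotopically trivial rel $\bo^3$ in $\ba^3\setminus\alpha$, show $\alpha$ is nullhomologous in the closed complementary region $W$ containing it, invoke Theorem \ref{thm-nul} to force $W$ to be a halfball and $M$ an equatorial disk, and derive a contradiction with $M\cap\alpha=\emptyset$. The framing steps you list — $M$ separates $\ba^3$, $\alpha$ cannot cross $M$, the whole diameter sits in one closed region — are all fine (and note that since the entire segment $\alpha$ lies in $W$, there is no danger of the fixed boundary being a proper sub-arc; the rotation at the start of the proof of Theorem \ref{thm-nul} is only there to arrange $\alpha\cap\p W\neq\emptyset$ and is internal to that theorem).

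The genuine gap is that the one nontrivial step — producing a compact surface $N\subset W$ with $\p N=\alpha\cup\gamma$, $\gamma\subset\s^2$ — is asserted via ``a small perturbation/pushing argument'' rather than proved, and this step is essentially the entire content of the corollary beyond citing Theorem \ref{thm-nul}. The paper's construction is concrete and you should reproduce something like it. First, the linking hypothesis is used on the sphere: since $M$ is null-homotopic rel $\bo^3$ in $\ba^3\setminus\alpha$, the curves of $\p M\subset\s^2$ are homotopically trivial in $\s^2\setminus\{p,q\}$ (where $p,q$ are the endpoints of $\alpha$), so one can find an arc $\gamma\subset W\cap\s^2$ joining $p$ to $q$; then $\alpha\cup\gamma$ bounds a topological disk $V$ in $\ba^3$. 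If $V\subset W$ you are done; otherwise make $V$ transverse to $M$, let $M_{\epsilon}$ be the boundary (inside $\ba^3$) of a one-sided tubular neighborhood of $M$ on the $W$-side, with $\epsilon$ small enough that $\p M_{\epsilon}\cap(\alpha\cup\gamma)=\emptyset$ and $M_{\epsilon}$ is transverse to $V$. The curves of $M_{\epsilon}\cap V$ bound disjoint open disks $U_1,\dots,U_n$ in the disk $V$, and the surgered surface $\si=M_{\epsilon}\cup\bigl(V\setminus\cup_{i=1}^{n}U_i\bigr)$ is a compact topological surface with $\alpha\subset\si\subset W$ and $\p\si\setminus\alpha\subset\s^2$, which is exactly what Definition \ref{def-nul} requires. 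Without this (or an equivalent) construction, the reduction from ``does not link'' to ``nullhomologous in $W$'' — which you correctly identify as the crux — remains unproved.
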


\begin{proof}
Let $\alpha$ be a diameter with endpoints $p,q$, and suppose $M$ does not meet $\alpha$. Write $\ba^3\setminus M = W\cup W'$, where $W$ contains $\alpha$, and the decomposition is disjoint. Suppose, by contradiction, that $M$ is homotopically trivial (relative to $\bo^3$) in $\ba^3\setminus\alpha$. We will prove that $\alpha$ is nullhomologous in $W$. 

In fact, since $M$ is homotopically trivial in $\ba^3\setminus\alpha$, we have that $\partial M\cap\s^2$ is a finite collection of simple closed curves in $\s^2$ which are homotopically trivial in $\s^2\setminus\{p,q\}$. Hence, there is a curve $\gamma \subset W\cap\s^2$ joining $p$ and $q$; and $\alpha\cup\gamma$ bounds a (topological) disk $V$ in $B^3$. If $V \subset W$, we are done. If that is not the case, by deforming $V$, if necessary, we can suppose that $V$ and $M$ are transverse. Let $N$ be the unit normal vector field to $M$ pointing into $W$, and denote by $M_{\epsilon}$ the intersection of $\ba^3$ with the boundary of a one-sided tubular neighborhood of $M$ (in the direction of $N$) of radius $\epsilon$. We can choose $\epsilon$ small enough such that $\partial M_{\epsilon}\cap(\alpha\cup\gamma) = \emptyset$, and $M_{\epsilon}$ is transverse to $V$. The intersection $M_{\epsilon}\cap V$ consists of a finite number of simple closed curves which bound open discs $U_1,\cdots,U_n$ in $V$ (since $V$ is a disk). The set $\si = M_{\epsilon}\cup\bigl(V\setminus\cup_{i=1}^{n}U_i\bigr)$ is a topological surface with $\alpha \subset \si \subset W$ and $\partial\si\setminus\alpha \subset \s^2$. Therefore $\alpha$ is nullhomologous in $W$.

Then, by Theorem \ref{thm-nul}, we conclude that $W$ is a closed halfball; in particular, $M$ is an equatorial disk. However, this contradicts the fact that $M\cap \alpha=\emptyset$. Therefore, $M$ links $\alpha$ necessarily.
\end{proof}

\section{Solution to a partially free boundary problem}\label{pfb}
\label{sec-current}

\subsection{Terminology}

Let $U \subset \R^{n + k}$ be an open set. We define
$$\mathcal{D}^n(U) = \{C^{\infty}\textrm{-} \ n\textrm{-}\textrm{forms} \ \omega ;\ \spt \ \omega \subset U\}$$
with the usual topology of uniform convergence of all derivatives on compact subsets. Its dual space is denoted by $\de n(U)$ and the elements of $\de n(U)$ are called $n$-currents in $U$. If $T \in \de n(U)$, and $W \subset U$ is open, the mass of $T$ in $W$ is defined by
$$\m_W(T) := \sup \{T(\omega); \ \omega \in \mathcal{D}^n(U), \ \spt \omega \subset W, \ |\omega| \leq 1\} \leq +\infty.$$

The boundary of $T$ is the $(n - 1)$-current $\p T \in \de {n-1}(U)$ given by 
$$\p T(\omega) := T(d\omega),$$
where $d$ denotes the exterior derivative operator.

Given a sequence $\{T_j\}_{j \in \na}$ in $\de n(U)$, we say that $T_j$ converges to $T \in \de n(U)$ as $j \to \infty$, if
$$T_j(\omega) \to T(\omega), \ \mbox{as} \ j \to \infty, \ \forall \ \omega \in \mathcal{D}^n(U).$$ 

Let $\mathcal{H}^n$ denote the $n$-dimensional Hausdorff measure. A set $M \subset \R^{n+k}$ is called countably $n$-rectifiable if $M$ is $\mathcal{H}^n$-measurable and if
$$M \subset \bigcup_{j = 0}^{\infty} M_{j},$$
where $\mathcal{H}^n(M_0) = 0$ and for $j \geq 1$, $M_j$ is an $n$-dimensional $C^1$-submanifold of $\R^{n+k}$. Such $M$ possesses $\mathcal{H}^n$-a.e. an approximate tangent space $T_{x}M$.

A current $T \in \de n(U)$ is called integer multiplicity rectifiable, if
$$T(\omega) = \int_{M}\langle \omega,\xi\rangle\theta \ d\mathcal{H}^n, \ \ \omega \in \mathcal{D}^n(U),$$
where $M \subset U$ is countably $n$-rectifiable, $\theta \geq 0$ is a locally $\mathcal{H}^n$-integrable integer valued function and,  for $\mathcal{H}^n$-a.e. $x \in M$, $\xi(x) = e_1\wedge\cdots\wedge e_n$, where $\{e_1,\cdots,e_n\}$ is an orthonormal basis of the approximate tangent space $T_{x}M$. In this case, we write $T = \tau(M,\theta,\xi)$. Also, we denote by $\mu_{T} = \mathcal{H}^n\mres \theta$ the Radon measure induced by the current $T$.

An $n$-varifold in $U$ is a Radon measure on $G_{n,k}(U) := U\times G(n+k,n)$, where $G(n+k,n)$ is the Grassmannian of $n$-hyperplanes in $\mathbb{R}^{n + k}$. An integer multiplicity rectifiable $n$-varifold $\mathcal{V} = v(M,\theta)$ is defined by
$$\mathcal{V}(f) = \int_{M}f(x,T_x M) \theta(x)\ d\mathcal{H}^n, \ f \in C_{c}\left(G_{n,k}(U),\mathbb{R}\right),$$
where $M \subset U$ is countably $n$-rectifiable and $\theta \geq 0$ is a locally $\mathcal{H}^n$-integrable integer valued function. In particular, given an integer multiplicity rectifiable current, forgetting the orientation we have an associated integer multiplicity rectifiable varifold. Also, for $\mathcal{V} = v(M,\theta)$ we can define the first variation $\delta \mathcal{V}$ (see \cite{Sim}[chapter 4]), and for any $C^1$-vector field $\zeta$, it holds {\it the first variation formula}
\begin{equation}\label{FVF}
\delta \mathcal{V}(\zeta) = \int_{M}\diver_M \zeta \ d\mu_{\mathcal{V}}.
\end{equation}

\subsection{Minimizing Currents with Partially Free Boundary}
\label{reg1}
Consider a compact domain $W \subset \rt$ such that $\partial W = S\cup M$, where $S$ is a compact $C^2$ surface (not necessarily connected) with boundary, $M$ is a smooth, compact mean convex surface with boundary, which intersects $S$ ortogonally along $\partial S$, and $\mathring{S}\cap\mathring M = \emptyset$ (here $\mathring A$ denotes the topological interior of $A$). Let $\gamma$ be a compact $C^2$-curve which is contained in $W$ and such that $\gamma\cap S$ is either empty or consists of a finite number of points. We shall call $\gamma$ the fixed boundary and the points of $\gamma\cap S$ by corners.

Define the class $\mathfrak{C}$ of admissible currents by
\begin{eqnarray*}
\mathfrak{C} = \{T \in \de 2(\rt); \ T \ \mbox{is integer multiplicity rectifiable},\\
\spt T \subset W \ \mbox{and is compact}, \ \mbox{and} \ \spt \bigl([[\gamma]] - \p T\bigr) \subset S\},
\end{eqnarray*}
where $[[\gamma]]$ is the current associated to $\gamma$. We want to minimize area in $\mathfrak{C}$, that is, we are looking
for $T \in \mathfrak{C}$ such that
\begin{equation}\label{var.prob}
\m(T) = \inf\{\m(\tilde{T}); \ \tilde{T} \in \mathfrak{C}\}.
\end{equation}

The existence of the fixed boundary ensures that $\mathfrak{C} \neq \emptyset$. It follows from ~\cite[$5.1.6(1)$]{Fed}, that the variational problem \eqref{var.prob} has a solution (see also \cite{Gr1}). If $T \in \mathfrak{C}$ is a solution we have
\begin{eqnarray}
\m(T) &\leq & \m(T + X) \label{min.property},\\
\spt T &\subset & W,\\
\mu_T (S) &=& 0,
\end{eqnarray}
for any integer multiplicity current $X \in \de 2(\rt)$ with compact support such that $\spt X \subset W$ and $\spt \p X \subset S$.

In order to apply the known regularity theory for $T$ we need the following results.

\begin{proposition}\label{max.princ}
If $T$ is a solution of \eqref{var.prob}, then either $\spt T\setminus\gamma \subset W \setminus M$ or $M \subset \spt T$. %In particular, $\spt (T - [[\gamma]])\cap S\subset S\setminus \p S$.
\end{proposition}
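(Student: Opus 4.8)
The plan is to read this as a strong maximum principle and argue by contradiction: supposing $\spt T\setminus\gamma$ is \emph{not} contained in $W\setminus M$, I would pick a point $p\in(\spt T\setminus\gamma)\cap M$ and show that this forces $M\subset\spt T$. The two ingredients are (a) the strong maximum principle for area--minimizing currents — equivalently, for their associated stationary varifolds — touching the (non-strictly) mean convex barrier $M$ from the side of $W$, and (b) a connectedness argument on $M$. Throughout one keeps in mind that $\spt T\subset W$ is closed and that, away from $S$, one has $\p T=[[\gamma]]$, so already $\gamma\cap\mathring{M}\subset\spt T$; here $\mathring{M}=M\setminus\p M$ and $\p M=S\cap M$.

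The first step is the case $p\in\mathring{M}$. Since $T$ minimizes mass in $\mathfrak{C}$, its associated varifold $\mathbf{V}$ has nonnegative first variation for every $C^1$ vector field that is compactly supported near $\mathring{M}$, is supported away from $\gamma$, and whose flow maps $W$ into itself; in particular $\mathbf{V}$ is stationary for deformations tangent to $M$ along $M$. As $M\subset\p W$ is mean convex, its mean curvature vector points into $W$, so $M$ is a one--sided barrier that $\spt T$ touches at the interior point $p$. By the strong maximum principle in this setting, $\spt T$ must contain a relative neighbourhood of $p$ in $M$. Running this at every point of $(\spt T\setminus\gamma)\cap\mathring{M}$ shows that this set is relatively open in $\mathring{M}\setminus\gamma$; it is trivially relatively closed there; and since $M$ is connected and $\gamma$ meets $\mathring{M}$ in a set that does not disconnect it (finitely many points in the situations where the proposition is applied), $\mathring{M}\setminus\gamma$ is connected and meets $\spt T$ at $p$. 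Hence $\mathring{M}\setminus\gamma\subset\spt T$, and taking closures, $M=\overline{\mathring{M}\setminus\gamma}\subset\overline{\spt T}=\spt T$.

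The second step handles $p\in\p M=S\cap M$. Since $p\notin\gamma$, a neighbourhood of $p$ avoids $\gamma$, so there $T$ is an area--minimizing current with free boundary on the regular surface $S$ and with no fixed boundary, touching $M$ at $p$. Because $M$ meets $S$ orthogonally, I would pass to Fermi coordinates adapted to $S$ and reflect across $S$: the doubled current $T+\overline{T}$ is area--minimizing in the reflected region, the reflected barrier $M\cup\overline{M}$ is mean convex and $C^{1,1}$ across the seam $\p M$, and $p$ is now an \emph{interior} point of that barrier lying in $\spt(T+\overline{T})$. The interior statement from the first step then produces points of $\mathring{M}$ in $\spt T$ arbitrarily close to $p$, reducing matters to the first case. (Alternatively one invokes a boundary version of the strong maximum principle at a free--boundary point directly.)

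I expect the real difficulty to be the analytic underpinnings of the first step together with the corner analysis of the second. One must apply the strong maximum principle to a current that near $\p W$ is only stationary in a one--sided sense and is a priori possibly singular, and in the merely weakly mean convex case — where the conclusion is coincidence on a relatively open set rather than non--contact; and one must make the reflection across $S$ rigorous, which is precisely where the orthogonality of $M$ and $S$ is used, while checking that neither the corner $\p M$ nor the fixed boundary $\gamma$ obstructs the propagation of the contact set along $M$.
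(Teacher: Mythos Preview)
Your interior step is essentially the paper's: the associated varifold is one--sided stationary near $M$ away from $\gamma$, and the Solomon--White strong maximum principle \cite{SolWh} gives the dichotomy directly (the paper cites this in one line rather than phrasing it as an open--closed argument, but the content is the same).

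The boundary step is where you and the paper diverge. The paper does \emph{not} reflect across $S$. Instead, following \cite[Lemma A.1--A.2]{Wan}, it builds near $p\in\partial M\setminus\gamma$ a local foliation of $W$ by $C^{2,\alpha}$ graphs over $M$ with small constant mean curvature $s>0$ that meet $S$ orthogonally (this uses the implicit function theorem and the orthogonality of $M$ and $S$), arranges the bottom leaf to lie below $\spt T$ along the fixed part of its boundary, and then slides up to a first contact. That contact is then ruled out by the interior maximum principle of \cite{Whi2} and the free--boundary maximum principle of \cite{LiZhou}, both applied to the one--sided stationary varifold against a strictly mean convex leaf. The virtue of this route is that every ingredient is an existing result applied in its native setting: smooth barrier, exact (one--sided) stationarity, free boundary handled by \cite{LiZhou}.

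Your reflection idea is natural---and indeed the paper uses exactly this reflection later, in Section~\ref{reg2}, for regularity at the corners---but as a proof of the maximum principle it carries the burden you flag and do not discharge: after reflection the current is only \emph{almost} minimizing (cf.\ inequality \eqref{ineq.mass}) and the doubled barrier $M\cup\Phi(M)$ is only $C^{1,1}$, so you would need a Solomon--White type statement in that weaker regularity/almost--stationary regime. That is plausible but not in the cited literature; the paper's foliation argument sidesteps it entirely. Your parenthetical ``alternatively one invokes a boundary version of the strong maximum principle at a free--boundary point directly'' is in fact precisely what the paper does, via \cite{LiZhou} together with the CMC foliation.
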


\begin{proof}
The proof follows the same ideas as in ~\cite[Lemma A.1]{Wan}, so we only describe the construction needed and refer the reader to \cite{Wan} for the details. Denote $\Sigma = \spt T$ and let $V$ be the varifold associated to $T$. Let $\zeta$ be a $C^1$ vector field with compact support on $W$ such that $\zeta(x) \in T_x S$ for any $x \in S$, and $\langle\zeta,\nu_M\rangle\geq 0$ on $M$, where $\nu_M$ is the inward pointing unit normal of $M$. By \eqref{min.property} we have
\begin{equation}
\delta V(\zeta) \geq 0.
\end{equation}

Suppose that $\Sigma$ does not contain $M$. By the main result in \cite{SolWh}, $\Sigma$ does not intersect the interior of $M$ (the result in \cite{SolWh} is stated in the case where $M$ is a minimal surface and $V$ is stationary but, as remarked at the end of the paper, the proof also works in our more general situation). 

Suppose there is a point $p \in (\partial M\setminus\gamma)\cap \si$. Let $F: M\times(-\delta,\delta) \to W$ be a diffeomorphism which is associated with an extension of $\nu_M$. By ~\cite[Lemma A.2]{Wan}, there exist $\epsilon > 0$ and a neighborhood $U \subset W$ of $p$ such that, for any $0 < s < \epsilon$ and any non negative function $w: M\cap U \to \mathbb{R}$ with $\|w\|_{C^{2,\alpha}} < \epsilon$, there exist $C^{2,\alpha}$ functions $v_t: M\cap U \to \mathbb{R}$, $t \in (-\epsilon,\epsilon)$, satisfying the following:
\begin{enumerate}
\item the graphs of $v_t$ foliate $(M\cap U)\times (-\epsilon,\epsilon)$,
\item $\textrm{graph}(v_t)$ meets $\partial W$ orthogonally along $\partial M\cap U$, $\forall \ t \in (-\epsilon,\epsilon)$,
\item $\forall \ t \in (-\epsilon,\epsilon)$, $\textrm{graph}(v_t)$ has mean curvature equal to $s$ (with respect to the downward pointing normal vector of the graph),
\item $v_t(q) = w(q) + t,$ for any $q \in \beta := \partial (M\cap U)\cap\mathring{W}$.
\end{enumerate}
In \cite{Wan}, $M$ is a minimal surface; however, the construction relies on the implicit function theorem and uses ~\cite[Appendix]{Whi} and ~\cite[Section 3]{ACS2}, with a small modification on the map needed, thus it also works if $M$ is mean-convex.

Moreover, we can choose $s$ small enough such that $v_0(p) > 0$ and choose $w$ so that if $F(q,r) \in \si$ for $q \in \beta$, then $r \geq w(q)$. Let $t_0$ be the smallest $t$ so that $v_t$ intersects $\si$. Then $t_0 < 0$ necessarily, which implies that $\si$ does not intersect $\textrm{graph}(v_{t_0}|_{\beta})$; hence, the intersection occurs at the interior or at the free boundary of $\si$. However, observe that by construction $\textrm{graph}(v_{t_0})$ has mean curvature vector pointing towards $\si$ so, by the main results of \cite{Whi2} and \cite{LiZhou}, the graph of $v_{t_0}$ can not intersect $\si$ neither at the interior nor at the free boundary, which is a contradiction. Although in \cite{LiZhou} the varifold is assumed to be stationary, the proof is by contradiction and relies on the construction of a vector field $\zeta$ as above such that $\delta V(\zeta) < 0$; hence, it also works on our case.
\end{proof}

\begin{figure}[!h]
\includegraphics[scale=0.4]{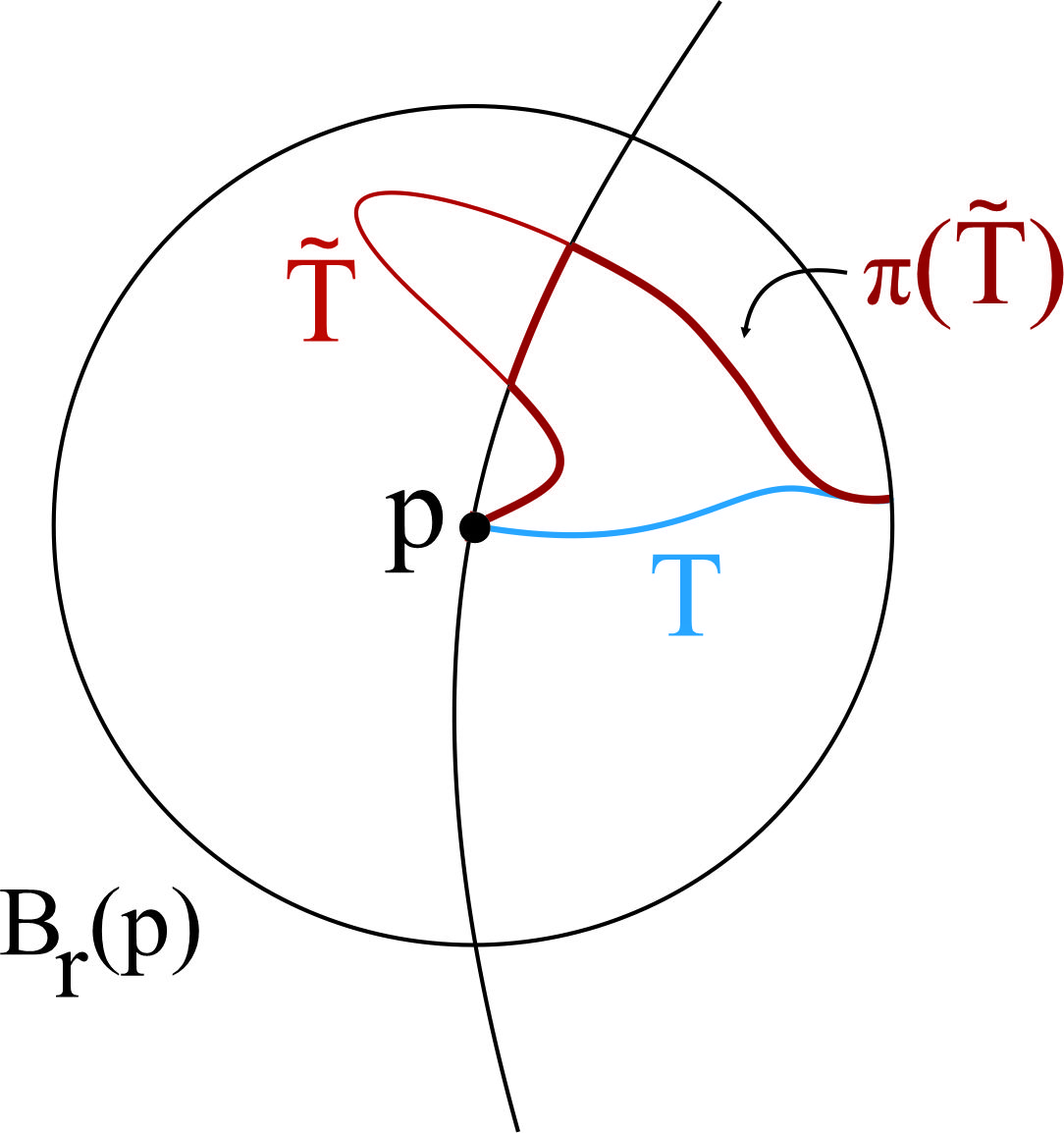}
\caption{Picture in one dimension less. Here $p\in \mbox{int}(\gamma)\cap M $.}
\label{fig-ball}
\end{figure}

\begin{proposition}\label{edelen}
Let $p\in \mbox{int}(\gamma)\cap M$. If $T$ is a solution of \eqref{var.prob}, then there exists a uniform constant $C'$ such that for $r > 0$ sufficiently small we have
$${\bf M}_{B_r(p)}(T)\leq (1+C'r){\bf M}_{B_r(p)}(T + X),$$
for any $X \in \de 2(\rt)$ such that $\spt X \subset B_r(p)$ and is compact, and $\partial X = 0$. 
\end{proposition}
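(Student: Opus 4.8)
The plan is to establish an almost-minimizing property of $T$ near an interior point $p$ of $\gamma$ lying on $M$, by transplanting any competitor that is supported in a small ball $B_r(p)$ and whose boundary vanishes into an honest competitor for the constrained problem \eqref{var.prob}. The issue is that a priori $T+X$ need not lie in the admissible class $\mathfrak{C}$: even though $\partial X = 0$ (so $\spt([[\gamma]]-\partial(T+X))=\spt([[\gamma]]-\partial T)\subset S$ is preserved), the support of $T+X$ may stick out of $W$ through $M$, since $p\in M$. The remedy, following the standard device used for minimizers with a mean-convex obstacle, is to push $X$ back inside $W$ using the nearest-point retraction onto $W$ near $M$, which is Lipschitz with Lipschitz constant controlled by the geometry of $M$ (here is where the mean convexity / the factor $(1+C'r)$ enters, via the second fundamental form of $M$).

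First I would fix normal coordinates near $p$ and choose $r_0>0$ small enough that: the nearest-point projection $\pi\colon \mathcal{N}\to W$ from a tubular neighborhood $\mathcal{N}$ of $\partial W$ onto $W$ is well defined and $C^1$; and $B_{2r_0}(p)\subset\mathcal N$ and $B_{2r_0}(p)\cap S=\emptyset$ (possible since $p\in\mathrm{int}(\gamma)$, so $p$ is away from the corners and, as $\gamma\cap S$ is a finite point set with $p\notin S$, away from $S$). On $B_{r_0}(p)$ one has the estimate $\mathrm{Lip}(\pi)\le 1+C'r$ on $B_r(p)$ for $r\le r_0$, with $C'$ depending only on $\sup_M|A_M|$; this is the place where I expect the only real work, namely writing $M$ locally as a graph, computing the Jacobian of $\pi$ and getting the linear-in-$r$ bound — but this is routine differential geometry and is exactly the computation behind Edelen's and Grüter's almost-minimizing estimates.

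Then, given any $X\in\de 2(\rt)$ with $\spt X\subset B_r(p)$ compact and $\partial X=0$, set $T' = \pi_{\#}(T+X)$. Since $T$ itself already has $\spt T\subset W$ and $\pi|_W=\id$, and since $\pi$ fixes a neighborhood of $\partial B_r(p)$ outside $B_r(p)$, one checks that $T'$ agrees with $T$ outside $B_r(p)$, that $\spt T'\subset W$ is compact, that $T'$ is integer multiplicity rectifiable, and that $\partial T' = \pi_\#\partial(T+X) = \pi_\#\partial T = \partial T$ (using $\partial X=0$ and $\pi_\#\partial T=\partial T$ because $\spt\partial T\subset \gamma\cup S\subset W$); hence $\spt([[\gamma]]-\partial T')=\spt([[\gamma]]-\partial T)\subset S$, so $T'\in\mathfrak C$. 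By minimality \eqref{min.property}, $\m(T)\le\m(T')$, and since $T$ and $T'$ coincide off $B_r(p)$ this gives ${\bf M}_{B_r(p)}(T)\le{\bf M}_{B_r(p)}(T')$.

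Finally I would estimate ${\bf M}_{B_r(p)}(T')$. Because $\pi$ is the identity on $W$ and pushing forward by a Lipschitz map multiplies mass by at most the (appropriate power of the) Lipschitz constant on the relevant set, and $\pi(B_r(p))\subset B_r(p)$, we get
\begin{equation*}
{\bf M}_{B_r(p)}(T') = {\bf M}_{B_r(p)}\bigl(\pi_\#(T+X)\bigr)\le \bigl(\mathrm{Lip}(\pi|_{B_r(p)})\bigr)^2\,{\bf M}_{B_r(p)}(T+X)\le (1+C'r)^2\,{\bf M}_{B_r(p)}(T+X),
\end{equation*}
and after relabelling $C'$ (absorbing the square into a larger constant, valid for $r\le r_0$) this is exactly the claimed inequality ${\bf M}_{B_r(p)}(T)\le(1+C'r){\bf M}_{B_r(p)}(T+X)$. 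The one point to double-check is that no mass is created on $S$ or lost to $\partial B_r(p)$ in the surgery; this follows from $B_r(p)\cap S=\emptyset$ and from $\pi$ being the identity near $\partial B_r(p)\cap W$, so the localization to $B_r(p)$ is clean. The main obstacle, as noted, is purely the quantitative Lipschitz estimate for the nearest-point retraction onto the mean-convex region $W$ near $M$, giving the linear factor $1+C'r$; everything else is bookkeeping with currents.
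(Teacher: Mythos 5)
Your proposal is correct and follows essentially the same route as the paper: the paper's proof also pushes the competitor $T+X$ back into $W$ via the nearest-point projection $\pi$ onto $W$ near $M$, uses the bound $|D\pi(q)|\le 1+Cr$ on $B_r(p)$, and absorbs the resulting factor $(1+Cr)^2$ into $(1+C'r)$. Your additional bookkeeping (admissibility of $\pi_\#(T+X)$, localization to $B_r(p)$) is exactly what the paper leaves implicit.
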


\begin{proof}
To simplify the notation let us write $T$ to denote $T\cap B_r(p).$

Denote by $\pi$ the nearest point projection onto $W$. Observe that in a piece of a tubular neighborhood of $M$ containing $p$ the map $\pi$ is well defined, piecewise smooth and Lipschitz. Consider $r > 0$ such that $\pi$ is well defined in $B_r(p)$. Observe that we can find a constant $C > 0$ (independent of $r$) such that
$$
|D\pi(q)|\leq 1+Cr, \  \mbox{for a.e.} \  q\in B_r(p).
$$

Let $X$ be as in the statement of the proposition. Denote $\tilde{T} = T + X$ (see Figure \ref{fig-ball}). Then,
\begin{equation}\label{ineq.alm.min}
{\bf M}(T)\leq {\bf M}(\pi(\tilde T)) \leq (1+Cr)^2 {\bf M}(\tilde T) \leq (1+C'r){\bf M}(\tilde T),
\end{equation}
for some constant $C'$. So the proof is complete.
\end{proof}

We then have the following regularity result.

\begin{theorem}
Let $T$ be a solution of \eqref{var.prob}. Then, away from the corners, $T$ is supported in a connected oriented embedded minimal $C^2$-surface, which meets $S$ orthogonally along $\spt ([[\gamma]] - \p T\bigr)$. 
\label{thm.reg1}
\end{theorem}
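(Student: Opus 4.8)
The plan is to split the regularity statement into three regimes --- interior points of $\spt T$, free boundary points on $S$ (away from corners and away from $M$), and points on $M$ --- and to invoke the known regularity theory at each, piecing together the global conclusion (embeddedness, connectedness, orientability, $C^2$ smoothness, orthogonal intersection with $S$) afterwards. First I would use Proposition \ref{max.princ}: either $M\subset\spt T$, or $\spt T\setminus\gamma\subset W\setminus M$. In the former case the regularity near $M$ is automatic since $M$ is a smooth surface; in the latter case (the interesting one) the support stays away from $\mathring M$, so near each point of $\spt T\setminus\gamma$ the current is either an interior minimizer or a minimizer with free boundary on the $C^2$ surface $S$, meeting $S$ orthogonally. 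For interior points, the current $T$ is mass-minimizing among integer multiplicity rectifiable $2$-currents in $\rt$ with the given boundary constraint, so by the classical interior regularity theory for area-minimizing currents in dimension $2$ in $\R^3$ (Fleming--De Giorgi--Almgren, as in \cite{Sim,Fed}), $\spt T$ is, away from $\spt([[\gamma]]-\p T)$, a smooth embedded minimal surface; in ambient dimension $3$ there are no interior singularities.

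The free boundary points on $S$ require the partially free boundary regularity theory. Here I would appeal directly to \cite{Gr2} (Gr\"uter) and \cite{HarSim} (Hardt--Simon) for boundary regularity of area-minimizing currents with a free (Neumann-type) boundary on the fixed $C^2$ obstacle $S$: these give that $\spt T$ is a $C^{1,\alpha}$ --- in fact, since $S$ is $C^2$ and $T$ is minimal, $C^2$ up to and including $S$ --- embedded surface near such points, meeting $S$ orthogonally along $\spt([[\gamma]]-\p T)$. The orthogonality is exactly the natural (Neumann) boundary condition coming from the first variation formula \eqref{FVF} with vector fields tangent to $S$. For the points on $\mathring M$: in the case $M\not\subset\spt T$ these simply do not occur by Proposition \ref{max.princ}, and this is why that proposition was proved first; in the case $M\subset\spt T$, $M$ itself is the smooth surface supporting $T$ locally, so there is nothing to prove. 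The points of $\p M\setminus\gamma$, which lie on both $S$ and the mean convex barrier $M$, are handled by the combination: $\spt T$ cannot cross $M$ (barrier), so near such a point either $\spt T\subset M$ locally, or $\spt T$ stays in $W\setminus M$ and is a free boundary minimizer on $S$ whose support touches $\p M$ --- in the latter case one argues the touching set is forced into $S$ and applies the $S$-free-boundary regularity. Connectedness follows from a standard argument: if $\spt T$ had two components, one could reduce mass by discarding a component while keeping the fixed boundary $\gamma$ in the support, contradicting minimality (since $\gamma$ lies in a single connected piece, or using that $\gamma\cap S$ being finite forces the component carrying $[[\gamma]]$ to be the only one --- a short separate check I would include). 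Orientability is inherited from $T$ being an integer multiplicity rectifiable current with a chosen orienting $2$-vector $\xi$, which is locally consistent on the smooth support.

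The main obstacle I anticipate is the bookkeeping at the triple-type points of $\p M\setminus\gamma$ where the fixed obstacle $S$, the mean convex barrier $M$, and $\spt T$ could all meet: one must rule out $\spt T$ approaching $\p M$ tangentially from inside $W$ in a way that is not covered by the off-the-shelf $S$-free-boundary regularity. I would handle this by combining the maximum principle of \cite{SolWh} (no interior contact with $M$) with the strong maximum principle / Hopf-type statements of \cite{Whi2} and \cite{LiZhou} at $\p M$, exactly as in the proof of Proposition \ref{max.princ}, to show that either $M$ is locally contained in $\spt T$ (smooth case) or $\spt T$ is locally a free boundary minimizer on $S$ not touching $M$, and then invoke \cite{Gr2,HarSim,Ed} for the latter. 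Everything else is a citation of established regularity results applied to the respective local model; the theorem statement deliberately excludes the corners $\gamma\cap S$ and the singular points of $\gamma$, which are treated separately (the corners in Section \ref{sec-current}, Theorem C).
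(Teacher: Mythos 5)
There is a genuine gap: your proposal never addresses regularity at the \emph{fixed} boundary $\gamma$ away from the corners, which is part of the claim and is the delicate case here. You split $\spt T$ into interior points, free boundary points on $S$, and points on $M$, but the points of $\gamma$ itself fall through the cracks. Two sub-cases are needed. For $x\in\mathrm{int}(\gamma)\cap\mathring W$ one invokes Hardt--Simon \cite{HarSim} (which is a \emph{fixed}-boundary, oriented-Plateau regularity theorem --- you cite it instead as a free-boundary result on $S$, which is a misattribution; the free boundary on $S\setminus\partial S$ is handled by Gr\"uter \cite{Gr2} alone, using Proposition \ref{max.princ} to keep it away from $\partial S$). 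For $x\in\mathrm{int}(\gamma)\cap M$ neither Hardt--Simon nor the classical theory applies directly, because competitors are constrained to lie in $W$ and $T$ is only \emph{almost} mass-minimizing near such points, with error factor $(1+C'r)$; this is exactly the content of Proposition \ref{edelen}, which you never use, and the regularity there requires Edelen's results for almost-minimizers \cite{Ed}.

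Your assertion that points of $\spt T$ on $\mathring M$ ``simply do not occur by Proposition \ref{max.princ}'' is incorrect as stated: that proposition only gives $\spt T\setminus\gamma\subset W\setminus M$, so $\gamma\cap\mathring M$ can be (and in both applications of this theorem \emph{is}) nonempty --- in Theorems \ref{thm-nul} and \ref{thm-main} the fixed boundary $\alpha$, resp.\ $\gamma_I$, lies entirely on the mean convex barrier. This is the whole reason Proposition \ref{edelen} is stated for $p\in\mathrm{int}(\gamma)\cap M$. Without the almost-minimizing estimate and the appeal to \cite{Ed}, your argument does not establish regularity along the fixed boundary in the situation the theorem is actually used for. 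The rest of your outline (interior De Giorgi regularity with $n=2<7$, Gr\"uter on $S$, orthogonality from the first variation, connectedness by discarding a component) matches the paper's proof.
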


\begin{proof}
From the classical interior regularity theory developed by DeGiorgi (here $n = 2 < 7$), see \cite{Fed},  we know that in a neighborhood of each $x \in \spt T \setminus \spt \p T$, T is given by ($m$-times, $m \in \na$) integration over an embedded minimal surface. 
The regularity near a point $x$ at the fixed part of the boundary away from the corners follows from the work of Hardt and Simon \cite{HarSim} on the case $x \in \mathring W$, and for the case $x \in M$ we can use, by Proposition \ref{edelen} and Remarks 0.2 and 0.3 in \cite{Ed}, the results in \cite{Ed} (let us remark that in the proof of Theorem \ref{thm.reg2} we will get to this same situation after a reflection and we will give more details on how to use the results in \cite{Ed}). Since by Proposition \ref{max.princ} the free part of the boundary is contained in $S\setminus \p S$, we can use the result by Gr\"uter \cite{Gr2} to conclude the regularity at the free boundary (away from the corners). Therefore, away from the corners, $T$ is supported in a connected oriented embedded minimal $C^2$-surface, which meets $S$ orthogonally along $\spt ([[\gamma]] - \p T\bigr)$.
\end{proof}

%\begin{remark}Observe that here we are assuming that $\gamma$ is a $C^2$ curve. In the case $\gamma$ has isolated singularities (for example, $n$-prong singularities), the same result holds true away from the corners and these singularities. \label{rem2}\end{remark}

It remains the question about the regularity of $T$ at the corners. In \cite{Gr3}, Gr\"uter reports joint work with L. Simon (unpublished) where they would prove regularity in a similar situation. We develop here the ideas present in \cite{Gr3} to prove the regularity for the case where $\gamma$ meets $S$ orthogonally. 

Arguing as in Section $3$ of \cite{Gr2}, we can reduce the problem of local regularity at a corner to the following situation. Applying a translation and a dilation if necessary we can suppose one of the corners is located at the origin $\ze \in \rt$ and the open ball $B_{3}(\ze)$  (centered at the origin with radius 3) is decomposed by $\partial W = M\cup S$ into two open $3$-cells, that is
\begin{equation}\label{cells}
B_3(\ze) = B_3^{-}\cup\bigl(\partial W\cap B_3(\ze)\bigr)\cup B_3^{+},
\end{equation}
where $B_3^{-}$ and $B_3^{+}$ are homeomorphic to the $3$-dimensional unit ball and the decomposition is disjoint. 

Consider a rectifiable  $T \in \de 2\bigl(B_{3}(\ze)\bigr)$ of integer multiplicity satisfying: 
\begin{eqnarray}
\spt T \subset \overline {B_3^{+}}, \ \ze \in \spt T,\\
\spt \bigl([[\gamma]] - \p T\bigr) \subset S, \\
\m(T) < +\infty, \\
%\theta_T = 1, \ \mu_{T}-\textrm{a.e}, \label{one.a.e} \\
\m _U(T) \leq \m _U(T + X), \label{min}
\end{eqnarray}
for every open set $U \subset\subset B_{3}(\ze)$ and for any integer multiplicity current $X \in \de 2\bigl(B_{3}(\ze)\bigr)$ such that $\spt X \subset U\cap W$ and $\spt \p X \subset S$. It also holds $\mu_T (S\cap U) = 0$, for every open set $U \subset\subset B_{3}(\ze)$.

\subsection{Regularity at the corner}
\label{reg2}

Extend $S$ to a closed smooth surface $\widetilde{S} \subset \mathbb{R}^3$ such that a tubular neighborhood of $\widetilde{S}$ contains $B_3(\ze)$ (applying a dilation if necessary). Define the reflection $\Phi: B_3(\ze) \to \rt$ across $\widetilde{S}$ by
$$\Phi(y) = 2\Pi(y) - y,$$
where $\Pi(y)$ is defined as the unique point in $\tilde S$ such that $\dis (y,\tilde S) = |y - \Pi(y)|$. Since our questions are local, we can ensure that $\Pi$ is well defined (after a dilation if necessary) and continuously differentiable. Geometrically we can see $\Phi$ as follows: the line through $x = \Pi(y)$ with direction $\xi(x)$ (a unit normal vector to $\tilde S$ at $x$) is parametrized by $t\mapsto x + t\xi(x)$, so if $y = x + t\xi(x)$, we have $\Phi(y) = x - t\xi(x)$. It is easy to see that $\Phi^2 = \id$.

Define $T' = T - \Phi_{\#}(T)$. Thus $T' \in \de 2(\rt)$ has integer multiplicity, $\spt \p T' \subset \gamma\cup\Phi(\gamma)$ and
\begin{equation}\label{V}
\spt T' \subset B_3^{+}\cup\bigl(\partial W\cap B_3(\ze)\bigr)\cup \Phi(B_3^{+}).
\end{equation}
Moreover, $\m(T') < +\infty$. Denote $B = B_1(\ze)$ and $T'\mres B$ by $\ti$. Hence, $\ti \in \de 2(B)$ has integer multiplicity, finite mass and the support of its boundary is contained in $\gamma\cup\Phi(\gamma)$. 

Now we will prove that $\ti$ is regular at $\ze$ and, of course, this implies the regularity of $T$ at $\ze$. For this purpose, we will first adapt some ideas of \cite{Gr2} to show that $\ti$ has a tangent cone at $\ze$ which is area-minimizing.

\begin{lemma}\label{der}
Consider $y = x + r\xi(x)$, where $x \in \widetilde{S}$ and $\xi(x)$ is a unit normal vector to $\tilde S$ at $x$. Then, for $r$ small enough, the derivative of $\Phi$ satisfies
\begin{equation}
1 - c_1 r \leq |D\Phi(y)| \leq 1 + c_1 r,
\end{equation}
where $c_1$ is a positive constant.
\end{lemma}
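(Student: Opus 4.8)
The plan is to compute $D\Phi$ explicitly in Fermi coordinates adapted to $\widetilde S$, where the formula $\Phi(y) = 2\Pi(y) - y$ becomes a simple flip in the normal direction. First I would introduce the smooth map $G(x,t) := x + t\,\xi(x)$, defined on a neighborhood of $\widetilde S \times \{0\}$ inside $\widetilde S \times \mathbb{R}$; after shrinking the tubular neighborhood if necessary (which is allowed, since the question is local), $G$ is a diffeomorphism onto a neighborhood of $\widetilde S$ containing $B_3(\ze)$, whose inverse is $y \mapsto (\Pi(y), \pm\dis(y, \widetilde S))$. In these coordinates the reflection reads $\Phi = H \circ G^{-1}$ with $H(x,t) := x - t\,\xi(x)$, hence for $y = x + r\,\xi(x)$ we have $D\Phi(y) = DH(x,r) \circ \bigl(DG(x,r)\bigr)^{-1}$ as linear maps on $\mathbb{R}^3$.

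The key step is then to diagonalize $DG(x,r)$ and $DH(x,r)$. Fixing $x \in \widetilde S$, let $e_1, e_2$ be an orthonormal basis of $T_x\widetilde S$ consisting of principal directions with principal curvatures $\kappa_1, \kappa_2$, so that the Gauss map satisfies $D_{e_i}\xi = -\kappa_i e_i$. A direct computation of differentials along curves gives $DG(x,r)\,e_i = (1 - r\kappa_i)\,e_i$ and $DG(x,r)\,\partial_t = \xi(x)$, while $DH(x,r)\,e_i = (1 + r\kappa_i)\,e_i$ and $DH(x,r)\,\partial_t = -\xi(x)$. Since $\{e_1, e_2, \xi(x)\}$ is an orthonormal basis of $\mathbb{R}^3$ in which $D\Phi(y)$ is therefore diagonal, with entries $\tfrac{1+r\kappa_1}{1-r\kappa_1}$, $\tfrac{1+r\kappa_2}{1-r\kappa_2}$ and $-1$, the singular values of $D\Phi(y)$ are $\bigl|\tfrac{1+r\kappa_1}{1-r\kappa_1}\bigr|$, $\bigl|\tfrac{1+r\kappa_2}{1-r\kappa_2}\bigr|$ and $1$.

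Finally, since only the portion of $\widetilde S$ near $B_3(\ze)$ matters and $\widetilde S$ is smooth and compact, the principal curvatures are uniformly bounded, say $|\kappa_i| \le \kappa_0$; for $r \le (2\kappa_0)^{-1}$ one writes $\bigl|\tfrac{1+r\kappa_i}{1-r\kappa_i}\bigr| = \bigl|1 + \tfrac{2r\kappa_i}{1-r\kappa_i}\bigr|$ and bounds $\bigl|\tfrac{2r\kappa_i}{1-r\kappa_i}\bigr| \le 4\kappa_0 r$, so that every singular value of $D\Phi(y)$ lies in $[1 - 4\kappa_0 r,\, 1 + 4\kappa_0 r]$; in particular $1 - c_1 r \le |D\Phi(y)| \le 1 + c_1 r$ with $c_1 = 4\kappa_0$. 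I do not expect a genuine obstacle here: the only care needed is the bookkeeping for the tubular neighborhood and the sign conventions for the Weingarten map. The statement is really just a quantitative version of the classical fact that the reflection $\Phi$ is a diffeomorphism restricting to the identity on $\widetilde S$, with the linear rate of degeneration governed by the second fundamental form of $\widetilde S$.
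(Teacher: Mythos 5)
Your proof is correct, and it takes a genuinely different (and in some ways cleaner) route than the paper. The paper computes $D\Phi$ and $D^2\Phi$ at points of $\widetilde S$ itself, finding $D\Phi(x)|_{T_x\widetilde S}=\mathrm{id}$, $D\Phi(x)\xi=-\xi$ and $D^2\Phi(x)\cdot(\xi,v)=\nabla^{\rt}_v\xi$, notes $|D\Phi(x)|=1$, and then Taylor-expands $|D\Phi(x+r\xi(x))|$ in $r$, bounding the first-order coefficient by the supremum $\kappa$ of the second fundamental form of $\widetilde S$. You instead factor $\Phi=H\circ G^{-1}$ through Fermi coordinates and diagonalize exactly in the principal directions, obtaining the singular values $\bigl|\tfrac{1+r\kappa_1}{1-r\kappa_1}\bigr|$, $\bigl|\tfrac{1+r\kappa_2}{1-r\kappa_2}\bigr|$ and $1$. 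What your version buys: it is an exact computation rather than an asymptotic one (it sidesteps the remainder bookkeeping in the paper's Taylor step, whose error term written as ``$O(r)$'' is really of order $r^2$), and it yields all singular values of $D\Phi(y)$, hence directly the two-dimensional Jacobian bounds $(1-c_1r)^2\leq J_2\Phi\leq(1+c_1r)^2$ that are what is actually used downstream (e.g.\ in the mass comparisons of Lemma \ref{dens}) to control $\m(\Phi_{\#}T)$. Both arguments identify $c_1$ as a fixed multiple of the curvature bound for $\widetilde S$, and your appeal to the tubular neighborhood theorem to invert $G$ is exactly the localization the paper also performs, so there is no gap.
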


\begin{proof}

Given $x \in \tilde S$ and $ v\in T_x \tilde S$, consider a curve $\alpha : (-\epsilon,\epsilon) \to \tilde S$ such that $\alpha(0) = x$ and $\alpha'(0) = v$. Then
\begin{eqnarray*}
D\Phi(x)\cdot v &=& \frac{d}{ds}\biggl|_{s=0} \Phi\bigl(\alpha(s)\bigr) = \frac{d}{ds}\biggl|_{s=0} \alpha(s) = v,\\\\
D\Phi(x)\cdot \xi &=& \frac{d}{ds}\biggl|_{s=0} \Phi\bigl(x + s\xi(x)\bigr) =  \frac{d}{ds}\biggl|_{s=0} \bigl(x - s\xi(x)\bigr) = -\xi(x),
\end{eqnarray*}

\begin{eqnarray*}
D^2\Phi(x)\cdot (\xi,\xi) &=& \frac{d^2}{ds^2}\biggl|_{s=0} \Phi\bigl(x + s\xi(x)\bigr) = \frac{d^2}{ds^2}\biggl|_{s=0} \bigl(x - s\xi(x)\bigr) = 0,\\\\
D^2\Phi(x)\cdot (\xi,v) &=& \frac{\p^2}{\p t\p s}\biggl|_{t=0,s=0} \Phi\bigl(\alpha(s) + t\xi(\alpha(s))\bigr) = \nabla^{\rt}_v \xi.
\end{eqnarray*}

\noindent
So, $|D\Phi(x)| = 1$, and by Taylor's theorem we have for $r$ small enough
$$\bigl|D\Phi\bigl(x + r\xi(x)\bigr)\bigr| = 1 + \bigl\langle D\Phi(x),D^2\Phi(x)\cdot(\xi,\cdot)\bigr\rangle r + O(r),$$
where $|O(r)| \leq  \lambda r$, for some constant $\lambda > 0$. By the computation above,
$$\bigl|\bigl\langle D\Phi(x),D^2\Phi(x)\cdot(\xi,\cdot)\bigr\rangle\bigr| \leq \kappa,$$
where $\kappa$ is the supremum of the norm of the second fundamental form of $S$. Therefore, the result follows.
\end{proof}

\begin{lemma}\label{dens}
There exists $\Theta(\mu_{\ti},\ze) := \displaystyle \lim_{r \to 0} \frac{\mu_{\ti}\bigl(B_{r}(\ze)\bigr)}{\pi r^2}$.
\end{lemma}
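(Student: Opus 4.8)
The plan is to establish a monotonicity formula for the mass ratio $r \mapsto \mu_{\ti}(B_r(\ze))/(\pi r^2)$ in the almost-minimizing, reflected setting, and then deduce existence of the limit. The key point is that $\ti$ is \emph{almost} area-minimizing near $\ze$: since $\gamma$ (and hence $\Phi(\gamma)$) meets $\widetilde S$ orthogonally at the corner, and since $\Phi$ is a $C^1$ reflection whose derivative satisfies $|D\Phi(y)| \leq 1 + c_1 r$ by Lemma \ref{der}, the current $\ti = T'\mres B$ is minimizing up to a multiplicative error of the form $(1 + \Lambda r)$ on balls $B_r(\ze)$, against competitors $\ti + X$ with $\spt X \subset B_r(\ze)$ compact and $\partial X = 0$. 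Indeed, given such a competitor one produces an admissible competitor for the original problem $T$ inside $W$ by cutting along $\widetilde S$, applying the minimality \eqref{min} to the $B_3^+$-part, pushing the $\Phi(B_3^+)$-part back by $\Phi_\#$, and controlling the mass distortion by $|D\Phi| \le 1 + c_1 r$; the orthogonality of $\gamma$ with $S$ is what guarantees that the pieces glue along $S$ without creating extra boundary, so that $\spt\partial X' \subset S$ as required. The upshot is an almost-minimality inequality $\m_{B_r(\ze)}(\ti) \le (1 + \Lambda r)\,\m_{B_r(\ze)}(\ti + X)$.

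Granting this, the next step is the standard first-variation / monotonicity argument. Comparing $\ti$ on $B_r(\ze)$ with the cone over $\ti \mres \partial B_r(\ze)$ (with vertex $\ze$), which is an admissible competitor since it has the same boundary, and using the coarea formula together with the almost-minimality inequality, one obtains a differential inequality of the form
\begin{equation*}
\frac{d}{dr}\left( e^{\Lambda r}\,\frac{\mu_{\ti}(B_r(\ze))}{\pi r^2} \right) \geq 0
\end{equation*}
for a.e.\ small $r > 0$, possibly after adjusting $\Lambda$. Here one must verify that $\ze \notin \gamma \cup \Phi(\gamma)$ in the relevant sense—or rather that the boundary $\gamma\cup\Phi(\gamma)$, being a $C^2$ curve through $\ze$, contributes a controlled error: the cone comparison is made only with the part of $\partial B_r(\ze)$ away from the fixed boundary, or one notes that $\mathcal H^1(\gamma \cap B_r) = O(r)$ so the boundary term in the first variation is lower-order and can be absorbed. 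Since the reflected current $\ti$ has no free boundary inside $B$ (the free boundary lay on $S$, which after reflection becomes interior), this is genuinely the interior monotonicity formula for almost-minimizers, as in \cite[Section 3]{Gr2} or \cite{Sim}.

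Once the modified mass ratio $r \mapsto e^{\Lambda r}\mu_{\ti}(B_r(\ze))/(\pi r^2)$ is monotone non-decreasing, it has a limit as $r \to 0^+$ (it is bounded below by $0$); and since $e^{\Lambda r} \to 1$, the limit $\Theta(\mu_{\ti},\ze) = \lim_{r\to 0}\mu_{\ti}(B_r(\ze))/(\pi r^2)$ exists and is finite (finiteness following from $\m(\ti) < +\infty$, which bounds the ratio from above for $r$ bounded away from $0$ and hence, by monotonicity, for all small $r$). I expect the main obstacle to be the careful construction of the comparison competitor for $T$ in $W$ from a given interior competitor $X$ for $\ti$: one must track precisely how cutting along $\widetilde S$ and reflecting interacts with the constraint $\spt\partial(\,\cdot\,) \subset S$ and with the corner geometry, and verify that the mass-distortion constant is uniform and linear in $r$—this is exactly the place where the hypothesis that $\gamma$ meets $S$ orthogonally is used essentially, and where the ideas reported in \cite{Gr3} must be filled in.
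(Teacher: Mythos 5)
Your proposal follows essentially the same route as the paper's proof: first the almost-minimality of the reflected current $\ti$, obtained by splitting a competitor along $\widetilde S$, projecting via the nearest-point projection as in Proposition \ref{edelen} and controlling the reflection's distortion by Lemma \ref{der}; then a cone comparison combined with the coarea formula and an exponential correction to produce a monotone quantity. The one imprecision is that the fixed boundary $\gamma\cup\Phi(\gamma)$ forces an additive, not merely multiplicative, correction: the comparison cone must be completed by a strip $E_r$ of mass $O(r^3)$ between the curve and the radii to its endpoints, which yields the differential inequality $M'(r)\geq \frac{2}{r}\left[(1-cr)M(r)-cr^3\right]$ and hence monotonicity of $e^{2cr}M(r)/r^2+e^{2cr}-1$ rather than of $e^{\Lambda r}M(r)/r^2$ alone — but this does not affect the conclusion that the limit exists.
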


\begin{proof}
The surface $\widetilde{S}$ separates the ball $B$ in two connected components $\mathcal{U}$ and $\mathcal{U}'$. Suppose (without loss of generality) $\mathring M \subset \mathcal{U}$. Let $V$ be an open set such that $V \subset\subset  B_{r}(y_0) \subset B,$
for some $y_0 \in B\cap S$ . Let $Y \in \de 2(B)$ be of integer multiplicity such that $\spt Y \subset V\cap\mathcal{U}$ and $\spt\p Y \subset \widetilde{S}$. Denote by $\pi: \mathcal{U} \to B_3(\ze)\cap W$ the nearest point projection onto $W$. Arguing as in Proposition \ref{edelen} we conclude that there is a constant $c_2$ such that
\begin{equation}\label{eq:alm.min.2}
{\bf M}_{V}(T) \leq {\bf M}_{V}\bigl(\pi(T + Y)\bigr) \leq (1+c_2r){\bf M}_{V}(T + Y).
\end{equation}

Now let $U$ be an open set such that $U = \Phi(U)$ and $U \subset\subset B_{R}(x_0) \subset B,$
for some $x_0 \in S\cap B$. Let $X \in \de 2(B)$ be of integer multiplicity such that $\spt X \subset U$ and $\p X=0$. We can write 
$$X = X_1 + X_2= X\mres\mathcal{U} + X\mres\mathcal{U}',$$
where $\spt \p X_i \subset \widetilde{S}$, $i = 1,2$, since $\p X=0$. By \eqref{eq:alm.min.2}, for $R$ sufficiently small we have
\begin{eqnarray*}
\m_{U}(\ti + X) &=& \m_{U}(T + X_1) + \m_{U}(\Phi_{\#}T - X_2)\\
&\geq & \frac{1}{1+ c_2 R}\m_{U}(T) + \m_{U}\bigl(\Phi_{\#}(T - \Phi_{\#}X_2)\bigr)\\
&\geq & \frac{1}{1+ c_2 R}\m_{U}(T) + (1 - c_1 R)^2\m_{\Phi(U)}(T - \Phi_{\#}X_2)\\
&\geq & \frac{1}{1+ c_2 R}\m_{U}(T) + \frac{(1 - c_1 R)^2}{1+ c_2 R}\m_{\Phi(U)}(T)\\
&\geq & \frac{1}{1+ c_2 R}\m_{U}(T) + \frac{(1 - c_1 R)^2}{1+ c_2 R}\m_{U}(\Phi_{\#}T)\\
&\geq & \frac{1}{1+ c_2 R}\m_{U}(T) + \frac{1 - c_3 R}{1+ c_2 R}\m_{U}(\Phi_{\#}T)\\
&=& \frac{1}{1+ c_2 R}\m_{U}(\ti) - \frac{c_3 R}{1+ c_2 R} \m_{U}(\Phi_{\#}T)\\
&\geq & \frac{1}{1+ c_2 R}\m_{U}(\ti) - \frac{c_3 R}{1+ c_2 R} \m_{U}(\ti)\\
&\geq & (1 - c_4 R)\m_{U}(\ti) - c_5 R \m_{U}(\ti).
\end{eqnarray*}

Thus
\begin{equation}\label{ineq.mass}
\m_{U}(\ti) \leq \m_{U}(\ti + X) + c_6 R\m_{U}(\ti).
\end{equation}

For $r>0$ sufficiently small, denote by ${B_r}$ the closed ball centered at the origin $\ze$ with radius $r$. Consider the curve $\alpha_r = \spt\left(\partial (\widetilde{T}\mres {B_r})\right)\setminus\left[\bigl(\gamma\cup\Phi(\gamma)\bigr)\cap {B_r}\right]$ and take the current $C_r=[[\ze \#\alpha_r]]+[[E_r]],$ where $\ze \#\alpha_r$ is the cone over $\alpha_r$, and $E_r\subset {B_r}$ is the strip bounded by $\gamma\cap {B_r}$ and the two radii joining $\ze$ to the endpoints of $\gamma\cap {B_r}$ (see Figure \ref{fig.Er}). Observe that $\m_{B_r}\bigl([[E_r]]\bigr)\leq c_7r^3,$ for some constant $c_7,$ and by the coarea formula 
$$\m_{B_r}\bigl([[\ze \#\alpha_r]]\bigr)=\displaystyle\int_0^r\frac{t}{r}\ell(\alpha_r)\ dt=\frac{r}{2}\ell(\alpha_r).$$

\begin{figure}[!h]
\includegraphics[scale=0.5]{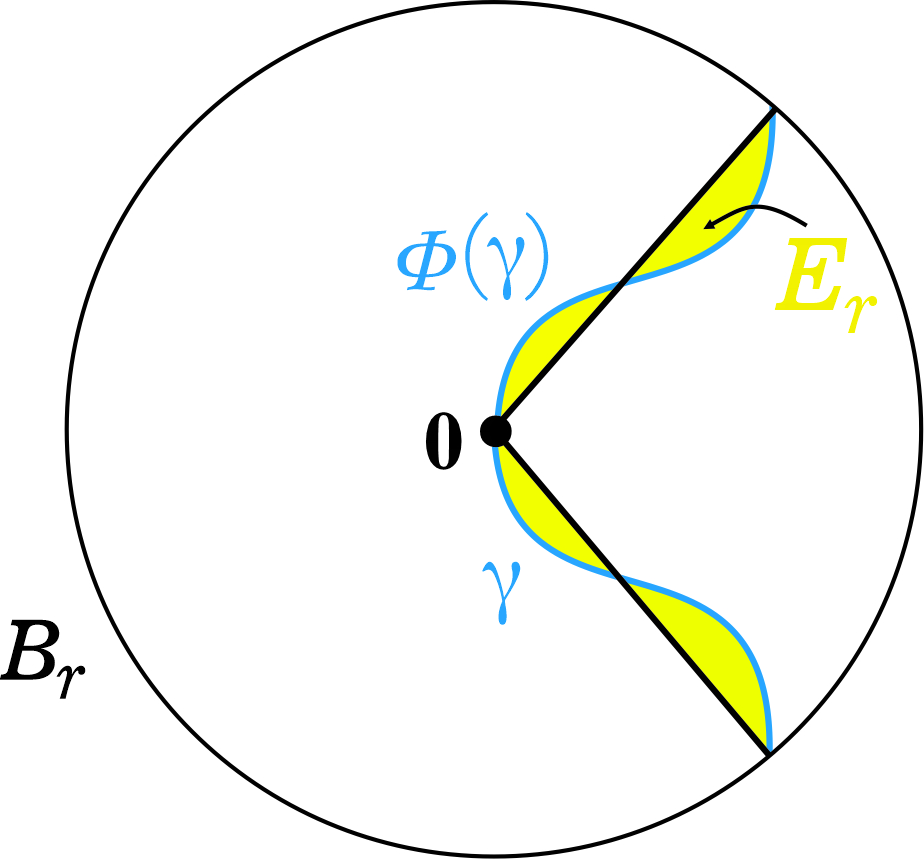}
\caption{The strip $E_r$.}
\label{fig.Er}
\end{figure}

To simplify our notation, let us denote $M(r)=\m_{B_r}(\widetilde{T}).$ Observe that $\partial (\widetilde{T}\mres {B_r})=\p C_r$, so by $(\ref{ineq.mass})$, we have
$$
M(r)\leq(1+c_8r)\m_{B_r}(C_r),
$$
for some constant $c_8$. Hence,
\begin{equation}
M(r)\leq (1+c_8r)(\m_{B_r}([[\ze\#\alpha_r]])+\m_{B_r}([[E_r]]))\leq (1+c_8r)\left(\frac{r}{2}\ell(\alpha_r)+c_7 r^3\right).
\label{eq.11}
\end{equation}

Define $\si = \spt(\widetilde{T}\mres B_r)$. By the coarea formula
$$
M(r)=\int_0^r\int_{\alpha_t}\frac{1}{|\nabla_{\si} r|}\ ds\ dt,
$$
thus
\begin{equation}
\label{eq.12}
M'(r)\geq \ell(\alpha_r), \ \ \mbox{for a.e.} \ r.
\end{equation}

Applying (\ref{eq.12}) in (\ref{eq.11}), we get
$$
M(r)\leq (1+c_8r)\left(\frac{r}{2}M'(r)+c_7r^3\right),
$$
which implies that for some constant $c$ and a.e. $r$
\begin{equation}
\label{eq.13}
M'(r)\geq \frac{2}{r}\left[(1-cr)M(r)-cr^3\right].
\end{equation}

Define $h(r)=f(r)\frac{M(r)}{r^2},$ where $f$ is a differentiable function which will be chosen later. We have for a.e. $r$
\begin{eqnarray*}
h'(r)&=&f'(r)\frac{M(r)}{r^2}-2\frac{f(r)}{r^3}M(r)+f(r)\frac{M'(r)}{r^2}\\
&\geq& f'(r)\frac{M(r)}{r^2}-2\frac{f(r)}{r^3}M(r)+\frac{f(r)}{r^2} \frac{2}{r}\left[(1-cr)M(r)-cr^3\right]\\
%&=& f'(r)\frac{M(r)}{r^2}+\frac{f(r)}{r^2}(-2c M(r)-2c r^2)\\
&=&\frac{f(r)M(r)}{r^2}\left(\frac{f'(r)}{f(r)}-2c\right)-2c f(r).
\end{eqnarray*}

Taking $f(r)=e^{2cr}$, we have $\frac{f'(r)}{f(r)}=2c$ and $\left(h(r)+e^{2cr}\right)'\geq 0$ for a.e. $r.$ In particular, the function $e^{2cr}\frac{M(r)}{r^2}+e^{2cr}-1$ is non-decreasing and, therefore, 
$\lim_{r\to 0}\frac{M(r)}{r^2}$ exists.
\end{proof}

\begin{definition}
Consider the map defined by $\eta_{x_0,\lambda} = \lambda^{-1}(x - x_0)$, $x \in \R^{n+1}$. If $x_0 = \ze$, we simply write $\eta_{\lambda}$. Suppose $T \in \de n(U)$ is integer multiplicity and $x_0 \in \spt T$. If there exist a sequence $\{\lambda_j\}_{j \in \na}$ converging to $0$ and an integer multiplicity current $C \in \de n(\R^{n+1})$ such that
$$(\eta_{x_0,\lambda_j})_{\#}T \to C, \  \ \mbox{and} \  \ (\eta_{x_0,\lambda})_{\#}C = C, \ \forall \ \lambda > 0,$$
we call $C$ an oriented tangent cone to $T$ at $x_0$.
\end{definition}

\begin{theorem}\label{cone.exist}
There is an oriented tangent cone $C$ to $\ti$ at $\ze$ such that $\spt\p C$ is an oriented straight line. Moreover, 
\begin{enumerate}
\item $C$ is minimizing in $\rt$, that is, for any open set $U \subset\subset \rt$ and any integer multiplicity current $X \in \de 2(\rt)$ satisfying $\spt X \subset U$ and $\p X=0$, we have
$$\m_U(C) \leq \m_U(C + X);$$
\item if $T_j = (\eta_{\lambda_j})_{\#} \ti \to C$ we have
$$\mu_{T_j} \to \mu_{C}, \ \ \mbox{and} \ \ \Theta(C,\ze) = \Theta(\mu_{\ti},\ze).$$
\end{enumerate}
\end{theorem}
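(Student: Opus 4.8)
The plan is to follow the classical monotonicity/compactness scheme for area-minimizing currents, adapted to the almost-minimizing inequality \eqref{ineq.mass} obtained in Lemma \ref{dens}. First I would upgrade \eqref{ineq.mass} to a monotonicity-type statement: exactly as in the proof of Lemma \ref{dens}, the function $r \mapsto e^{2cr}\mu_{\ti}(B_r(\ze))/r^2 + e^{2cr} - 1$ is non-decreasing near $0$, which already gives uniform mass bounds $\mu_{\ti}(B_r(\ze)) \leq C r^2$ on small balls, and more importantly gives the density $\Theta(\mu_{\ti},\ze)$ from Lemma \ref{dens}. Rescaling by $\eta_{\lambda_j}$, the pushed-forward currents $T_j = (\eta_{\lambda_j})_{\#}\ti$ inherit uniform mass bounds $\m_{B_R}(T_j) \leq C R^2$ on every fixed ball $B_R$ (using that the almost-minimizing constant $c_6 R$ in \eqref{ineq.mass} scales to $c_6 \lambda_j R \to 0$), and their boundaries $\p T_j$ are supported in $\lambda_j^{-1}\bigl((\gamma \cup \Phi(\gamma)) \cap B\bigr)$, which — since $\gamma$ is $C^2$ and meets $S$ orthogonally at $\ze$, and $\Phi$ is the reflection fixing $\widetilde S$ — converges to a straight line $L$ through the origin, with $\m_{B_R}(\p T_j) \leq C R$. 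By the compactness theorem for integer multiplicity currents with locally bounded mass and boundary mass (Federer–Fleming, e.g. \cite{Sim}), after passing to a subsequence $T_j \to C$ for some integer multiplicity current $C \in \de 2(\rt)$, with $\spt \p C \subset L$.

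Next I would show $C$ is a cone and is minimizing. The almost-minimizing inequality \eqref{ineq.mass} is scale-invariant up to the vanishing error: for $X$ with $\spt X \subset U \subset\subset \rt$ and $\p X = 0$, pulling back to the $\ti$ scale gives $\m_{\lambda_j U}(\ti) \leq \m_{\lambda_j U}(\ti + (\eta_{\lambda_j}^{-1})_{\#}X) + c_6 \lambda_j(\mathrm{diam}\,U)\,\m_{\lambda_j U}(\ti)$, and rescaling back yields $\m_U(T_j) \leq \m_U(T_j + X) + c_6 \lambda_j(\mathrm{diam}\,U)\,\m_U(T_j)$; letting $j \to \infty$ and using lower semicontinuity of mass together with the mass-convergence statement below, we get $\m_U(C) \leq \m_U(C + X)$, which is item (1). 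That $C$ is dilation-invariant, $(\eta_\lambda)_{\#}C = C$, follows from the monotonicity formula: the rescaled monotonicity for $C$ (now with \emph{zero} error, since $C$ is genuinely minimizing) forces the density ratio $\mu_C(B_r)/r^2$ to be constant, and constancy in the monotonicity formula is precisely the condition that $C$ be a cone; simultaneously this identifies $\Theta(C,\ze) = \lim_r \mu_C(B_r)/(\pi r^2) = \Theta(\mu_{\ti},\ze)$. Finally, $\spt\p C$ being a line through $\ze$ that is a cone means it is an \emph{oriented straight line}.

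The mass-convergence $\mu_{T_j} \to \mu_C$ in item (2) is the step I expect to be the main obstacle, because it does not follow from weak convergence of currents alone — lower semicontinuity only gives $\mu_C(K) \leq \liminf \mu_{T_j}(K)$, and one must rule out loss of mass. The standard device, which I would use here, is to combine the almost-minimizing property of the $T_j$ with the monotonicity formula: the uniform almost-monotonicity gives, for each fixed $R$, that $\mu_{T_j}(B_R(\ze))/(\pi R^2) \to \Theta(\mu_{\ti},\ze)$ as $j \to \infty$ (this is where the precise form of Lemma \ref{dens} is used, rescaled), while $C$ being a cone gives $\mu_C(B_R(\ze))/(\pi R^2) = \Theta(C,\ze)$; matching these forces $\mu_{T_j}(B_R(\ze)) \to \mu_C(B_R(\ze))$, i.e. no mass is lost on balls centered at the origin. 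To promote this to full weak-$*$ convergence of the measures one argues that any weak-$*$ limit $\mu$ of (a subsequence of) $\mu_{T_j}$ satisfies $\mu \geq \mu_C$ by lower semicontinuity and has the same total mass on every $B_R(\ze)$, hence $\mu = \mu_C$; the equality $\Theta(C,\ze) = \Theta(\mu_{\ti},\ze)$ then drops out. One technical point requiring care is that the monotonicity estimates of Lemma \ref{dens} were derived for the \emph{specific} comparison current $C_r$ (cone over $\alpha_r$ plus the strip $E_r$), so one must check that the constants there are uniform under the rescalings $\eta_{\lambda_j}$ — which they are, since they depend only on $\kappa = \sup|A_S|$ and the $C^2$ geometry of $\gamma$ near $\ze$, quantities that only improve under blow-up.
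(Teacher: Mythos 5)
Your first half --- the quadratic mass bounds from Lemma \ref{dens}, the linear boundary-mass bounds coming from the fact that $\gamma\cup\Phi(\gamma)$ is $C^{1,1}$ with tangent line $\Gamma$ at $\ze$, the Federer--Fleming compactness to extract $C$ with $\spt\p C\subset\Gamma$, and the proof of item (1) by rescaling \eqref{ineq.mass} and using lower semicontinuity --- is exactly the paper's argument and is correct.

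The final steps, however, contain a genuine circularity. You assert that the cone property of $C$ ``follows from the monotonicity formula'' because minimality ``forces the density ratio $\mu_C(B_r)/r^2$ to be constant''; but monotonicity for a minimizing current only gives that this ratio is \emph{non-decreasing} (a half-plane bounded by $\Gamma$ together with a disjoint plane missing the origin is minimizing with boundary on a line and is not a cone). Constancy of the ratio requires the identification $\mu_C(B_r)/(\pi r^2)=\Theta(\mu_{\ti},\ze)$ for all $r$, which is precisely a consequence of the Radon-measure convergence $\mu_{T_j}\to\mu_C$. You then derive that convergence \emph{from} the cone property (``$C$ being a cone gives $\mu_C(B_R)/(\pi R^2)=\Theta(C,\ze)$; matching these forces\dots''), so the two claims feed on each other. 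Moreover, the ``density matching'' cannot close on its own: lower semicontinuity only yields $\mu_C(B_R)\le\Theta(\mu_{\ti},\ze)\pi R^2$, and the missing reverse inequality $\limsup_j\mu_{T_j}(K)\le\mu_C(K)$ is the hard direction. The paper obtains it by the cut-and-paste comparison of ~\cite[Theorem 34.5]{Sim}: one replaces $T_j$ inside a set $U_{\alpha,\epsilon}$ by $C$ plus a thin interpolating collar and invokes the almost-minimality \eqref{ineq.mass} of $T_j$; this construction is the essential ingredient you do not supply. The correct order is: (i) prove $\mu_{T_j}\to\mu_C$ by the comparison argument; (ii) deduce $\Theta(C,\ze)=\Theta(\mu_{\ti},\ze)$ and constancy of $\mu_C(B_r)/r^2$; (iii) insert this into the \emph{boundary} monotonicity identity (the paper uses ~\cite[equation 31]{Bour}), observe that the boundary term $\int\langle x,\eta\rangle\,d\mu_{\p C}$ vanishes because $\Gamma$ is a straight line through the origin, and conclude as in ~\cite[Theorem 19.3]{Sim} that $C$ is a cone. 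Note also that since $C$ has nonempty boundary, the monotonicity formula you invoke must be the boundary version; the vanishing of its boundary term is a point that needs to be checked, not assumed.
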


\begin{proof}
Consider a sequence $\{\lambda_j\}_{j \in \na}$ converging to $0$. By Lemma \ref{dens}, we have 
$$\m_{B_r(\ze)}\bigl((\eta_{\lambda_j})_{\#} \ti\bigr) = \lambda_j^{-2}\m_{B_{r\lambda_j}(\ze)}(\ti) = \lambda_j^{-2}\mu_{\ti}\bigl(B_{r\lambda_j}(\ze)\bigr) \leq c r^2.$$

Since $\gamma$ meets $S$ orthogonally at $\ze$, we have that $\gamma\cup\Phi(\gamma)$ is a $C^{1,1}$-curve. Indeed, it is clearly $C^1$, and since $\gamma$ is $C^2$, its derivative is Lipschitz, so this also holds for the duplicate curve. Hence, there exists
$$\lim_{r \to 0} \frac{\mu_{\p\ti}\bigl(B_{r}(\ze)\bigr)}{r} < +\infty,$$
therefore,
$$\m_{B_r(\ze)}\Bigl(\p\bigl((\eta_{\lambda_j})_{\#} \ti\bigr)\Bigr) = \m_{B_r(\ze)}\bigl((\eta_{\lambda_j})_{\#} \p\ti\bigr) = \lambda_j^{-1}\m_{B_{r\lambda_j}(\ze)}(\p\ti) \leq \tilde c r.$$

Thus, from the compactness theorem for integer multiplicity currents (see ~\cite[4.2.17]{Fed}), a subsequence of $\{(\eta_{\lambda_j})_{\#} \ti\}$ converges to a current $C \in \de 2(\rt)$ and the boundaries $\p\bigl( (\eta_{\lambda_j})_{\#} \ti\bigr)$ converge to $\p C \in \de 1(\rt)$ (in the subsequence). Furthermore, since $\gamma\cup\Phi(\gamma)$ is a $C^{1,1}$-curve, it has an oriented tangent line $\Gamma$ at $\ze$. In particular, $\spt\p C = \Gamma$. At the end of this proof we will show that $C$ is a cone.

Let $U \subset\subset \rt$ be an open set and fix $r > 0$ so that $U\cup\Phi(U) \subset\subset B_r(\ze)$. Let $X \in \de 2(\rt)$ be integer multiplicity such that $\spt X \subset U$ and $\p X=0$. Choose a sequence $\{\lambda_j\}$ with $\lambda_j \to 0$ and
$T_j := (\eta_{\lambda_j})_{\#}\ti \to C.$
Define $X_j = (\eta_{\lambda_j})^{-1}_{\#}X$. We have $ (\eta_{\lambda_j})^{-1}(U) \subset B_{\lambda_j r}(\ze)$; hence, for $j$ large enough, $\lambda_j r\leq 3$ and  $ (\eta_{\lambda_j})^{-1}(U) \subset B$. Then, it follows from \eqref{ineq.mass} that
\begin{eqnarray}\label{seqmass}
\m_{U}(T_j) \ = \ \m_{U}\bigl((\eta_{\lambda_j})_{\#}\ti\bigr) &=& \lambda_{j}^{-2}\m_{(\eta_{\lambda_j})^{-1}(U)}(\ti) \nonumber\\
&\leq & \lambda_{j}^{-2}\bigl[\m_{(\eta_{\lambda_j})^{-1}(U)}(\ti + X_j) + c_6\lambda_j r\m_{(\eta_{\lambda_j})^{-1}(U)}(\ti)\bigr] \nonumber\\
&=& \m_{U}(T_j + X) + c_6 \lambda_jr\m_{U}(T_j) \nonumber\\
&\leq & \m_{U}(T_j + X) + \tilde c \lambda_j r^3,\label{AAM}
\end{eqnarray}
where we used Lemma \ref{dens} in the last inequality.

Since $T_j \to C$, $\lambda_j \to 0$ and  because of the lower-semi-continuity of mass, a standard argument using \eqref{seqmass} yields that 
$$\m_{U}(C) \leq \m_{U}(C + X).$$

Next we will prove $\mu_{T_j} \to \mu_{C}$ in the sense of Radon measures. Consider a compact set $K \subset \rt$ and an open set $U \subset\subset \rt$ containing $K$. For any $\epsilon>0$, define $\phi_{\epsilon}: U \to [0,1]$ such that $\phi_{\epsilon} \equiv 1$ in some neighborhood of $K$ and $$\spt \phi_{\epsilon} \subset \{x \in \rt; \ \dis(x,K) \leq \epsilon\}.$$ 

Consider
$$U_{\alpha,\epsilon} = \{x \in \rt; \phi_{\epsilon} > \alpha\};$$
hence, for $0 \leq \alpha < 1$, we have
$K \subset U_{\alpha,\epsilon} \subset\subset \rt$.

Using \eqref{seqmass} and proceeding as in the proof of ~\cite[Theorem 34.5]{Sim}, we conclude that $\m_{U_{\alpha,\epsilon}}(T_j) \leq \m_{U_{\alpha,\epsilon}}(C) + \epsilon_j$, where $\epsilon_j \to 0$. So,
$$\lim\sup \m_{U_{\alpha,\epsilon}}(T_j) \leq \m_{U_{\alpha,\epsilon}}(C).$$
Since $K \subset U_{\alpha,\epsilon} \subset \{x; \ \dis(x,K) < \epsilon\}$ (by construction) we obtain
$$\lim\sup \mu_{T_j}(K) \leq \m_{\{x; \ \dis(x,K) < \epsilon\}}(C).$$
Hence, letting $\epsilon \to 0$, it follows that
\begin{equation}\label{limsup}
\lim\sup \mu_{T_j}(K) \leq \mu_{C}(K).
\end{equation}

By the lower semi-continuity of mass with respect to weak convergence,
we have
\begin{equation}\label{liminf}
\mu_{C}(K) \leq \lim\inf \mu_{T_j}(K).
\end{equation}

Since \eqref{limsup} and \eqref{liminf} hold for arbitrary compact $K$ and open $U \subset\subset \rt$, it follows by a standard approximation argument that 
\begin{equation}\label{radonconver}
\mu_{T_j} \to \mu_{C}.
\end{equation}

Finally, choose $r > 0$ such that $\mu_{C}\bigl(\p B_r(\ze)\bigr) = 0$ (which is true except possibly for countably many $r$). Then, \eqref{radonconver} implies that
$$\frac{\mu_{C}\bigl(B_r(\ze)\bigr)}{\pi r^2} = \lim_{j \to \infty}\frac{\mu_{T_j}\bigl(B_r(\ze)\bigr)}{\pi r^2} = \Theta(\mu_{\ti},\ze).$$
Thus
\begin{equation}\label{dens.cone}
\Theta(\mu_{C},\ze) = \frac{\mu_{C}\bigl(B_r(\ze)\bigr)}{\pi r^2} = \Theta(\mu_{\ti},\ze).
\end{equation}

Finally, since $C$ is area-minimizing, it is a stationary varifold. So, using \eqref{dens.cone} in the monotonicity identity at the boundary (see  ~\cite[equation 31]{Bour}) for $0 < s < r$ we have
\begin{equation}\label{monot}
0 = \frac{\mu_{C}(B_r)}{r^2} - \frac{\mu_{C}(B_s)}{s^2} = \int_{B_r\setminus B_s} \frac{\bigl|\nabla^{\perp}(|x|)\bigr|^2}{|x|^2}\ d\mu_{C} + \int_{s}^{r}t^{-3}\int_{B_t\cap\Gamma}\langle x,\eta\rangle \ d\mu_{\p C}\ dt,
\end{equation}
where $B_t = B_t(\ze)$ and $\eta$ is a conormal to $\Gamma = \spt\partial C$. Since $\Gamma$ is a line containing $\ze$, the second integral on \eqref{monot} vanishes. So, we can argue as in ~\cite[Theorem $19.3$]{Sim} to conclude that $C$ is a cone.
\end{proof}

\begin{theorem}
There is a neighborhood $U$ of $\ze$ such that $U\cap\spt \ti$ is an embedded oriented $C^{1,\alpha}$-surface with boundary, for $0 < \alpha < 1$.
\label{thm.reg2}
\end{theorem}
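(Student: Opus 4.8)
The plan is to identify the oriented tangent cone $C$ produced in Theorem \ref{cone.exist} and show it is a multiplicity-one half-plane, so that $\Theta(\mu_{\ti},\ze)=\tfrac12$, and then to invoke boundary regularity for almost area-minimizing currents at a boundary point of least density. \emph{Step 1 (the tangent cone is a multiplicity-one half-plane).} By Theorem \ref{cone.exist}, $C$ is an area-minimizing $2$-cone in $\rt$ with $\spt\p C=\Gamma$ an oriented straight line through $\ze$. Writing $C=\ze\#(C\mres\s^2)$, the link $\ell=\spt C\cap\s^2$ is a finite union of geodesic arcs; area-minimality forces each to be a length-minimizing geodesic, hence — since $\p\ell\subset\Gamma\cap\s^2=\{p,-p\}$ and closed great-circle components would be removable without changing $\p C$ and thus destroy minimality — a half great circle from $p$ to $-p$, of length $\pi$. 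Moreover $\p\ti$ has multiplicity one near $\ze$: indeed $\spt\bigl([[\gamma]]-\p T\bigr)\subset S$, so $\p T=[[\gamma]]$ with multiplicity one on $\gamma\setminus S$, the multiplicity being locally constant along the $C^2$-arc $\gamma\setminus\{\ze\}$; reflecting gives multiplicity one for $\p\ti$ on $(\gamma\cup\Phi(\gamma))\setminus\{\ze\}$, hence $\p C=[[\Gamma]]$ with multiplicity one. A cone built from several half great circles would have $\p C$ of multiplicity $\geq 2$ or with cancellation (so that $\Gamma\not\subset\spt\p C$), a contradiction. Therefore $\ell$ is a single half great circle, $C$ is a multiplicity-one half-plane with boundary $\Gamma$, and
$$\Theta(\mu_{\ti},\ze)=\Theta(\mu_C,\ze)=\frac{\mu_C\bigl(B_1(\ze)\bigr)}{\pi}=\frac12.$$
(Alternatively the value $\tfrac12$ can be pinned down using the one-sided inclusion $\spt T\subset\overline{B_3^{+}}$ to get $\Theta(\mu_T,\ze)\leq\tfrac14$ by a cone comparison as in Lemma \ref{dens}, together with the boundary monotonicity lower bound $\Theta(\mu_{\ti},\ze)\geq\tfrac12$ coming from \eqref{monot}.)

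\emph{Step 2 (boundary regularity at $\ze$).} The curve $\gamma\cup\Phi(\gamma)$ is $C^{1,1}$ (as observed in the proof of Theorem \ref{cone.exist}); $\ti$ is an integer-multiplicity current which is almost area-minimizing in the sense of \eqref{ineq.mass}; $\ze\in\spt\ti$ lies on $\spt\p\ti$, with the multiplicity-one curve $\gamma\cup\Phi(\gamma)$ as boundary near $\ze$; and by Step 1 its density there equals $\tfrac12$, the least possible value at a boundary point. One then applies Allard's boundary regularity theorem in the form valid for almost-minimizing currents (see \cite{Bour}, and also \cite{HarSim}): there is a neighborhood $U$ of $\ze$ and $0<\alpha<1$ such that $U\cap\spt\ti$ is an embedded oriented $C^{1,\alpha}$-surface with boundary $U\cap(\gamma\cup\Phi(\gamma))$. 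Since away from $\ze$ the set $\spt\ti$ is already a smooth embedded minimal surface — by the interior theory, by Gr\"uter's free boundary regularity \cite{Gr2}, and by the results of \cite{Ed} along the fixed boundary, applied to $T$ and then transported through $\Phi$ — the conclusion follows; and regularity of $\ti$ at $\ze$ gives regularity of $T$ at the corner.

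\emph{Main obstacle.} The delicate point is Step 1, i.e. establishing $\Theta(\mu_{\ti},\ze)=\tfrac12$ and thereby excluding tangent cones of larger density such as a half-plane with additional sheets or a plane-plus-half-plane. This must genuinely exploit that $\ti$ is the reflection of the one-sided minimizer $T$, either via the classification of $2$-dimensional area-minimizing cones with line boundary in $\rt$ sketched above, or via the upper bound $\Theta(\mu_T,\ze)\leq\tfrac14$ forced by $\spt T\subset\overline{B_3^{+}}$. Once the density $\tfrac12$ is in hand, Step 2 is a routine appeal to the boundary regularity theory for almost minimizers, after checking that \eqref{ineq.mass} is precisely the $(\m,\varepsilon,\delta)$-type condition those results require.
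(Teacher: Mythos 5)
Your Step 2 would be fine \emph{if} Step 1 were established, but Step 1 contains the essential gap, and the argument you give for it does not close it. The classification you invoke is false as stated: a two-dimensional area-minimizing cone in $\rt$ whose boundary is the multiplicity-one line $[[\Gamma]]$ need \emph{not} be a multiplicity-one half-plane. For example, if $H$ is a half-plane with $\partial[[H]] = [[\Gamma]]$ and $P$ is the plane containing $H$, then $[[H]] + [[P]]$ (with compatible orientations) is calibrated by a constant $2$-form, hence area-minimizing, has boundary exactly $[[\Gamma]]$ with multiplicity one (the plane contributes no boundary and no cancellation), and has density $3/2$ at $\ze$. Your proposed fix --- that ``closed great-circle components would be removable without changing $\partial C$ and thus destroy minimality'' --- does not work, because removing a plane is not an admissible comparison: the competitors in Theorem \ref{cone.exist}(1) are $C+X$ with $\spt X$ compact and $\partial X = 0$, and $-[[P\cap B_R]]$ has nonzero boundary while $-[[P]]$ is not compactly supported. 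Likewise, the alternative bound $\Theta(\mu_T,\ze)\leq \tfrac14$ ``forced by $\spt T\subset\overline{B_3^{+}}$'' is unsubstantiated: the monotonicity of Lemma \ref{dens} only bounds the density by a mass ratio at a fixed scale, and extra sheets of $T$ with free boundary on $S$, lying on one side of $M$ and passing through $\ze$, are not excluded by the one-sided support condition alone. In short, soft cone-classification arguments cannot rule out tangent cones of density $k+\tfrac12$ with $k\geq 1$; consequently Allard/Bourni-type boundary regularity, which needs the density-$\tfrac12$ (or one-sided) hypothesis, cannot yet be applied.

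The paper closes exactly this gap by a different mechanism. Following \cite{Ed}, it decomposes $\ti - [[\si]]$ into nested boundaries of sets, $\sum_j\partial[[E_j]]$, with additivity of mass, so that each piece is separately almost-minimizing and automatically of multiplicity one; the tangent cone of the boundary piece is then a multiplicity-one half-plane and that of every other piece a multiplicity-one plane, and \cite{DS} gives $C^{1,\alpha}$ regularity of each sheet. The remaining --- and genuinely geometric --- step is to show the extra sheets $\partial E_j$, $j\neq j_*$, are empty near $\ze$: each such sheet is a minimal graph tangent to the mean convex surface $M$ at $\ze$ and lying on one side of it, and a Hopf-type boundary point lemma (over a carefully constructed $C^{1,\alpha}$ domain $\Omega$) yields a contradiction. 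This is where the mean convexity of $M$ and the inclusion $\spt T\subset\overline{B_3^{+}}$ enter; your proposal identifies them as relevant but never actually uses them. Without this step one obtains at best that $\spt\ti$ near $\ze$ is a finite union of $C^{1,\alpha}$ sheets tangent at $\ze$, not a single embedded surface with boundary.
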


\begin{proof}
Since $M$ meets $S$ orthogonally, the set $\widetilde{M} = \bigl(M\cap B_3(\ze)\bigr)\cup\Phi\bigl(M\cap B_3(\ze)\bigr)$ is a $C^{1,1}$-surface; hence, it satisfies an exterior ball condition at $\ze$ (there exists a ball $B_r(y) \subset \mathbb{R}^3\setminus W\cup\Phi(W)$ which is tangent to $\widetilde M$ at $\ze$). Also, $\spt\widetilde{T} \subset \bigl(W\cap B_3(\ze)\bigr)\cup\Phi\bigl(W\cap B_3(\ze)\bigr)$. From Proposition $3$ and equation \eqref{ineq.mass}, there is a constant $c$ such that
\begin{equation}\label{almostminimizing}
{\bf M}_{U}(\widetilde T)\leq (1+cr){\bf M}_{U}(\widetilde T + X),
\end{equation}
for any $U \subset\subset B_{R}(x) \subset B_3(\ze)$, and any $X \in \de 2(\rt)$ such that $\spt X \subset U$ and $\partial X = 0$.

Now we can argue as in ~\cite[Lemma 1.1]{Ed} (we will only sketch the arguments and refer the reader to \cite{Ed} for the details). Denote $\mathcal{W} = \mathbb{R}^3\setminus W\cup\Phi(W)$. One can construct a (topological) surface with boundary, denoted by $\si$, which is contained in the closure of $\mathcal{W}\cap B_3(\ze)$ and such that $\partial \si = \spt(\partial\widetilde{T}\mres B_3(\ze))$. Observe that $\partial\left(\widetilde{T} - [[\si]]\right)\mres B_3(\ze) = 0$. Then, using the decomposition of codimension one currents ~\cite[Theorem 27.6]{Sim} and arguing as in \cite{Ed}, we find a family of open sets $\{E_{j}\}_{j\in \mathbb{Z}}$, where $E_{j+1} \subset E_{j} \subset B_3(\ze)$, $\forall j \in \mathbb{Z}$, and such that for some $j_*$ we have 
\begin{eqnarray}
\widetilde{T} - [[\si]] = \sum_{j=-\infty}^{\infty} \partial[[E_j]], \ \ 
{\bf M}(\widetilde{T}) = {\bf M}\bigl(\partial[[E_{j_*}]] + [[\si]]\bigr) + \sum_{j \neq j_*}{\bf M}\bigl(\partial[[E_j]]\bigr),\label{eq:minimal}\\
\partial[[E_{j_*}]]\mres \mathcal{W} = - [[\si]], \ \ \partial[[E_j]] \mres \mathcal{W} = 0, j \neq j_*,\label{eq:ext.dom}
\end{eqnarray}
and $\partial[[E_{j_*}]] + [[\si]]$ and each $\partial[[E_{j\neq j_*}]]$ satisfy an almost-minimizing property like \eqref{almostminimizing}. 

Consider $\partial[[E_j]]$, $j\neq j_{*}$, such that its support contains $\ze$ and denote by $C_j$ its tangent cone at $\ze$. Also, denote by $C_{j_*}$ the tangent cone of $\partial[[E_{j_*}]] + [[\si]]$ at $\ze$. The existence of the cones and the fact that each $C_j$ is area-minimizing follows as in Theorem \ref{cone.exist}. Also, there exist open sets $F_j$, a half-space $\mathcal{H}$ and a half-plane $H \subset \mathcal{H}$ such that $C_{j_*} = \partial[[F_{j_*}]] + [[H]]$, $\spt C_{j_{*}} \subset \mathcal{H}$, $C_{j} = \partial[[F_j]]$ and $F_j \subset \mathcal{H}$ for $j\neq j_*$. Thus $C_{j_*}$ is a multiplicity one half-plane and each $C_j$ must be a multiplicity one plane for $j \neq j_*$. In particular, $\Theta\left(C_{j_*},\ze\right) = 1/2$ and $\Theta\left(C_{j},\ze\right) = 1$, for $j \neq j_*$. Since $\gamma\cup\Phi(\gamma)$ is a $C^{1,1}$-curve, we can use the results of \cite{DS} to conclude that $\partial[[E_{j_*}]] + [[\si]]$ and each $\partial[[E_{j\neq j_*}]]$ are $C^{1,\alpha}$-surfaces, for $0 < \alpha < 1$, in some neighborhood of $\ze$.

By Proposition \ref{max.princ}, either $\spt\widetilde{T} \subset M\cup\Phi(M)$ or $\spt\tilde{T}\cap (M\cup\Phi(M)) = \gamma\cup\Phi(\gamma)$. In the first case we already have regularity, so suppose the second case happens. Our final step is to show that each $\partial E_{j\neq j_*}$ is actually an empty set. Fix $j,$ for $ j\neq j_*$. By (\ref{eq:ext.dom}), we know that $\partial E_j$ is on one side of $M$. Since $C_j$ is a multiplicity one plane, then it is necessarily the tangent plane to $M$ at $\ze$ (otherwise there would be points of $\partial E_j$ in $\mathcal W$). Hence, since $M$ and $\p E_j\cap W$ are tangent at $\ze$, they can be written locally as graphs over closed regions ${\Omega}_1$ and ${\Omega}_2$ of their tangent plane $T_{\ze}M$ (with possibly ${\Omega}_1\neq {\Omega}_2$); that is, there exists a closed neighborhood $U$ of $\ze$ in $B_3(\ze)\cap W$ such that $M\cap U=\mbox{Graph}(u_1)$ and $\partial E_j\cap U=\mbox{Graph}(u_2)$, where $u_1:\Omega_1\to \mathbb{R}$ and $u_2:\Omega_2\to \mathbb{R}$ with $u_1(0,0)=u_2(0,0)=0$. Observe that since $\partial E_j$ is tangent to $M$ at $\ze$ and has no boundary, the point $\ze$ is not a corner for $\p E_j\cap W$, just a point in its boundary.

Let $\gamma_1$ and $\gamma_2$ be two arcs of the curves $M\cap S$ and $\partial E_j\cap S,$ respectively, that contain the origin $\ze$. We can see the arc $\gamma_i$ as the graph over a curve $\alpha_i$ in $\partial \Omega_i, i=1,2.$ Furthermore, as everything is local, we can see $\alpha_i$ as the graph of a $C^{1, \alpha}$ function $f_i:I\subset \rr\to\rr$ with $f_i(0)=0, i=1,2.$ Hence, we have $\gamma_i(t)=(t, f_i(t), u_i(t,f_i(t))), t\in I,$ and we can assume that $\Omega_i\subset \{(x,y); x\in I \ \mbox{and} \ y\leq f_i(x)\}$ (see Figure \ref{fig-omega}). Observe that since $M$ is tangent to $\p E_j$ at $\ze$ and transversal to $\s^2$, the curves $\gamma_1$ and $\gamma_2$ are tangent at $\ze;$ in particular, we can assume that $f_1'(0)=f_2'(0)=0.$

\begin{figure}[!h]
\includegraphics[scale=1.3]{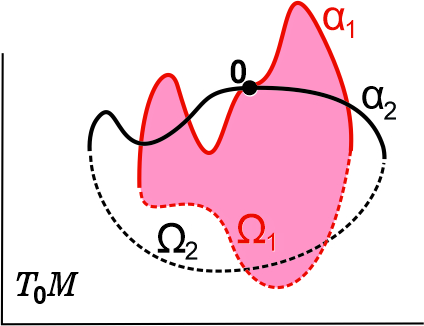}
\caption{Closed regions $\Omega_1$ and $\Omega_2$.}
\label{fig-omega}
\end{figure}

Since $f_i$ is $C^{1,\alpha},$ there exists a constant $C_i>0$ such that 
$$
|f_i'(t)-f_i'(0)|\leq C_it^{\alpha},
$$
which implies
$$
f_i'(t)\geq -C_it^\alpha.
$$
Hence,
$$
f_i(x)=f_i(0)+\int_0^x f_i'(t)dt \geq -C_ix^{\alpha+1}.
$$
Taking $C=\mbox{max}\{C_1,C_2\}$, we get 
$$
f_i(x)\geq -Cx^{\alpha+1}, \ \ i=1,2.
$$
Call $h(x)=-Cx^{\alpha+1}$. We have that $h$ is a $C^{1,\alpha}$ function with $h(0)=0, h'(0)=0$ and $\mbox{min}\{f_1(x),f_2(x)\}\geq h(x).$ Therefore, we can find a domain $\Omega\subset \Omega_1\cap \Omega_2$  containing the origin $\ze$ with $C^{1,\alpha}$ boundary which contains the graph of $h$ (see Figure \ref{fig-omega2}). In particular, $\partial \Omega$ has a Dini continuous normal and a Hopf-type lemma can be applied to $\Omega$ (see, for instance, ~\cite[Theorem 1.3]{Sa}, or the notes at pg. 46 of \cite{GT}). Since $M$ is mean convex, $\partial E_j\cap W$ is minimal (by the regularity of the original current $T$, Theorem \ref{thm.reg1}) and $\partial E_j\cap W$ is on one side of $M$, we can apply the Hopf Lemma to $(u_1 - u_2)|_{\Omega}$ and conclude that its normal derivative at the origin has a strict sign. However, this is not true once $M$ and $\partial E_j$ are tangent at $\ze.$ Thus $\partial E_j$ is empty.

\begin{figure}[!h]
\includegraphics[scale=1.3]{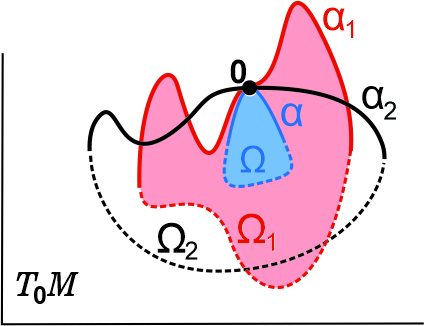}
\caption{Here $\alpha$ represents the graph of $h$.}
\label{fig-omega2}
\end{figure}
\end{proof}

Therefore, combining all the regularity results from Sections \ref{reg1} and \ref{reg2}, we get the following theorem.

\begin{theorem}
\label{thm.last}
Let $T$ be a solution of \eqref{var.prob}, where $\gamma$ is a $C^2$ curve. Then $T$ is supported in a connected oriented embedded minimal surface, which meets $S$ orthogonally along $\spt\bigl([[\gamma]] - \p T\bigr)$ and it is $C^{1,\alpha}$, $0<\alpha<1$, in a neighborhood of $\gamma\cap S$ and $C^2$ away from $\gamma\cap S$.
\end{theorem}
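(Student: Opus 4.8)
The statement is the synthesis of the two local regularity theorems of this section, so the plan is to assemble them and then globalize. Away from the finite corner set $\gamma\cap S$, Theorem \ref{thm.reg1} already shows that a solution $T$ of \eqref{var.prob} is supported in a connected oriented embedded minimal $C^2$-surface meeting $S$ orthogonally along $\spt\bigl([[\gamma]]-\p T\bigr)$; what remains is to describe $\spt T$ near each corner and to check that the local pictures patch into one embedded surface with the claimed regularity.

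First I would fix a corner $p\in\gamma\cap S$ and run the normalization preceding Theorem \ref{thm.reg2}: after a translation and a dilation, assume $p=\ze$ and that $\p W=M\cup S$ splits $B_3(\ze)$ into the cells $B_3^{\pm}$ of \eqref{cells} with $\spt T\subset\overline{B_3^{+}}$, and then form the reflected current $\ti=(T-\Phi_{\#}T)\mres B$ across the extended surface $\widetilde S$. By Theorem \ref{thm.reg2}, in a neighborhood $U$ of $\ze$ the set $U\cap\spt\ti$ is an embedded oriented $C^{1,\alpha}$-surface with boundary, $0<\alpha<1$, the boundary being, near $\ze$, the $C^{1,1}$-curve $\gamma\cup\Phi(\gamma)$. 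Since $\spt T\subset\overline{B_3^{+}}$ in this picture, $\spt T$ coincides near $p$ with $\spt\ti\cap\overline{B_3^{+}}$, that is, with the part of the $\Phi$-invariant surface $\spt\ti$ on one side of $\widetilde S$; hence near $p$ it is an embedded oriented $C^{1,\alpha}$-surface whose boundary consists of the fixed arc of $\gamma$ through $p$ and the free arc $\spt\ti\cap S$, the latter met orthogonally because the $\Phi$-invariant surface $\spt\ti$ is $C^1$ across $\widetilde S$. This free-boundary orthogonality is $C^1$-consistent up to $p$ with the one provided by Theorem \ref{thm.reg1} away from the corners.

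Next I would glue. Away from $\gamma\cap S$, $\spt T$ is the connected embedded minimal $C^2$-surface of Theorem \ref{thm.reg1}; on a neighborhood of each corner it is the embedded $C^{1,\alpha}$-surface just produced; on the overlaps each of these is an embedded submanifold which equals $\spt T$ as a set, so they agree there. Therefore $\si=\spt T$ is a single embedded oriented surface with boundary $\spt\bigl([[\gamma]]-\p T\bigr)\subset S$, minimal, orthogonal to $S$ along its free boundary, of class $C^2$ on $\si\setminus(\gamma\cap S)$ and of class $C^{1,\alpha}$ in a neighborhood of each point of $\gamma\cap S$. Connectedness is inherited from Theorem \ref{thm.reg1} (deleting and then restoring the finitely many corner points of a connected surface keeps it connected), and the orientation is the one carried by the integer multiplicity rectifiable current $T$ on the embedded set $\si$. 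This proves Theorem \ref{thm.last}.

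I expect the genuinely delicate part to lie entirely inside Theorem \ref{thm.reg2}: the reflection device, the resulting almost-minimizing inequality \eqref{almostminimizing}, the existence of an area-minimizing tangent half-plane via Theorem \ref{cone.exist}, and the Hopf-lemma argument eliminating the spurious sheets $\p[[E_j]]$, $j\neq j_*$. At the assembling stage the only point needing a little care is that $\spt T$ carries $T$ with locally constant multiplicity, so that the $C^2$ interior piece and the $C^{1,\alpha}$ corner pieces are genuinely the same embedded set where they overlap; this is guaranteed by the density and current-decomposition facts already extracted in the proofs of Theorems \ref{thm.reg1}, \ref{cone.exist} and \ref{thm.reg2}.
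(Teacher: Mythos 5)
Your proposal is correct and follows essentially the same route as the paper, which obtains Theorem \ref{thm.last} simply by combining Theorem \ref{thm.reg1} (interior, fixed-boundary and free-boundary regularity away from the corners) with Theorem \ref{thm.reg2} (the $C^{1,\alpha}$ description near each corner via the reflected current $\ti$). Your gluing discussion and the observation that $\spt T$ near a corner is $\spt\ti\cap\overline{B_3^{+}}$ are exactly the (implicit) content of the paper's one-line synthesis.
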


\begin{remark}
\label{last}
If $\gamma$ has isolated singularities (for example, $n$-prong singularities) at the interior or the boundary, the same regularity result holds true for points away from these singularities.
\end{remark}

\appendix

\section{A maximum principle at a corner}\label{app}

%We begin by recalling the following result due to Serrin.

%\begin{proposition}[\cite{Ser}, Lemma 2 ]\label{Serrin}
%Let $\Omega \subset \mathbb{R}^2$ be a domain with $C^2$ boundary, and let $\Gamma$ be a line containing the normal do $\partial\Omega$ at $p \in \partial\Omega$. Denote by $U$ the portion of $\Omega$ lying in some particular side of $\Gamma$. Suppose that $u$ is of class $C^2$ in the closure of $\Omega$ and satisfies
%$$Lu = a^{ij}(x_1,x_2)u_{x_ix_j} + b^{i}(x_1,x_2)u_{x_i} \leq 0,$$
%where the coefficients $a^{ij}$, $b^i$ are uniformly bounded and
%\begin{align*}
%a^{ij}(x_1,x_2)\xi_i\xi_j &\geq \kappa|\xi|^2, \ \forall \ (x_1,x_2) \in \Omega, \ \forall \ \xi \in \mathbb{R}^2,\\
%|a^{ij}(x_1,x_2)\xi_i\zeta_j| &\leq K\bigl(|\langle\xi,\zeta\rangle| + |\xi|d(x_1,x_2)\bigr),  \ \forall \ (x_1,x_2) \in \Omega, \ \forall \ \xi \in \mathbb{R}^2,
%\end{align*}
%where $\kappa$ and $K$ are positive constants, $\zeta$ is a unit normal to $\Gamma$, and $d$ is the distance to $\Gamma$.

%Suppose also that $u \geq 0$ in $U$ and $u(p) = 0$. Then either $u \equiv 0$ or
%\begin{equation}\label{hess}
%\frac{\partial u}{\partial v}(p) > 0 \ \ \textrm{or} \ \ \hess u_{p}(v,v) > 0,
%\end{equation}
%for any direction $v$ at $p$ which enters $U$ non tangentially.
%\end{proposition}

Here we are going to show a kind of a maximum principle at a corner that we use in the proof of Theorem \ref{thm-nul}. 

As in Section \ref{sec.main}, given a region $W\subset \ba^3$, we will denote by $\partial W$ the closure of the part of the boundary of $W$ contained in int$\ba^3$.  

\begin{theorem}
\label{thm.corner}
Let $W\subset \ba^3$ be a connected closed region with (non-strictly) mean convex boundary  such that $\partial W$ meets $\s^2$ orthogonally along its boundary and $\partial W$ is smooth. Suppose there is a closed half-disk $D^{+}$ contained in $W$ and denote by $p$ one of its corners. If $\partial W$ is tangent to $D^+$ at $p,$ then $W$ is necessarily a halfball. 
\end{theorem}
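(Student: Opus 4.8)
The plan is to reduce the statement to a Hopf-type maximum principle at the corner $p$, exactly the situation treated in the proof of Theorem \ref{thm.reg2}. First I would set up coordinates: since $D^{+}$ is a half-disk lying in a plane $P$ through the origin, and $\partial W$ is tangent to $D^{+}$ at the corner $p\in\s^2$, I would place $p$ at a convenient point and note that $\partial W$ meets $\s^2$ orthogonally while $D^{+}$ (being a piece of an equatorial disk) also meets $\s^2$ orthogonally. Thus near $p$ both $\partial W$ and $D^{+}$ are surfaces-with-boundary meeting $\s^2$ orthogonally, tangent to each other at the corner $p$, and the free boundaries of both are tangent curves in $\s^2$ through $p$. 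The orthogonality of $D^{+}$ and $\partial W$ to $\s^2$ forces these boundary curves to be tangent to order matching the $C^{1,\alpha}$ (indeed here $C^\infty$) regularity, so after reflecting across $\s^2$ one gets two smooth surfaces tangent at an interior point of the reflected configuration — but it is cleaner to work directly with a Hopf lemma on a domain with Dini-continuous normal, as in Theorem \ref{thm.reg2}.

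The key steps, in order, are: (1) Observe that $D^{+}$ is minimal (it is flat, being part of an equatorial disk) and that $\partial W$ is mean convex, with $D^{+}$ lying on one side of $\partial W$ locally near $p$ (this is the content of "$\partial W$ is tangent to $D^{+}$ at $p$", together with $D^{+}\subset W$). (2) Write both $\partial W$ and $D^{+}$ locally as graphs $u_1, u_2$ over closed regions $\Omega_1,\Omega_2$ of their common tangent plane $T_pD^{+}=T_p\partial W$, with $u_1(0)=u_2(0)=0$ and $\nabla u_1(0)=\nabla u_2(0)=0$; because the free boundary curves of $\partial W$ and $D^{+}$ are smooth and tangent at $p$, the boundary arcs $\alpha_1,\alpha_2$ of $\Omega_1,\Omega_2$ are graphs of $C^{1}$ (here $C^\infty$) functions agreeing to first order at the origin, so exactly as in the proof of Theorem \ref{thm.reg2} one finds a domain $\Omega\subset\Omega_1\cap\Omega_2$ through the origin with Dini-continuous (indeed smooth) boundary normal contained in both. (3) Apply the Hopf-type lemma at a boundary corner (as cited in the proof of Theorem \ref{thm.reg2}, e.g. \cite[Theorem 1.3]{Sa} or the notes on pg. 46 of \cite{GT}, and for the corner point itself the Serrin maximum principle \cite{Ser} as developed in the rest of this appendix) to the difference $u_1-u_2$ on $\Omega$: mean convexity of $\partial W$ and minimality of $D^{+}$ make $u_1-u_2$ a supersolution of an elliptic equation, it is $\geq 0$ and vanishes at the origin to first order, so the strong maximum principle / Hopf lemma forces $u_1\equiv u_2$ near $p$, i.e. $\partial W$ coincides with the plane $P$ in a neighborhood of $p$. (4) Finally, propagate this: the set where $\partial W$ agrees with $P$ is open (by the same local argument applied at any such point, now an interior tangency or boundary tangency handled by the interior, boundary, or corner maximum principle) and closed in $\partial W$, hence $\partial W\subset P$; since $W$ is connected with $\partial W$ a piece of the equatorial disk $D=P\cap\ba^3$ meeting $\s^2$ orthogonally, $W$ must be one of the two closed halfballs bounded by $D$.

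The main obstacle I anticipate is step (3) at the corner point $p$ itself, where the relevant domain $\Omega$ has a genuine corner on $\s^2$ rather than a smooth boundary point, so the classical Hopf lemma does not directly apply; this is precisely why the appendix develops a version of Serrin's maximum principle at a corner (Lemma 2 of \cite{Ser}), and I would invoke that result here, checking that the wedge opening of $\Omega$ at $p$ and the Dini continuity of the boundary normal away from $p$ are as required. A secondary technical point is verifying that "$\partial W$ tangent to $D^{+}$ at $p$" genuinely forces one-sidedness of $D^+$ with respect to $\partial W$ locally — this uses that $D^{+}\subset W$ and that $\partial W$ is embedded and smooth, so a small perturbation argument (as in the rotation argument in the proof of Theorem \ref{thm-nul}) rules out $D^{+}$ crossing $\partial W$ transversally near $p$.
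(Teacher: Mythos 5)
Your overall strategy (graph $\partial W$ over the tangent plane at $p$, use mean convexity to get a differential inequality, and close with a boundary-point maximum principle) matches the paper's, but there is a genuine gap exactly at the step you flag as the "main obstacle," and naming Serrin's lemma does not by itself fill it. At the corner $p$ the projected domain $\widetilde\Omega$ has a genuine wedge, and Serrin's Lemma 2 gives only the dichotomy: either $u\equiv 0$, or $\partial u/\partial v>0$ \emph{or} $\hess u_{(0,0)}(v,v)>0$ for directions $v$ entering the domain. The tangency hypothesis kills the first derivative but is perfectly consistent with the second alternative (a strictly positive second derivative in entering directions), so you cannot conclude $u_1\equiv u_2$ from tangency alone the way one does with the classical first-order Hopf lemma. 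This is precisely why the situation here differs from the one in Theorem \ref{thm.reg2}: there the paper explicitly observes that after reflection the tangency point is \emph{not} a corner of $\partial E_j$, so the first-order Hopf lemma on a Dini domain suffices; here the corner is unavoidable and first-order information is not enough.

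The missing content is the argument that the full Hessian of $u$ vanishes at the origin, contradicting Serrin's second alternative. In the paper this takes two separate inputs that your proposal never uses at this stage: (i) continuity of the Serrin inequality up to the boundary directions $\partial_x,\partial_y$ gives $u_{xx}(0,0),u_{yy}(0,0)\geq 0$, while the trace of the mean curvature inequality $Lu\leq 0$ evaluated at the origin forces $u_{xx}(0,0)=u_{yy}(0,0)=0$; and (ii) the orthogonality of $\partial W$ with $\s^2$, written as $\eta_1u_x+\eta_2u_y=u$ along $\partial\Omega$ and differentiated along the boundary curve, yields $u_{xy}(0,0)=0$. Only then is $\hess u_{(0,0)}\equiv 0$, contradicting Serrin and forcing $u\equiv 0$. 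Your write-up mentions the free boundary condition only as part of the coordinate setup and never differentiates it, so the mixed second derivative is never controlled and the contradiction never materializes. (Your step (4), the open-closed propagation, is then superfluous: once $u\equiv 0$ on the domain of the Serrin alternative, $\partial W$ lies in $P$ and $W$ is a halfball directly.)
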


\begin{proof}
Let $P$ be the plane containing $D^{+}$. We are assuming that $\partial W$ and $D^+$ are tangent at $p.$ Observe that, by the classical (interior or free boundary version of the) maximum principle, we already know that either $W$ is a halfball or $\partial W\cap D^+$ is contained in the diameter of $D^+$ (see the proof of Theorem \ref{thm-nul} for more details). 

We can choose coordinates $(x,y,z)$ so that $\partial_x$ is tangent to the circumference at $p$ in $D^+$ and belongs to the halfplane containing $D^+$, $\partial_y$ is in the direction of its diameter and coincides with the outward conormal vector to $D^{+}$, and $\partial_z$ is the normal vector to $P$ pointing towards $\partial W$ at the points $B_\delta(p)\cap D^+$.  

Since $\partial W$ is locally on one side of $D^+$, if we define $M$ as $\partial W\cap B_{\delta}(p),$ we can take $\delta>0$ sufficiently small so that $M$ can be written as a graph over a region in $P$ containing $p$. Consider $\pi: \rr^3\to P$ the orthogonal projection onto $P.$ We are choosing the unit normal vector $n$ to $P$ pointing into $M,$ that is, $n=\partial_z$. Denote by $\Omega=\pi (M).$ We have $(0,0)\in \partial \Omega$ and we can write $M=\mbox{graph}(u:\Omega\to \rr)$ with $u(0,0)=0$ and $\nabla u(0,0)=(0,0)$, since they are tangent at $p$.

If $\gamma: I\to \partial \Omega$ is a parametrization of the boundary of $\Omega$, then $\beta(t)=\bigl(\gamma(t), u(\gamma(t))\bigr)$ parametrizes $M\cap \s^2.$ We have 
$$
\beta'(0)=(\gamma'(0), 0),
$$
in particular, $\beta'(0)\in P$ and is tangent to $\s^2$. Thus, up to a change of orientation, $\beta'(0)=\partial_x$ and consequently, $\gamma'(0)=\partial_x,$ that is, $\partial \Omega$ is tangent to $D^+$ at the origin (which is identified with $p$ by our choice of coordinates).

%Let $H\geq 0$ denote the mean curvature of $\partial W$ with respect to the normal $N = \frac{1}{\left(1 + |\nabla u|^2\right)^{1/2}}(u_x,u_y,-1)$, then $u$ satisfies the mean curvature equation

Since $\partial W$ is mean convex and $D^+$ is contained in its mean convex side, we know its mean curvature satisfies $H\geq0$ for the choice of normal $N = \frac{1}{\left(1 + |\nabla u|^2\right)^{1/2}}(u_x,u_y,-1)$. Moreover, $u$ satisfies the mean curvature equation
\begin{equation}\label{MCG}
Lu := \left(1 + u_{y}^2\right)u_{xx} -2u_{x}u_{y}u_{xy} + \left(1 + u_{x}^2\right)u_{yy} = -2\left(1 + |\nabla u|^2\right)^{3/2}H \leq 0.
\end{equation}

%Defining 
%\begin{equation*}
%\bigl(a^{ij}(x,y)\bigr) = \left(\begin{array}{cc}
%1 + u_{y}^2 & -u_{x}u_{y} \\
%-u_{x}u_{y} & 1 + u_{x}^2
%\end{array} \right),
%\end{equation*}
%we have that 
%$$a^{ij}(x,y)\xi_i\xi_j \geq |\xi|^2, \ \forall \ (x,y) \in \Omega, \ \forall \ \xi \in \mathbb{R}^2.$$
%Also, $\zeta = (1,0)$ is a unit normal to $\Gamma = y$-axis and $d = \dis(\cdot,\Gamma) = |x|$. So along $\Gamma\cap\Omega$ we have
%$$a^{ij}(0,y)\xi_i\zeta_j \leq (1 + 2C^2)\langle\xi,\zeta\rangle = (1 + 2C^2)\langle\xi,\zeta\rangle + |\xi|d(0,y),$$
%thus, by continuity there is $K > 0$ such that
%$$a^{ij}(x,y)\xi_i\zeta_j \leq K\bigl(\langle\xi,\zeta\rangle + |\xi|x\bigr), \ \ (x,y) \in \Omega, \ \forall \ \xi \in \mathbb{R}^2.$$

Observe that since $\Omega$ is tangent to $D^+$ at the origin, we know $\Omega$ contains a segment of the diameter starting at $p$; hence, choosing a smaller $\Omega$ if necessary, we can assume that $\Omega\cap \{y\mbox{-axis}\}=\{(0, t, 0), -\epsilon < t\leq 0\}$. Let us consider the region $\widetilde{\Omega}=\Omega\cap \{x\geq 0\}$. Since $\pi(\ba^3)$ is exactly the disk containing $D^+,$ we know that $\widetilde\Omega\subset D^+.$ Denote by $\alpha$ the component of the boundary of $\widetilde\Omega$ contained in the $y$-axis. By our choice of coordinates we know that $u\geq 0$ on $\alpha$ and, since we are assuming that $M\cap D^+$ is contained in the diameter of $D^+$, we have $u>0$ on $\widetilde\Omega\cap \{x>0\}$.

Hence, we can apply Serrin's Maximum Principle (see Lemma 2 in \cite{Ser}) to conclude that, since $\nabla u(0,0)=(0,0)$, either $u\equiv 0$ or $\hess u_{(0,0)} (v,v)>0$, for every vector $v$ pointing strictly into $\widetilde\Omega$. (Observe that the operator $L$ defined in \eqref{MCG} satisfies the hypotheses of \cite[Lemma 2]{Ser}).

If $u\equiv0,$ we conclude that $W$ is a halfball and then $M\subset P$, which contradicts our assumption that $M\cap D^+$ is contained in the diameter of $D^+$. Suppose the second option happens, that is, 
 \begin{equation}\label{hess}
\hess u_{(0,0)} (v,v)>0,
 \end{equation}
for any vector $v=(v_1,v_2)$ with coordinates $v_1>0$ and $v_2<0.$ Then, by continuity, we get that
\begin{equation*}
u_{xx}(0,0) = \hess u_{(0,0)}(\p_x,\p_x) \geq 0 \ \ \textrm{and} \ \ u_{yy}(0,0) = \hess u_{(0,0)}(\p_y,\p_y) \geq 0.
\end{equation*}

On the other hand, equation \eqref{MCG} evaluated at $(0,0)$ gives us
\begin{equation*}
u_{xx}(0,0) + u_{yy}(0,0) \leq 0,
\end{equation*}
therefore,
\begin{equation}\label{sec.der}
u_{xx}(0,0) = u_{yy}(0,0) = 0.
\end{equation}

Denote by $\eta$ the outward conormal vector to $\partial M$. Since $M$ meets $\s^2$ orthogonally, $\eta$ coincides with the position vector $\widetilde{\eta}$ along the boundary of $M.$ Since for the $z$-coordinate we know that $\displaystyle\frac{\partial z}{\partial \widetilde\eta}=z;$ restricting to $\partial \Omega,$ we obtain
\begin{equation}\label{Nor.der}
\eta_1u_x+\eta_2u_y=u \ \ \textrm{along} \ \ \partial \Omega,
\end{equation}
where $\eta=(\eta_1,\eta_2)$ in graph coordinates. Since $\partial\Omega$ is tangent to the $x$-axis at $p,$ we can write $\partial\Omega$ in a neighborhood of $p$ as a graph $\bigl(x,h(x)\bigr)$, for some function $h: (-\epsilon,\epsilon) \to \mathbb{R}$. In particular, $h(0)=0$ and $h'(0)=0.$ Hence, we can rewrite \eqref{Nor.der} as
\begin{equation*}
\eta_1\bigl(x,h(x)\bigr)u_x\bigl(x,h(x)\bigr) + \eta_2\bigl(x,h(x)\bigr)u_y\bigl(x,h(x)\bigr) = u\bigl(x,h(x)\bigr), \ \forall \ x\in (-\epsilon,\epsilon).
\end{equation*}

Differentiating the previous equation with respect to $x$ we obtain
\begin{align*}
& \frac{d}{dx}\left[\eta_1\bigl(x,h(x)\bigr)\right]u_x + \eta_1\left[u_{xx} + u_{xy}h'(x)\right] + \frac{d}{dx}\left[\eta_2\bigl(x,h(x)\bigr)\right]u_y + \eta_2\left[u_{xy} + u_{yy}h'(x)\right]\\
&= u_{x} + u_{y}h'(x).
\end{align*}
Thus, when we evaluate at $x = 0$, using \eqref{sec.der} and the facts that $\nabla u(0,0)= (0,0)$ and $\eta(0,0) = (0,1)$, we conclude that $u_{xy}(0,0) = 0$. However, this together with \eqref{sec.der} implies that $\hess u_{(0,0)} \equiv 0$, which contradicts \eqref{hess}.

Therefore, we have $u \equiv 0$ which implies that $M$ is contained in the plane $P$ and, consequently, $W$ is necessarily a halfball.
\end{proof}

\end{document}